\appto{\bibsetup}{\emergencystretch=0.5em}
\newcommand{\N}{\mathds N}
\newcommand{\Z}{\mathds Z}
\newcommand{\C}{\mathds C}
\newcommand{\id}{\mathds{1}}
\newcommand{\kk}{\mathds{k}}
\newcommand{\End}{\operatorname{End}}
\let\oldbigwedge\bigwedge
\renewcommand{\bigwedge}{\oldbigwedge\nolimits}
\newcommand{\chuse}[2]{\left[\begin{array}{c} #1 \\ #2\end{array}\right]}
\newtheorem{thm}{Theorem}[section]
\newtheorem{lem}[thm]{Lemma}
\newtheorem{cor}[thm]{Corollary}
\theoremstyle{definition}\newtheorem*{definition}{Definition}\newtheorem{remark}[thm]{Remark}
\newtheorem{example}[thm]{Example}
\newtheorem*{acknowledgement}{Acknowledgement}
\newtheorem*{thm*}{Theorem}
\newtheorem*{conjecture}{Conjecture}
\numberwithin{equation}{subsection}
\begin{document}
\title[A categorification of the skew Howe action on $U_q(\mathfrak{gl}(m|n))$]{A categorification of the skew Howe action on a representation category of $U_q(\mathfrak{gl}(m|n))$}
\author{Jonathan Grant}
\address{Dept of Mathematical Sciences\\ Durham University\\ Science Laboratories\\ South Rd.\\ Durham\\ DH1 3LE\\ UK}
\email{jonathan.grant@durham.ac.uk}

\begin{abstract}
Using quantum skew-Howe duality, we study the category $\operatorname{Rep}(\mathfrak{gl}(m|n))$ of tensor products of exterior powers of the standard representation of $U_q(\mathfrak{gl}(m|n))$, and prove that it is equivalent to a category of ladder diagrams modulo one extra family of relations. We then construct a categorification of $\operatorname{Rep}(\mathfrak{gl}(m|n))$ using the theory of foams. In the case of $n=0$, we show that we can recover $\mathfrak{sl}(m)$ foams introduced by Queffelec and Rose to define Khovanov-Rozansky $\mathfrak{sl}(m)$ link homology. We also define a categorification of the monoidal category of symmetric powers of the standard representation of $U_q(\mathfrak{gl}(n))$, since this category can be identified with $\operatorname{Rep}(\mathfrak{gl}(0|n))$. The relations on our foams are non-local, since the number of dots that can appear on a facet depends on the position of the dot in the foam, rather than just on its colouring. This may be related to Gilmore's non-local relations in Heegaard Floer knot homology.
\end{abstract}

\maketitle

\section{Introduction}
Categorified quantum groups were first defined by Lauda for $\mathfrak{sl}(2)$, and then by Khovanov and Lauda \cite{Khovanov2009,Khovanov2011,Khovanov2010a} and independently by Rouquier \cite{Rouquier2008} for general simple Lie algebras $\mathfrak{g}$. While these have independent algebraic interest, they are also interesting to topologists due to their connections with certain knot homology theories. Webster \cite{Webster2013} used categorified quantum groups to define, for any simple Lie algebra $\mathfrak{g}$, a homology theory for any link coloured with finite-dimensional representations of $\mathfrak{g}$. Moreover, Lauda, Queffelec and Rose \cite{Lauda2012} showed that Khovanov homology could be defined from the categorified quantum groups associated to $\mathfrak{sl}(m)$ over all $m$. Queffelec and Rose \cite{Queffelec} showed in fact that Khovanov-Rozansky homology \cite{Khovanov2004} could be defined from the same categorified quantum groups in a similar way.

One fundamental tool for the work of Lauda, Queffelec and Rose is skew Howe duality, which defines an action of $U_q(\mathfrak{sl}(m))$ on a category of representations of $U_q(\mathfrak{sl}(n))$. This version of skew Howe duality was first proved by Cautis, Kamnitzer and Morrison \cite{Cautis2012}, who used to it show that every morphism between tensor products of exterior powers of the fundamental representation of $U_q(\mathfrak{sl}(n))$ could be described by the graphical calculus of MOY diagrams. In fact, MOY diagrams are shown to arise from ladder diagrams, which describe morphisms in $\dot U_q(\mathfrak{sl}(m))$. The action of $U_q(\mathfrak{sl}(m))$ even defines the braiding on this category of $U_q(\mathfrak{sl}(n))$ representations, making it a very natural setting for studying the knot polynomial associated to exterior powers of the fundamental representation of $\mathfrak{sl}(n)$.

One possible categorification of MOY diagrams is the 2-category of foams. This is a 2-category in which objects are sequences of numbers referring to colourings of the strands, 1-morphisms are certain MOY diagrams, and 2-morphisms are seamed surfaces with a top boundary and bottom boundary that are both MOY diagrams. The foams are required to satisfy certain relations so that they can induce isomorphisms between MOY diagrams categorifying relations between MOY diagrams. Relations between foams for $U_q(\mathfrak{sl}(n))$ were first defined by Bar-Natan \cite{Bar-Natan2004} for $n=2$, Khovanov \cite{Khovanov2003} for $n=3$, and some relations were determined by Mackaay, Sto\v{s}i\`{c} and Vaz \cite{Mackaay2007} using the Kapustin-Li formula for $n\geq 4$. It was not known whether the relations they defined were complete, but they were enough to give an foam-based definition of Khovanov-Rozansky homology.

The main observation of Queffelec and Rose \cite{Queffelec} is that they can define relations on foams by categorifying the action of $\dot U_q(\mathfrak{sl}(m))$ on the category of representations. So foams are defined in such a way that they satisfy the same relations as the 2-morphisms of categorified $U_q(\mathfrak{sl}(m))$. This allows them to define a complete set of relations, and recover Khovanov-Rozansky homology for links and Wu's generalisation  \cite{Wu2009}.

Categorified skew Howe duality also enabled Mackaay and Webster \cite{Mackaay2015} to compare the different constructions of $\mathfrak{sl}(n)$ homology, and determine that they all give rise to isomorphic homology theories.

In this paper, we define an action of $U_q(\mathfrak{gl}(p))$ on the monoidal category $\operatorname{Rep}(m|n)$ of exterior powers of standard representations of the superalgebra $U_q(\mathfrak{gl}(m|n))$. We establish the following

\begin{thm}\label{thm:introthm1}
There is a full functor
\[ \dot U_q(\mathfrak{gl}(p)) \to \operatorname{Rep}(\mathfrak{gl}(m|n)) \]
for each $p\geq 2$.
\end{thm}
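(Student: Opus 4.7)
The plan is to build the functor in direct analogy with the Cautis--Kamnitzer--Morrison skew Howe action in the non-super case, carrying through the Koszul-type sign modifications forced by the super structure of $V$. On objects, I would send a weight $\lambda=(\lambda_1,\dots,\lambda_p)\in\Z^p$ to the tensor product $\bigwedge^{\lambda_1}V\otimes\cdots\otimes\bigwedge^{\lambda_p}V$ in $\operatorname{Rep}(\mathfrak{gl}(m|n))$, where $V$ is the standard representation and $\bigwedge^{k}V$ is declared to be $0$ when $k<0$. On morphisms, $E_i^{(r)}$ and $F_i^{(r)}$ act on the $i$th and $(i{+}1)$st tensor factors by composing the super multiplication $\bigwedge^{a}V\otimes\bigwedge^{b}V\to\bigwedge^{a+b}V$ with the super comultiplication landing on the adjusted exterior powers. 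These are exactly the elementary rungs of ladder diagrams.

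Verifying functoriality comes down to checking the defining relations of $\dot U_q(\mathfrak{gl}(p))$. The weight relations are automatic from the target decomposition by total wedge-degree. The commutator relation $E_iF_i-F_iE_i=[H_i]\id$ is a two-factor identity comparing merge-then-split with split-then-merge, with the $q$-integer arising from an explicit enumeration of ways to shift a vector across a tensor divide. The $q$-Serre relations are a slightly more involved three-factor computation. In each case the content is to track the Koszul signs introduced at every super swap and to confirm that they cancel, so that the $\dot U_q(\mathfrak{gl}(p))$ relations hold without any sign correction on the quantum group side. This is mostly bookkeeping, and I expect it to follow the CKM template closely.

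The hard step is fullness. By construction the image contains every merge/split rung between exterior factors, so fullness is equivalent to the statement that every morphism in $\operatorname{Rep}(\mathfrak{gl}(m|n))$ between tensor products of exterior powers is a linear combination of ladders. I would attack this via a super analogue of quantum Schur--Weyl duality: the endomorphism algebra of $V^{\otimes k}$ is generated by $R$-matrices acting as a super Hecke algebra, and an arbitrary Hom space between two exterior tensor products is obtained by pre- and post-composing with exterior-power projections, both of which are themselves expressible as ladders. It then suffices to check that each $R$-matrix and each exterior-power projector is a ladder in the image, which is a concrete diagrammatic computation. The main subtlety, and the place I expect the technical difficulty to concentrate, is that $\bigwedge V$ is infinite-dimensional once $n\geq 1$, so the ordinary dimension-counting Schur--Weyl argument must be replaced by a weight-graded version in which only finitely many summands contribute to each fixed Hom space; ensuring that the resulting generation statement is valid, and that the ladder presentation indeed realises all these generators, is where the real work of the proof lies.
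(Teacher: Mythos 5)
Your proposal takes a genuinely different route to fullness, which is the heart of the theorem, so it is worth spelling out the comparison. The paper never argues through the Hecke algebra. Instead it proves (Lemma~\ref{lm:flatmodule}) that the quantum module $\bigwedge_q(\C^{m|n}_q\otimes\C^p_q)$ is a flat deformation of the classical one, then imports the classical Cheng--Wang decomposition $\bigwedge(\C^{m|n}\otimes\C^p)\cong\bigoplus_{\mu\in H}V_{m|n}(\mu^t)\otimes V_p(\mu)$ and lifts it to the quantum setting (Lemma~\ref{lm:hookshaped}). This decomposition into irreducibles over $U_q(\mathfrak{gl}(m|n))\otimes U_q(\mathfrak{gl}(p))$ immediately gives the double-commutant statement (Theorem~\ref{th:skewhoweduality}) and, via Theorem~\ref{thm:directsumdecomp}, the identification $\operatorname{End}_{U_q(\mathfrak{gl}(m|n))}\bigl(\bigwedge_q(\C^{m|n}_q\otimes\C^p_q)\bigr)\cong\bigoplus_{\mu\in H}\operatorname{End}_{\C(q)}(V_p(\mu))$. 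Fullness of the functor is then an immediate corollary: the image of $U_q(\mathfrak{gl}(p))$ inside the commutant is automatically everything, because $U_q(\mathfrak{gl}(p))$ surjects onto $\End(V_p(\mu))$ for every $\mu$. Your Schur--Weyl--Hecke route is a defensible alternative: it would show directly that every $U_q(\mathfrak{gl}(m|n))$-equivariant map between exterior tensor products factors through $R$-matrices and $q$-(anti)symmetrizer projections, each of which is realized by a ladder. What the paper's route buys is that it bypasses the explicit diagrammatic identification of $R$-matrices and projectors entirely, and it outsources the hardest representation-theoretic step to a known classical theorem plus a short flatness argument.

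Two concrete cautions about your plan. First, your worry about $\bigwedge_q V$ being infinite-dimensional is slightly misdirected: each individual $\bigwedge^k_q V$ is a finite-dimensional quotient of $V^{\otimes k}$, so every Hom space you need is finite-dimensional; the real issue is only that $\bigwedge^k_q V\neq 0$ for arbitrarily large $k$ when $n\geq 1$, so the collection of weights is unbounded. Second, the step ``each exterior-power projector is a ladder in the image'' is not as innocuous as it sounds: in the quantum super setting the $q$-antisymmetrizer is defined via the eigenspaces of the $R$-matrix (as in Section~\ref{sec:definitions}), and verifying that the resulting idempotent lives in the skew-Howe image of $\dot U_q(\mathfrak{gl}(p))$ is essentially equivalent to the commutant statement you are trying to prove. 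If you pursue this route you will want to be careful not to argue in a circle; the flatness-plus-classical-decomposition argument in the paper is designed precisely to avoid having to make that identification by hand.
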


We use $\mathfrak{gl}(p)$ here instead of $\mathfrak{sl}(p)$ for notational reasons, the difference is not significant. Since we have a functor for each $p$, it makes sense to combine them all into a single functor from $\dot{U}_q(\mathfrak{gl}(\infty))$. The functor also factors through a quotient category $\dot U_q^\infty(\mathfrak{gl}(\infty)$ giving a full functor
\[ \dot U_q^\infty(\mathfrak{gl}(\infty)) \to \operatorname{Rep}(\mathfrak{gl}(m|n)).\]
The algebra $\dot U_q^\infty(\mathfrak{gl}(\infty))$ has a decomposition $\sum_{\mu}\operatorname{End}_{fr}(V_\infty(\mu))$ where $\mu$ ranges over all dominant weights, $V_\infty(\mu)$ is the irreducible highest-weight representation of $\dot U_q^\infty(\mathfrak{gl}(\infty))$ generated by $\mu$, and $\operatorname{End}_{fr}(V_\infty(\mu))$ is the non-unital algebra of finite-rank endomorphisms of $V_\infty(\mu)$. It turns out the kernel of the functor in Theorem \ref{thm:introthm1} is exactly the sum of $\operatorname{End}_{fr}(V_\infty(\mu))$ with $\mu_{n+1}\not\leq m$. Thus if we define $\dot U_q^{(m|n)}(\mathfrak{gl}(\infty))$ to be the quotient of $\dot U_q^\infty(\mathfrak{gl}(\infty))$ by the kernel, we have

\begin{thm}\label{thm:introthm2}
There is an equivalence of categories
\[\dot U_q^{(m|n)}(\mathfrak{gl}(\infty)) \cong \operatorname{Rep}(\mathfrak{gl}(m|n)). \]
\end{thm}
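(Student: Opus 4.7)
The plan is to leverage Theorem \ref{thm:introthm1} for fullness of the induced functor $\dot U_q^{(m|n)}(\mathfrak{gl}(\infty)) \to \operatorname{Rep}(\mathfrak{gl}(m|n))$, and then to verify essential surjectivity and faithfulness separately. Faithfulness will be automatic once we confirm that the claimed kernel -- namely the sum of $\operatorname{End}_{fr}(V_\infty(\mu))$ over dominant weights $\mu$ with $\mu_{n+1} \not\leq m$ -- really is the full kernel of the functor from Theorem \ref{thm:introthm1}. So the theorem reduces to two checks: (i) every object of $\operatorname{Rep}(\mathfrak{gl}(m|n))$ is hit up to isomorphism, and (ii) the kernel is precisely as stated.

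For essential surjectivity I would use the explicit description of the skew Howe correspondence: the object of $\dot U_q(\mathfrak{gl}(p))$ labelled by a weight $(k_1,\ldots,k_p)$ is sent to $\bigwedge^{k_1} V \otimes \cdots \otimes \bigwedge^{k_p} V$, where $V$ is the standard $U_q(\mathfrak{gl}(m|n))$-representation. Every object of $\operatorname{Rep}(\mathfrak{gl}(m|n))$ is of this form for some $p$, and by taking the union over $p$ inside $\dot U_q(\mathfrak{gl}(\infty))$, the functor hits all such objects.

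The substantive step is the kernel identification. Using the decomposition $\dot U_q^\infty(\mathfrak{gl}(\infty)) = \bigoplus_\mu \operatorname{End}_{fr}(V_\infty(\mu))$ into simple (non-unital) matrix blocks, each block is either entirely contained in the kernel or acts faithfully. So it suffices to decide, for each dominant $\mu$, whether $V_\infty(\mu)$ appears in the image of the skew Howe action on some tensor power $V^{\otimes N}$. I would invoke (or reprove) a quantum super-analogue of Howe duality for the pair $(U_q(\mathfrak{gl}(m|n)), U_q(\mathfrak{gl}(p)))$, showing that $V^{\otimes N}$ decomposes as a direct sum $\bigoplus_{\mu} L_{m|n}(\mu) \otimes V_p(\mu)$, where $\mu$ ranges over partitions of $N$ fitting inside the $(m,n)$-hook, i.e.\ satisfying $\mu_{n+1} \leq m$. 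Taking the direct limit as $p \to \infty$ identifies the $U_q(\mathfrak{gl}(p))$-isotypic components with $V_\infty(\mu)$, and so the blocks surviving in the image are exactly those with $\mu_{n+1} \leq m$, matching the claimed kernel.

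The principal obstacle is the super skew Howe decomposition itself: one must show both that $V^{\otimes N}$ as a $U_q(\mathfrak{gl}(m|n)) \times U_q(\mathfrak{gl}(p))$-bimodule decomposes with multiplicities indexed precisely by hook partitions, and that no additional relations among the summands are introduced in the super setting (e.g.\ through the interaction of fermionic and bosonic directions). The non-semisimplicity issues that sometimes plague $U_q(\mathfrak{gl}(m|n))$ need to be controlled, and the compatibility of the $p \to \infty$ limit with the block structure of $\dot U_q^\infty(\mathfrak{gl}(\infty))$ must be verified. Once the super Howe duality and the block-by-block survival criterion are in place, essential surjectivity, fullness from Theorem \ref{thm:introthm1}, and faithfulness by construction combine to give the equivalence.
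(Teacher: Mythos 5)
Your proposal follows essentially the same route as the paper: fullness comes from Theorem \ref{thm:introthm1}, essential surjectivity is immediate from the construction of the functor, and the substantive work is the kernel identification, which reduces block-by-block (via the decomposition $\dot U_q^\infty(\mathfrak{gl}(\infty)) = \sum_\mu \operatorname{End}_{fr}(V_\infty(\mu))$) to deciding which $V_p(\mu)$ occur in the relevant $U_q(\mathfrak{gl}(m|n)) \times U_q(\mathfrak{gl}(p))$-bimodule; the paper's Lemma \ref{lm:hookshaped} gives exactly the hook-shape condition $\mu_{n+1}\le m$ via a specialization-at-$q=1$ argument reducing to the classical decomposition of Cheng--Wang. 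One small slip: the bimodule that decomposes over hook partitions is $\bigwedge_q(\C^{m|n}_q\otimes\C^p_q) \cong (\bigwedge_q\C^{m|n}_q)^{\otimes p}$, not $V^{\otimes N}$ for $V$ the standard representation (that would be Schur--Weyl against the Hecke algebra rather than skew Howe against $U_q(\mathfrak{gl}(p))$), and the compatibility of the $p\to\infty$ limit with the block structure that you flag is the content of the paper's Theorem \ref{thm:inclusion}, which uses the branching rule to show that the quotient by the hook-shape ideal is stable under the inclusions $\iota_j$.
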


This Theorem generalises the main theorem of Cautis, Kamnitzer and Morrison \cite{Cautis2012} who investigated the case of $\mathfrak{gl}(m|0)$ and the author \cite{Grant2014} which covers the case of $\mathfrak{gl}(1|1)$. The quotient can be described as adding a single extra family of local relations to ladder diagrams.

It is possible to categorify the above. Highest-weight modules over $U_q(\mathfrak{gl}(p))$ are categorified by the category of projective modules over cyclotomic KLR algebras, as proposed by Khovanov and Lauda \cite{Khovanov2009}, and proved in full generality by Kang and Kashiwara \cite{Kang2012} and independently by Webster \cite{Webster2013}. We generalise this construction to categorify modules $V_\infty(\lambda)$. We define an algebra $R^\lambda$ for each $\lambda\in \Z^\infty$.

\begin{thm}\label{thm:introthm3}
The category $\operatorname{p-mod}R^\lambda$ of finite-dimensional projective left $R^\lambda$-modules categorifies $V_\infty(\lambda)$. In particular, it satisfies
\[ K_0(\operatorname{p-mod}R^\lambda)\otimes_{\Z[q,q^{-1}]} \C(q) \cong V_\infty(\lambda). \]
\end{thm}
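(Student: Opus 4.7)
The plan is to follow the Kang--Kashiwara \cite{Kang2012} and Webster \cite{Webster2013} strategy for categorifying highest-weight irreducibles by cyclotomic KLR modules, adapted to the infinite-rank setting. First I would identify the distinguished projective $\mathbf{1}_\lambda R^\lambda$ corresponding to the highest weight vector $v_\lambda$, and equip $\operatorname{p-mod} R^\lambda$ with induction and restriction functors $\mathsf{F}_i, \mathsf{E}_i$ along the natural inclusions of weight components $R^\lambda_\mu \hookrightarrow R^\lambda_{\mu+\alpha_i}$ that adjoin a KLR strand labelled $i$. On the Grothendieck group one then checks that these functors satisfy the Chevalley and quantum Serre relations, giving $K_0(\operatorname{p-mod} R^\lambda)\otimes_{\Z[q,q^{-1}]}\C(q)$ a $\dot U_q^\infty(\mathfrak{gl}(\infty))$-module structure for which $\mathsf{E}_i[\mathbf{1}_\lambda R^\lambda]=0$.

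Sending $v_\lambda \mapsto [\mathbf{1}_\lambda R^\lambda]$ then defines a homomorphism $\Phi: V_\infty(\lambda) \to K_0(\operatorname{p-mod} R^\lambda)\otimes \C(q)$. Surjectivity should be straightforward: every finite-dimensional indecomposable projective admits a presentation by iterated induction $\mathsf{F}_{i_k}\cdots \mathsf{F}_{i_1}(\mathbf{1}_\lambda R^\lambda)$ (up to grading shifts), since a module in $\operatorname{p-mod} R^\lambda$ has support on only finitely many KLR strand labels, and the induction functors exhaust the Gabriel quiver of $\operatorname{p-mod} R^\lambda$.

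The main obstacle is injectivity of $\Phi$, which amounts to showing that the ideal cutting $R^\lambda$ out of the ambient KLR algebra is the minimal one compatible with the weight $\lambda$ --- neither smaller nor larger than needed. Following Kang--Kashiwara, the cleanest route is to exhibit explicit short exact sequences of bimodules relating $\mathsf{E}_i \mathsf{F}_j$ and $\mathsf{F}_j \mathsf{E}_i$ whose graded Euler characteristic realises the commutator $[E_i, F_j]\mathbf{1}_\mu = \delta_{ij}[\langle h_i, \mu\rangle]_q \mathbf{1}_\mu$. Since each finite-dimensional projective only involves finitely many Dynkin labels, the argument should reduce to a colimit of the cyclotomic KLR categorification theorems of Kang--Kashiwara and Webster applied to truncations at level $p$; the technical point to verify is that the cyclotomic relations stabilise compatibly as $p\to \infty$, so that the inductive system of finite-rank categorifications of the $V_p(\lambda)$ really does assemble into a categorification of $V_\infty(\lambda)$, with no collapse or extra relations appearing in the limit.
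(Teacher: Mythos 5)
Your strategy is sound and aligns with the paper's: both reduce to the Kang--Kashiwara/Webster categorification of finite-rank cyclotomic KLR algebras by observing that everything needed is already determined at some finite level $p$. The technical route differs, though. You propose to construct the module map $\Phi: V_\infty(\lambda) \to K_0(\operatorname{p-mod} R^\lambda)\otimes\C(q)$ directly and then verify surjectivity and injectivity by hand (the latter via Kang--Kashiwara-style exact bimodule sequences realising $[E_i,F_j]$). The paper instead proves that $\operatorname{p-mod}R^\lambda$ is a 2-representation of $U_q(\mathfrak{gl}(\infty))$ in the sense of Rouquier/Cautis--Lauda (so that $E_i$ and $F_i$ are biadjoint, the KLR 2-morphisms act, and $\rho_i$, $\sigma_{ij}$ are invertible), and then concludes at once since the 2-representation is generated by the highest-weight component $\operatorname{p-mod}R^\lambda(0)\cong\operatorname{Vect}_\kk$; this packages the injectivity/surjectivity argument into the 2-representation framework and avoids re-deriving the Grothendieck-group relations by hand. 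The stabilisation point you flag as the "technical point to verify" is exactly what carries the paper's argument: each weight component $R^\lambda(\beta)$ is not merely compatible with but \emph{literally equal} to $R^{(\lambda_1,\ldots,\lambda_p)}(\beta)$ for any $p$ large enough that $\beta_i=0$ for $i>p$, because the cyclotomic relation $x_1^{\lambda_{\nu_1}-\lambda_{\nu_1+1}}e(\nu)=0$ depends only on consecutive differences of $\lambda$, which are unchanged under appending zeros. Making this one observation explicit is the key step your sketch leaves as "to verify"; once done, the finite-rank theorems apply verbatim on each weight component, and either your route or the paper's yields the result.
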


Thus we can categorify $\operatorname{End_{fr}}(V_\infty(\lambda))$ by taking the quotient of $\dot{\mathscr{U}}_Q(\mathfrak{gl}(\infty))$ by 2-morphisms that act as $0$ on $\operatorname{p-mod}R^\lambda$. We call the resulting 2-category $\mathcal{E}(V_\infty(\lambda))$. We can then define a 2-category $\mathscr{R}(\mathfrak{gl}(m|n))$ as $\sum_{\mu\in H} \mathcal{E}(V_\infty(\mu))$ where $\sum$ is the coproduct of 2-categories, and $H$ is the set of dominant weights with $\mu_{n+1}\leq m$.

\begin{thm}There is an isomorphism of algebras
\[ K_0(\mathscr{R}(\mathfrak{gl}(m|n)))\otimes_{\Z[q,q^{-1}]} \C(q) \cong \operatorname{Rep}(\mathfrak{gl}(m|n)) \]
which is also an equivalence of categories.
\end{thm}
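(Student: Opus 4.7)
The plan is to decompose both sides along the coproduct structure and then combine Theorems \ref{thm:introthm2} and \ref{thm:introthm3}. Since $\mathscr{R}(\mathfrak{gl}(m|n))$ is by construction the coproduct $\sum_{\mu\in H}\mathcal{E}(V_\infty(\mu))$ of 2-categories, taking $K_0$ commutes with the coproduct and yields
\[ K_0(\mathscr{R}(\mathfrak{gl}(m|n)))\otimes_{\Z[q,q^{-1}]}\C(q) \;\cong\; \bigoplus_{\mu\in H} K_0(\mathcal{E}(V_\infty(\mu)))\otimes_{\Z[q,q^{-1}]}\C(q). \]
So the theorem reduces to identifying each summand with $\operatorname{End}_{fr}(V_\infty(\mu))$ compatibly with composition, after which the decomposition $\dot U_q^{(m|n)}(\mathfrak{gl}(\infty))=\sum_{\mu\in H}\operatorname{End}_{fr}(V_\infty(\mu))$ recalled before Theorem \ref{thm:introthm2} and Theorem \ref{thm:introthm2} itself together deliver the claim.

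To identify a single summand, I would unwind the definition of $\mathcal{E}(V_\infty(\mu))$ as the quotient of $\dot{\mathscr{U}}_Q(\mathfrak{gl}(\infty))$ by all 2-morphisms acting as zero on $\operatorname{p-mod}R^\mu$. Theorem \ref{thm:introthm3} states that the action of $\dot{\mathscr{U}}_Q(\mathfrak{gl}(\infty))$ on $\operatorname{p-mod}R^\mu$ decategorifies, after tensoring with $\C(q)$, to the usual $\dot U_q(\mathfrak{gl}(\infty))$-action on $V_\infty(\mu)$. Consequently $K_0$ of the quotient injects into $\operatorname{End}_{\C(q)}(V_\infty(\mu))$, with image equal to the image of the action map $\dot U_q(\mathfrak{gl}(\infty))\to\operatorname{End}_{\C(q)}(V_\infty(\mu))$. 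Since every weight space of $V_\infty(\mu)$ is finite-dimensional and is the $K_0$-class of an explicit projective of $R^\mu$, a density argument forces this image to coincide with $\operatorname{End}_{fr}(V_\infty(\mu))$.

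The upgrade from an algebra isomorphism to an equivalence of categories is then a matter of matching generating data. The weight 1-morphisms of $\mathscr{R}(\mathfrak{gl}(m|n))$ correspond under the identification above to the idempotents $1_\lambda$ of $\dot U_q^{(m|n)}(\mathfrak{gl}(\infty))$, and these correspond via Theorem \ref{thm:introthm2} to the tensor-product objects of $\operatorname{Rep}(\mathfrak{gl}(m|n))$. Matching hom-spaces is automatic from the algebra isomorphism, since in both presentations a morphism from $\lambda$ to $\lambda'$ is cut out by the idempotent pair $(1_{\lambda'},1_\lambda)$.

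The main obstacle is the surjectivity assertion in the second step, namely that the categorified representation on $\operatorname{p-mod}R^\mu$ hits \emph{every} finite-rank endomorphism of $V_\infty(\mu)$, not merely a subalgebra. This reduces to verifying that the indecomposable projectives of $R^\mu$ have Grothendieck classes that exhaust the weight spaces of $V_\infty(\mu)$, so that the Kang--Kashiwara/Webster-style categorification yields a full image on each weight space and hence, by a standard Jacobson density argument adapted to the integrable highest-weight setting, the whole of $\operatorname{End}_{fr}(V_\infty(\mu))$. Once this is in place, the injectivity and the compatibility with composition are formal from the defining quotient.
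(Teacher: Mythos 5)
Your high-level plan --- decompose along the coproduct, reduce to identifying $K_0(\mathcal{E}(V_\infty(\mu)))\otimes\C(q)$ with $\operatorname{End}_{fr}(V_\infty(\mu))$, and then invoke Lemma \ref{lem:decomp} and Theorem \ref{thm:descriptionOfRep2} --- is the same as the paper's, and your surjectivity argument (density of the action on an irreducible integrable module plus the Kang--Kashiwara categorification of weight spaces) is essentially the content the paper imports through Lemma \ref{algiso} and Lemma \ref{lem:decomp}.

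However, there is a genuine gap where you write that ``$K_0$ of the quotient injects into $\operatorname{End}_{\C(q)}(V_\infty(\mu))$'' and later that ``the injectivity \ldots [is] formal from the defining quotient.'' It is not. The quotient $\mathcal{E}(V_\infty(\mu))$ kills exactly the $2$-morphisms acting as zero on $\operatorname{p-mod}R^\mu$, which makes the $2$-functor to $\operatorname{p-mod}R^\mu$ faithful on $2$-morphisms, but faithfulness on $2$-morphisms does not imply injectivity of the induced map on split Grothendieck groups: two non-isomorphic $1$-morphisms could a priori become isomorphic in $\mathcal{E}(V_\infty(\mu))$, or their classes could satisfy a nontrivial relation in $K_0$ after passing to the quotient. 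The paper's Theorem \ref{thm:catEnd} resolves this by running the map in the opposite direction using the canonical basis: $\operatorname{End}_{fr}(V_\infty(\mu))$ inherits a basis from the canonical basis of $\dot U_q(\mathfrak{gl}(\infty))$, each nonzero basis element corresponds (by Webster's theorem) to an indecomposable $1$-morphism of $\dot{\mathscr{U}}_Q(\mathfrak{gl}(\infty))$ whose identity $2$-morphism survives the quotient, and the classes of these surviving indecomposables are linearly independent in $K_0(\mathcal{E}(V_\infty(\mu)))$ by the Krull--Schmidt property. Without this canonical-basis/indecomposable dictionary, the injectivity claim in your argument is unjustified; incidentally, you have flagged surjectivity as the ``main obstacle,'' whereas it is the injectivity step that requires the new idea.
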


Since $\dot{\mathscr{U}}^\infty_Q(\mathfrak{gl}(\infty))$ has a graphical description by foams, it follows that the 2-category $\mathscr{R}(\mathfrak{gl}(m|n))$ has a description by foams. Foams in $\mathcal{E}(V_\infty(\mu))$ have an extra relation according to the cyclotomic relation on $R^\mu$. In the case of $\mu=(n,n,\ldots,n,0,\ldots)$, this extra relation is exactly the relation that a foam with $n$ dots on a 1-coloured facet is $0$. In Section \ref{sec:foamssln}, we show how this can be used to recover the foam definition of Khovanov-Rozansky homology as described by Queffelec and Rose \cite{Queffelec}.

We note that in general, cyclotomic relations on foams are non-local, and certainly do not depend only on the colour of the facet of the foam. This makes it harder to have a purely topological description of the foams, since the rules for dot migration will have to be more complicated. This non-locality is discussed in Section \ref{sec:non-local}.

One important motivation for this paper is categorifying the Alexander polynomial. The Alexander polynomial of a knot $K$ arises from the Reshetikhin-Turaev construction applied to the standard representation of $\mathfrak{gl}(1|1)$, as well as from more topological definitions via $\pi_1(S^3\setminus K)$. The first categorification of the Alexander polynomial is due to Ozsv\'{a}th and Szab\'{o} \cite{Ozsvath2004} and (independently) Rasmussen \cite{Rasmussen2003}. This is a Floer-theoretic categorification, so does not \textit{a priori} fit naturally into the plethora of combinatorial quantum knot homologies defined since Khovanov's seminal work \cite{Khovanov1999}. Gilmore \cite{Gilmore2010} gave a cube of resolutions description of Heegaard Floer knot homology, and this description necessarily involved non-local relations. Such non-local relations are somewhat unexpected for those working with quantum knot homologies, but they occur quite naturally in our setting. This may mean there is a connection between Heegaard Floer knot homology and quantum knot invariants.

An alternative categorification of the category of tensor powers of the standard representation of $U_q(\mathfrak{gl}(1|1))$ was given by Sartori in \cite{Sartori2013a} using methods coming from BGG category $\mathcal{O}$. It is not yet clear how his categorification is related to our one.

An obstruction to defining link homologies for $U_q(\mathfrak{gl}(m|n))$ is that the category $\operatorname{Rep}(m|n)$ does not contain duals unless $n=0$. This means it is not possible to describe a closed diagram using only ladder diagrams. Hence it will be necessary to enlarge $\operatorname{Rep}(m|n)$ to include duals if we wish to categorify the link polynomials associated with $U_q(\mathfrak{gl}(m|n))$. Queffelec and Sartori \cite{Queffelec2014,Queffelec2015} have described a doubled Schur algebra, which allows them to include duals in their category of representations, and so they are able to define the link polynomials associated to $U_q(\mathfrak{gl}(m|n))$ and also the HOMFLY polynomial purely from their doubled Schur algebra.

Another interesting special case to study is $\operatorname{Rep}(\mathfrak{gl}(0|n))$. We have $\mathfrak{gl}(n|0)\cong \mathfrak{gl}(0|n)$, and exterior powers of the standard representation $\C^{0|n}_q$ of $\mathfrak{gl}(0|n)$ are symmetric powers of the standard representation $\C^n_q$ of $\mathfrak{gl}(n|0)$. Hence we also obtain a diagram calculus for the monoidal category $\operatorname{Sym}(\mathfrak{gl}(n))$ of symmetric powers of the standard representation of $\mathfrak{gl}(n)$, and we have a categorification of this given by $\mathscr{R}(\mathfrak{gl}(0|n))$. See section \ref{sec:symmetric} for details. An exploration of diagrams for mixed symmetric and exterior powers of the fundamental representation of $\mathfrak{gl}(n)$ was begun by Rose and Tubbenhauer for $n=2$ \cite{Rose2015}, and Tubbenhauer, Vaz and Wedrich for all $n$ \cite{Tubbenhauer2015}. Mixing symmetric and exterior powers simplifies the diagram calculus somewhat and in the case of $n=2$ allows a description of closed diagrams due to cups and caps coming from the Temperley-Lieb algebra.

\subsection{Plan of the paper}
In Section \ref{sec:skewhowe} we recall some basic definitions and prove a version of quantum skew Howe duality for $U_q(\mathfrak{gl}(p))$ acting on the category $\operatorname{Rep}(\mathfrak{gl}(m|n))$ of exterior powers of the standard representation of $U_q(\mathfrak{gl}(m|n))$.

In Section \ref{sec:ladderstop} we define ladders and give a diagrammatic description of morphisms in $\operatorname{Rep}(\mathfrak{gl}(m|n))$ by ladders. We also describe the extra ladder relation that appears in $\operatorname{Rep}(\mathfrak{gl}(m|n))$ and show that is a local relation. For the case of $n=1$, we compute this relation explicitly.

In Section \ref{sec:catquantumgroup}, we recall the definitions of the categorified quantum group for $\mathfrak{gl}(p)$ and 2-representations. We also generalise this to $\mathfrak{gl}(\infty)$.

In Section \ref{sec:foams}, we define the foam 2-category by an equivalence with $\mathscr{U}^\infty_Q(\mathfrak{gl}(\infty))$.

In Section \ref{sec:catirreducibleweightmodules} we recall cyclotomic KLR algebras $R^\lambda$, and generalise these to the case of weights of $\mathfrak{gl}(\infty)$. We prove that the category $\operatorname{p-mod}R^\lambda$ is a categorification of $V_\infty(\lambda)$.

In Section \ref{sec:catofrep}, we use the above to define extra relations on foams in order to categorify $\operatorname{Rep}(\mathfrak{gl}(m|n))$, and give examples.

Finally in Section \ref{sec:specialcases}, we discuss the special cases $\mathfrak{gl}(m|0)$, $\mathfrak{gl}(0|m)$ and $\mathfrak{gl}(1|1)$. The former leads to Khovanov-Rozansky knot homology as defined by Queffelec and Rose, the case of $\mathfrak{gl}(0|m)$ gives a categorification of the category of symmetric powers of the standard representation, and the case of $\mathfrak{gl}(1|1)$ has conjectural connections with Heegaard Floer knot homology.

\begin{acknowledgement}
I would like to thank Daniel Tubbenhauer for helpful discussions at an early stage of this project, and the hospitality of the QGM in hosting me. I would also like to thank Paul Wedrich and Jake Rasmussen for probing questions and comments, and Pedro Vaz for interesting discussions and comments. Thanks also to Antonio Sartori for comments on an earlier version of this paper. This work was supported by an EPSRC doctoral training grant.
\end{acknowledgement}

\section{Skew Howe Duality for \texorpdfstring{$\mathfrak{gl}(m|n)$}{gl(m|n)}}\label{sec:skewhowe}
\subsection{Definitions}\label{sec:definitions}
We are mostly concerned with representations of $U_q(\mathfrak{gl}(m|n))$, so in this section we recall the definition of $U_q(\mathfrak{gl}(m|n))$ and its properties that allow us to form the category $\operatorname{Rep}(\mathfrak{gl}(m|n))$.
\begin{definition}Define $U_q(\mathfrak{gl}(m|n))$ to be the $\Z/2\Z$-graded $\C(q)$-algebra generated by $E_1,\ldots,E_{n+m-1}$, $F_1,\ldots, F_{n+m-1}$, $L_1^{\pm 1},\ldots,L_{m+n}^{\pm 1}$ with $\deg E_m=\deg F_m =1$ and all other generators even. Let $K_i=L_iL_{i+1}^{-1}$ for all $i$. For $i\in \{1,\ldots,m+n\}$ let $\{i\}=1$ if $i\leq m$ and $\{i\}=-1$ if $i>m$. Then the relations on this algebra are
\[ L_iL_j=L_jL_i, \quad \text{for all } \, i,j \]
\[ L_i E_j= q^{\{j\}(\delta_{i,j}-\delta_{i,j+1})}E_j L_i  \]
\[ L_i F_j= q^{\{j\}(\delta_{i,j+1}-\delta_{i,j})}F_j L_i \]
\[ E_iF_i - F_iE_i = \frac{ K_i - K_i^{-1}}{q^{\{i\}}- q^{-\{i\}}}, \,\quad i\neq m \]
\[ E_mF_m + F_m E_m = \frac{ K_m-K_m^{-1}}{q-q^{-1}} \]
\[ E^2_m =F_m^2=0 \] 
\[ E_iE_j = E_jE_i, \,\quad |i-j|>1 \]
\[ F_iF_j=F_jF_i, \,\quad |i-j|>1 \]
\[ E_i^2E_{i\pm 1}-[2]E_iE_{i\pm1}E_i + E_{i\pm 1}E_i^2 = 0,\,\quad i\neq m \]
\[ F_i^2F_{i\pm 1}-[2]F_iF_{i\pm1}F_i + F_{i\pm 1}F_i^2 = 0,\,\quad i\neq m \]
\[ [2]E_mE_{m+1}E_{m-1}E_{m}= E_{m+1}E_{m}E_{m-1}E_{m} + E_{m}E_{m+1}E_{m}E_{m-1} +  E_mE_{m-1}E_mE_{m+1} + E_{m-1}E_mE_{m+1}E_{m} \]
\[[2]F_mF_{m+1}F_{m-1}F_{m}= F_{m+1}F_{m}F_{m-1}F_{m} + F_{m}F_{m+1}F_{m}F_{m-1} + F_mF_{m-1}F_mF_{m+1} + F_{m-1}F_mF_{m+1}F_{m} \]
\end{definition}

Algebras with $\Z/2\Z$-gradings where the grading introduces signs into the defining relations are often referred to as \emph{superalgebras}. Thus the above definition is a quantisation of the universal enveloping superalgebra of the Lie superalgebra $\mathfrak{gl}(m|n)$.

A representation of $U_q(\mathfrak{gl}(m|n))$ is the same thing as a module over $U_q(\mathfrak{gl}(m|n))$, and both terms are freely switched between in the literature.

The algebra $U_q(\mathfrak{gl}(m|n))$ has the structure of a Hopf superalgebra. We choose the coproduct
\[ \Delta(E_i)=E_i\otimes K_i + 1\otimes E_i, \, \Delta(F_i)=F_i\otimes 1 + K_i^{-1}\otimes F_i,\, \Delta(L_i)=L_i\otimes L_i. \]
Using this, given two representations $V,W$ of $U_q(\mathfrak{gl}(m|n))$, we can define a new representation $V\otimes W$ by $X\cdot (v\otimes w) = \Delta(X)v\otimes w$ with the understanding that $A\otimes B(v\otimes w)=(-1)^{\deg B\deg v}Av\otimes Bw$ on homogeneous elements.

The standard (or vector) representation of $U_q(\mathfrak{gl}(m|n))$ is defined to be the $\Z/2\Z$-graded $\C(q)$-vector space $\C^{m|n}_q= \langle v_1,\ldots,v_m,v_{m+1},\ldots v_{m+n}\rangle$, where $\deg v_i=0$ for $i\leq m$ and $\deg v_i=1$ for $i>m$. With
\[ F_i v_i=v_{i+1} \]
\[ E_i v_{i+1}=v_i\]
\[ L_i v_i = qv_i  \]
where $E_i$, $F_i$ act as $0$ otherwise, and $L_k$ acts as identity otherwise.

Due to the fact that the coproduct is not cocommutative, there does not generally exist a map of representations $V\otimes W\to W\otimes V$ that simply flips the tensor factors. However, as usual with quantised universal enveloping algebras, it is possible to modify the flip map to produce an isomorphism
\[ R:V\otimes W\to W\otimes V \]
that commutes with the action of $U_q(\mathfrak{gl}(m|n))$. An important feature of this map is that, unlike the flip map, we have $R^2\neq \id$, which means $R\neq R^{-1}$.

The standard way to introduce the exterior algebra is as a quotient that involves an ideal of symmetric tensors. In other words, the symmetric tensors are the $(+1)$-eigenspace of the flip map $V\otimes V\to V\otimes V$, and the antisymmetric tensors are the $(-1)$-eigenspace. The appearance of the flip map suggests the modification in \cite{Berenstein2008}, where we instead define the symmetric tensors as the union of $(q^r)$-eigenspaces of the map $R:V\otimes V\to V\otimes V$ for all $r\in \Z$. This is due to the fact that all eigenvalues of $R$ are either $q^r$ or $-q^r$. Thus we have the following:
\[\operatorname{Sq}^2(\C^{m|n}_q)=\langle v_i\otimes v_j+(-1)^{\deg (v_i)\deg (v_j)}q v_j\otimes v_i, v_k\otimes v_k\rangle\]
with $i\leq j$, and $k\leq m$. Note that $v_{m+l}\otimes v_{m+l}\notin \operatorname{Sq}^2(\C^{m|n}_q)$ for $l>0$. Forming the two-sided ideal $(\operatorname{Sq}^2(\C^{m|n}_q))$ generated by $\operatorname{Sq}^2(\C^{m|n}_q)$ inside the tensor algebra $T\C^{m|n}_q$, we can define \[\bigwedge_q (\C^{m|n}_q):= T\C^{m|n}_q/\operatorname{Sq}^2(\C^{m|n}_q).\]
By using the $R$-matrix instead of the flip map to define $\operatorname{Sq}^2(\C^{m|n}_q)$, we have ensured that $\bigwedge_q(\C^{m|n}_q)$ is a well-defined representation of $U_q(\mathfrak{gl}(m|n))$.

Throughout the paper we shall use the notation $[n]=\frac{q^n-q^{-n}}{q-q^{-1}}$ for quantum numbers, as well as $[n]!=[n][n-1]\cdots[2]$ and $\chuse{n}{k}=\frac{[n]!}{[n-k]![k]!}$.

An important role is often played by the divided powers, $F_i^{(k)}=\frac{F_i^k}{[k]!}$ and similarly $E_i^{(k)}=\frac{E_i^k}{[k]!}$.

\subsection{Highest-weight representations}
Given a weight $\lambda\in \Z^{m+n}$, the highest-weight representation $V_{m|n}(\lambda)$ is defined by a vector $v^\lambda$ with $L_iv^\lambda=q^{\lambda_i}v^\lambda$ and $E_iv^\lambda=0$ for all $i$, and the relation that $F_i^{(\lambda_i-\lambda_{i+1}+1)}v^\lambda=0$ for $i\leq m-1$, and $F_i^{(-\lambda_i+\lambda_{i+1}+1)}v^\lambda=0$. Note that in the defining relations of $U_q(\mathfrak{gl}(m|n))$, we in fact have $F_m^2=0$, so $F_m^2v^\lambda=0$ regardless of the value of $\lambda_m$ and $\lambda_{m+1}$.

The standard representation $\C^{m|n}_q$ above is the highest-weight representation $V_{m|n}((1,0,\ldots,0))$.

The case we shall mostly be interested in are the highest-weight modules of $U_q(\mathfrak{gl}(m|0))=U_q(\mathfrak{gl}(m))$. These are finite dimensional if and only if $\lambda \in \Z^m$ satisfies $\lambda_1\geq \lambda_2\geq \cdots\geq \lambda_m\geq 0$, that is, if and only if $\lambda$ is a partition of the natural number $\sum_i \lambda_i$. In this case, $\lambda$ is said to be \emph{dominant}.

\subsection{Skew Howe Duality}
An extremely powerful tool for studying the representations $\bigwedge_q(\C^{m|n}_q)$ is skew Howe duality. The quantum version of this was first introduced by Cautis, Kamnitzer and Morrison \cite{Cautis2012}. We will place a particular importance on the algebra $\mathfrak{gl}(p|0)$, which we simply denote $\mathfrak{gl}(p)$. This is the classical Lie algebra, with odd-degree part $0$. In this subsection, we show there is an action of $U_q(\mathfrak{gl}(p))$ on $\left(\bigwedge_q \C^{m|n}_q \right)^{\otimes p}$ that generates the commutant of the $U_q(\mathfrak{gl}(m|n))$ action. This is a special case of a theorem proved by Queffelec and Sartori \cite[Theorem 4.2]{Queffelec2015}.

\begin{lem}\label{lm:flatmodule}
The $q=1$ specialisation of $\bigwedge_q(\C^{m|n}_q\otimes \C^p_q)$ is isomorphic to $\bigwedge(\C^{m|n}\otimes \C^p)$ as modules over $U(\mathfrak{gl}(m|n)\oplus\mathfrak{gl}(p))$.
\end{lem}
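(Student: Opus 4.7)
The plan is to work with an integral form over $A=\C[q,q^{-1}]$, establish that the quantum exterior algebra $\bigwedge_q(\C^{m|n}_q\otimes\C^p_q)$ arises by base change from a free $A$-module $\bigwedge_A(V_A)$, and then check directly that specialising at $q=1$ recovers the classical exterior algebra together with its natural $U(\mathfrak{gl}(m|n)\oplus\mathfrak{gl}(p))$-action.

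The first step is to compute the $R$-matrix on $V\otimes V$ for $V=\C^{m|n}_q\otimes \C^p_q$. Since $R$ on the standard representation is explicit, a direct calculation shows that $\operatorname{Sq}^2(V)$ is spanned by the relations
\[ x\otimes y + (-1)^{|x||y|}q^{\epsilon(x,y)}y\otimes x, \qquad x\otimes x \text{ ($x$ even)},\]
where $\epsilon(x,y)\in\{-1,0,1\}$ depends only on whether the $\C^{m|n}_q$- and $\C^p_q$-components of the basis vectors $x,y$ coincide. All coefficients lie in $A$, so one can form an integral exterior algebra $\bigwedge_A(V_A)$ as $T_A(V_A)$ modulo these relations.

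Next I would fix a total order on the basis $\{v_i\otimes e_j\}$ that groups the odd generators together, and run a Bergman diamond lemma / PBW argument to show that the monomials which are strictly increasing on even generators and weakly increasing on odd generators form an $A$-basis of $\bigwedge_A(V_A)$. This makes $\bigwedge_A(V_A)$ a free $A$-module whose rank in each tensor degree equals $\dim_\C\bigwedge(\C^{m|n}\otimes\C^p)$, and at $q=1$ each defining relation specialises to the corresponding classical super-symmetry relation, so
\[ \bigwedge_A(V_A)\otimes_A(A/(q-1))\;\cong\;\bigwedge(\C^{m|n}\otimes\C^p) \]
as graded $\C$-vector spaces. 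Finally, the generators $E_i, F_i$ act on $V_A$ by integral matrices independent of $q$, while each $L_i$ acts diagonally by a power of $q$ that specialises to $1$; the coproducts of $U_q(\mathfrak{gl}(m|n))$ and $U_q(\mathfrak{gl}(p))$ have coefficients in $A$ and specialise to the classical super-coproducts, so the actions descend to $\bigwedge_A(V_A)$ and recover the standard $U(\mathfrak{gl}(m|n)\oplus\mathfrak{gl}(p))$-action at $q=1$.

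The main obstacle is the diamond lemma step. In the super setting the sign $(-1)^{|x||y|}$ and the factor $q^{\epsilon(x,y)}$ interact, so the choice of total order — and especially the conventions for pairs involving two odd generators — must be made carefully in order for all critical pairs of the rewriting system to resolve consistently. Once that verification is in place, the remaining steps are essentially specialisation and a comparison of explicit formulas.
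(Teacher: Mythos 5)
Your high-level strategy --- exhibit an $A=\C[q,q^{-1}]$-integral form $\bigwedge_A(V_A)$ with a PBW-type monomial basis, then specialise at $q=1$ and match the actions --- is sound and close in spirit to what the paper does. The paper's route is a bit more direct: it writes out the commutation relations among the generators $v_{ij}=v_i\otimes x_j$, exhibits a spanning set of wedge monomials, and deduces linear independence over $\C(q)$ by checking linear independence of the specialisation at $q=1$. You would replace that last step with a Bergman diamond lemma verification, which is a legitimate alternative provided the rewriting system is the right one.

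The gap is in your description of $\operatorname{Sq}^2(V)$. You claim it is spanned by pure $q$-commutation relations $x\otimes y + (-1)^{|x||y|} q^{\epsilon(x,y)} y\otimes x$ (together with $x\otimes x$ for $x$ even), but that is false once the $\C^{m|n}_q$ and $\C^p_q$ components of $x$ and $y$ \emph{both} differ. Taking $a=v_i\otimes x_k$, $b=v_i\otimes x_l$, $c=v_j\otimes x_k$, $d=v_j\otimes x_l$ with $i<j$, $k<l$, the quotient algebra contains a genuine cross-term: the paper's computation gives
\[ bc + (-1)^{\deg b \deg c}\, cb = (q-q^{-1})\bigl(1+(-1)^{\deg b \deg c}\bigr)\,ad, \]
with a nonzero right-hand side whenever $b,c$ are not both odd. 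Correspondingly, the two-dimensional space of generators of $\operatorname{Sq}^2(V)$ supported on $\{a\otimes d,\,b\otimes c,\,c\otimes b,\,d\otimes a\}$ cannot be written as two vectors of your claimed form; after peeling off the $a\otimes d + (-1)^{\deg a\deg d}\,d\otimes a$ direction, what remains still has a nonzero $a\otimes d$ component. So your proposed reduction rules generate a strictly smaller ideal than $\operatorname{Sq}^2(V)$, and running the diamond lemma on them would produce a PBW basis for the wrong algebra. The repair is to include rules of the form $c\otimes b\mapsto -(-1)^{\deg b\deg c}\,b\otimes c + (\text{scalar})\,a\otimes d$ --- right-hand sides that are sums of monomials rather than single monomials --- which is exactly what makes the confluence check nontrivial here, and is presumably why the paper sidesteps the diamond lemma in favour of the $q=1$ specialisation shortcut.
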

\begin{proof}
Letting $\tau_{23}$ be the map that permutes the middle two of four tensor factors, we have that
\[ R_{\C^{m|n}_q\otimes \C^p_q}=\tau_{23}\circ (R_{\C_q^{m|n}}\otimes R_{\C^p_q})\circ \tau_{23} \]
is the $R$-matrix on the standard module $\C^{m|n}_q\otimes \C^p_q$ of $U_q(\mathfrak{gl}(p)\oplus \mathfrak{gl}(m|n))$, so it follows that
\[ \operatorname{Sq}^2(\C_q^{m|n}\otimes \C^p_q)=\tau_{23}\left( (\operatorname{Sq}^2(\C_q^{m|n})\otimes \operatorname{Sq}^2(\C^p_q) ) \oplus (\bigwedge_q^2(\C_q^{m|n})\otimes \bigwedge_q^2(\C^p_q) )\right). \]
Letting $v_1,\ldots,v_{m+n}$ be the standard basis of $\C^{m|n}_q$ and $x_1,\ldots,x_p$ the standard basis of $\C^p_q$, we have a spanning set
\[ (v_i\otimes v_i)\otimes (x_k\otimes x_k), \quad i\leq m \]
\[ (v_i\otimes v_i)\otimes (x_k\otimes x_l +q x_l\otimes x_k ), \quad i\leq m, k<l \]
\[ (v_i\otimes v_j + (-1)^{\deg(v_i)\deg(v_j)}qv_j\otimes v_i) \otimes (x_k\otimes x_k), \quad i<j \]
\[(v_i\otimes v_j + (-1)^{\deg(v_i)\deg(v_j)}qv_j\otimes v_i) \otimes (x_k\otimes x_l +q x_l\otimes x_k ), \quad i<j, k<l \]
\[ (qv_i\otimes v_j - (-1)^{\deg v_i\deg v_j}v_j\otimes v_i)\otimes (qx_k\otimes x_l-x_l\otimes x_k), \quad i<j, k<l \]
\[ (v_i\otimes v_i)\otimes (qx_k\otimes x_l-x_l\otimes x_k), \quad i>m, k<l
\]
of $\tau_{23}\operatorname{Sq}^2(\C_q^{m|n}\otimes \C^p_q)$.
Let $a=v_i\otimes x_k$, $b=v_i\otimes x_l$, $c=v_j\otimes x_k$, $d=v_j\otimes x_l$ with $\Z/2\Z$-grading determined by the degree of $v_i$ or $v_j$, and we take $i<j$, $k<l$. Then in $\bigwedge_q (\C^{m|n}_q\otimes \C^p_q)$ we have
\[ x^2=\frac 1 2 x^2+(-1)^{1+\deg x}\frac 1 2 x^2, \quad \mathrm{for }\, x\in \{a,b,c,d\} \]
\[ab=(-1)^{\deg a} q^{1-2\deg a}ba, \quad
ac = (-1)^{1+\deg a\deg c}q ca \] \[ bd = (-1)^{1+\deg b\deg d}q bd,\quad ad = (-1)^{1+ \deg a \deg d}da
\]
\[ bc +(-1)^{\deg b \deg c}cb = (q-q^{-1})(1+(-1)^{\deg b \deg c})ad.
\]

Then $\bigwedge_q(\C^{m|n}_q\otimes \C^p_q)$ is generated by $\{v_i\otimes x_j\mid 1\leq i\leq m+n,1\leq j\leq p\}$ subject to the relations above. Let $v_{ij}=v_i\otimes x_j$. Then there is a spanning set given by elements of the form $v_{i_1j_1}\wedge\cdots \wedge v_{i_lj_l}$, with $1\leq i_1 <\cdots < i_l \leq m+n$ and $1\leq j_1<\cdots <j_l\leq p$ for all $l\neq \N$. This is linearly independent, since by setting $q=1$ in the relations, we see that it is linearly independent at $q=1$.

Hence the dimension of the $q=1$ specialisation of $\bigwedge_q (\C^{m|n}_q\otimes \C^p_q)$ is equal to that of $\bigwedge (\C^{m|n}\otimes \C^p)$, and so it follows that the $q=1$ specialisation of $\bigwedge_q(\C^{m|n}_q\otimes \C^p_q)$ is isomorphic to $\bigwedge(\C^{m|n}\otimes \C^p)$ as modules over $U(\mathfrak{gl}(m|n)\oplus \mathfrak{gl}(p))$.
\end{proof}

\begin{lem}\label{lm:hookshaped}
There is an isomorphism
\[ \bigwedge_q(\C^{m|n}_q\otimes \C^p_q) \cong \bigoplus_{\mu\in H} V_{m|n}(\mu^t)\otimes V_p(\mu) \]
where $H$ is the set of dominant $\mathfrak{gl}(p)$ weights with $\mu_{n+1}\leq m$, $\mu^t$ is the reflection of the Young diagram about the diagonal, and $V_{m|n}(\mu^t)$ and $V_p(\mu)$ are highest-weight modules of $U_q(\mathfrak{gl}(m|n))$ and $U_q(\mathfrak{gl}(p))$ respectively.
\end{lem}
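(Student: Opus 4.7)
The plan is to reduce to the classical ($q=1$) situation via Lemma \ref{lm:flatmodule} and then invoke super skew Howe duality. At $q=1$, the decomposition
\[ \bigwedge(\C^{m|n}\otimes \C^p) \cong \bigoplus_{\mu\in H} V_{m|n}(\mu^t)\otimes V_p(\mu) \]
is the covariant case of classical super skew Howe duality for the pair $\mathfrak{gl}(m|n)\oplus \mathfrak{gl}(p)$: the irreducibles $V_{m|n}(\mu^t)$ appearing on the right are precisely the covariant (polynomial) irreducibles of $\mathfrak{gl}(m|n)$, and the hook condition $\mu_{n+1}\leq m$ is equivalent to requiring that $\mu^t$ fits in the $(m,n)$-hook, which is exactly the condition for $V_{m|n}(\mu^t)\neq 0$. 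I would cite this classical decomposition from a standard reference on Lie superalgebras rather than re-derive it.

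To lift to the quantum setting, I would first note that the actions of $U_q(\mathfrak{gl}(m|n))$ and $U_q(\mathfrak{gl}(p))$ on $\bigwedge_q(\C^{m|n}_q\otimes \C^p_q)$ commute, since they come from the Hopf superalgebra coproduct on distinct tensor factors and commute with the $R$-matrix used to define the quantum exterior algebra. Since the category of finite-dimensional type-$1$ $U_q(\mathfrak{gl}(p))$-modules is semisimple, one obtains an isotypic decomposition
\[ \bigwedge_q(\C^{m|n}_q\otimes \C^p_q) \cong \bigoplus_\mu W_\mu \otimes V_p(\mu), \]
where each $W_\mu$ is a $U_q(\mathfrak{gl}(m|n))$-module. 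By Lemma \ref{lm:flatmodule}, the joint formal character of the left-hand side equals that of its $q=1$ specialisation, so the classical decomposition forces $W_\mu=0$ for $\mu\notin H$ and $\operatorname{ch} W_\mu = \operatorname{ch} V_{m|n}(\mu^t)$ for $\mu\in H$. To upgrade this character equality to an isomorphism of $U_q(\mathfrak{gl}(m|n))$-modules, I would produce an explicit joint highest-weight vector of bi-weight $(\mu^t,\mu)$ in $\bigwedge_q(\C^{m|n}_q\otimes \C^p_q)$ by wedging vectors of the form $v_i\otimes x_j$ whose indices $(i,j)$ fill the Young diagram of $\mu$ along its columns. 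This vector generates a $U_q(\mathfrak{gl}(m|n))$-submodule of $W_\mu$ which is a quotient of $V_{m|n}(\mu^t)$, and the character equality then forces it to exhaust $W_\mu$.

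The main obstacle is that finite-dimensional representations of $U_q(\mathfrak{gl}(m|n))$ are \emph{not} in general completely reducible, so one cannot conclude $W_\mu\cong V_{m|n}(\mu^t)$ from its character alone. The resolution is that the condition $\mu\in H$ places $\mu^t$ in the covariant range where the highest-weight module $V_{m|n}(\mu^t)$ is irreducible and rigid under flat deformation; equivalently, the explicit highest-weight vector constructed above can be verified to generate an irreducible submodule directly, using the $q$-exterior relations to check that it is killed by every $E_i$ and has the correct weight, after which the character count finishes the proof.
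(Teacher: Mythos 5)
Your proof follows the same overall strategy as the paper's: cite the classical (\(q=1\)) super skew Howe decomposition for \(\mathfrak{gl}(m|n)\oplus\mathfrak{gl}(p)\), then invoke Lemma~\ref{lm:flatmodule} to transfer it to the quantum setting. Where you genuinely diverge is in how you justify that \(\bigwedge_q(\C^{m|n}_q\otimes\C^p_q)\) decomposes as a direct sum of tensor products of irreducibles. The paper simply asserts this; you correctly flag that the assertion is not automatic, since \(U_q(\mathfrak{gl}(m|n))\)-mod is not semisimple. Your route---take the isotypic decomposition \(\bigoplus_\mu W_\mu\otimes V_p(\mu)\) under the semisimple \(U_q(\mathfrak{gl}(p))\)-action, compare formal characters against the \(q=1\) specialisation, construct an explicit joint highest-weight vector of bi-weight \((\mu^t,\mu)\) by wedging the \(v_i\otimes x_j\) down the columns of \(\mu\), and then appeal to the irreducibility of the covariant Weyl module \(V_{m|n}(\mu^t)\) for \(\mu\in H\) to conclude \(W_\mu\cong V_{m|n}(\mu^t)\)---is exactly what is needed to close that gap, and arguably a more complete argument than the one printed.

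Two small points of precision worth tightening. First, the cyclic highest-weight submodule \(M\subseteq W_\mu\) generated by your explicit vector is a quotient of the \emph{universal} highest-weight module (the Weyl-type module the paper calls \(V_{m|n}(\mu^t)\)), not of the simple module; you should verify the finiteness relations \(F_i^{(\cdot)}v=0\) to see this, which follow from finite-dimensionality. Second, the character count needs both inequalities: \(\operatorname{ch} M\le\operatorname{ch} W_\mu\) because \(M\subseteq W_\mu\), and \(\operatorname{ch} M\ge\operatorname{ch} V_{m|n}(\mu^t)\) because \(M\), being a nonzero highest-weight module, surjects onto the simple head, which equals \(V_{m|n}(\mu^t)\) precisely because covariant Weyl modules are irreducible in the hook range. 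Making this two-sided bound explicit would replace the somewhat vague phrase ``rigid under flat deformation'' with the actual argument being used.
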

\begin{proof}
By \cite[Theorem 3.3]{Cheng2001}, we have
\begin{equation}\label{eqn:classicaldecomp}
\bigwedge(\C^{m|n}\otimes \C^p) \cong \bigoplus_{\mu \in H} V_{m|n}(\mu^t)\otimes V_p(\mu)
\end{equation}  
as modules over $U(\mathfrak{gl}(p)\oplus \mathfrak{gl}(m|n))$ (note this is the classical universal enveloping algebra, not the quantum one).

The module $\bigwedge_q (\C^{m|n}_q\otimes \C^p_q)$ decomposes into a direct sum of irreducible modules over $U_q(\mathfrak{gl}(m|n)\otimes \mathfrak{gl}(p))$, which are of the form $V\otimes W$, with $V$ an irreducible over $U_q(\mathfrak{gl}(m|n))$ and $W$ an irreducible over $U_q(\mathfrak{gl}(p))$, and both irreducibles are highest-weight modules. But by Lemma \ref{lm:flatmodule}, $\bigwedge_q(\C^{m|n}_q\otimes \C^p_q)$ specialises at $q=1$ to $\bigwedge(\C^{m|n}\otimes \C^p)$. Since the highest-weight modules of $U_q(\mathfrak{gl}(m|n))$ specialise to highest-weight modules of $U(\mathfrak{gl}(m|n))$, and the decomposition into irreducible is uniquely determined by the dimension of the weight-spaces, it follows that the highest-weight modules appearing in the decomposition of $\bigwedge_q(\C^{m|n}_q\otimes \C^p_q)$ correspond to the highest-weight modules appearing in the classical decomposition \ref{eqn:classicaldecomp}, so the result follows.
\end{proof}

\begin{thm}[Skew Howe duality]\label{th:skewhoweduality}
The actions of $U_q(\mathfrak{gl}(p))$ and $U_q(\mathfrak{gl}(m|n))$ on $\bigwedge_q(\C_q^{m|n}\otimes \C_q^p)$ generate each others commutant. As $U_q(\mathfrak{gl}(m|n))$ representations, there is an isomorphism
\[ \bigwedge_q(\C_q^{m|n}\otimes \C_q^p)\cong \left(\bigwedge_q \C_q^{m|n}\right)^{\otimes p} \]
and the $(\lambda_1,\ldots,\lambda_p)$ weight space for the action of $U_q(\mathfrak{gl}(p))$ is identified with
\[ \bigwedge^{\lambda_1}_q\C^{m|n}_q \otimes \cdots \otimes \bigwedge^{\lambda_p}_q\C^{m|n}_q. \]
\end{thm}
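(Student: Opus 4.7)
The plan is to proceed in three steps: first extract a $U_q(\mathfrak{gl}(p))$ Cartan-weight decomposition of $\bigwedge_q(\C^{m|n}_q\otimes\C^p_q)$ that matches the claimed tensor product on the nose, then promote this identification to a $U_q(\mathfrak{gl}(m|n))$-equivariant isomorphism, and finally deduce the double commutant from Lemma~\ref{lm:hookshaped}. For the weight decomposition, the $L_j\in U_q(\mathfrak{gl}(p))$ are group-like and act diagonally on $T(\C^{m|n}_q\otimes\C^p_q)$, so they preserve the ideal $(\operatorname{Sq}^2(\C^{m|n}_q\otimes\C^p_q))$ and grade $\bigwedge_q(\C^{m|n}_q\otimes\C^p_q)$ by $\Z^p$. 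Since $L_j(v_i\otimes x_k)=q^{\delta_{jk}}v_i\otimes x_k$, the $(\lambda_1,\ldots,\lambda_p)$-weight space is spanned by monomials $v_{i_1,k_1}\wedge\cdots\wedge v_{i_l,k_l}$ in which each value $k=j$ occurs exactly $\lambda_j$ times. Putting such monomials into lex order on the second index and using the quadratic relations computed in Lemma~\ref{lm:flatmodule} identifies the $\lambda$-weight space, as a $\C(q)$-vector space, with $\bigwedge^{\lambda_1}_q\C^{m|n}_q\otimes\cdots\otimes\bigwedge^{\lambda_p}_q\C^{m|n}_q$ via
\[
(v_{i_1}\wedge\cdots\wedge v_{i_{\lambda_1}})\otimes\cdots\otimes(v_{j_1}\wedge\cdots\wedge v_{j_{\lambda_p}})\longmapsto v_{i_1,1}\wedge\cdots\wedge v_{i_{\lambda_1},1}\wedge\cdots\wedge v_{j_1,p}\wedge\cdots\wedge v_{j_{\lambda_p},p}.
\]
Well-definedness amounts to observing that the relations between $v_{i,j}$ and $v_{i',j}$ at fixed $j$ are precisely the defining relations of $\bigwedge_q\C^{m|n}_q$, while bijectivity follows from a normal-form argument together with the dimension count of Lemma~\ref{lm:flatmodule} at $q=1$.

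To upgrade this to a $U_q(\mathfrak{gl}(m|n))$-module isomorphism I use that $U_q(\mathfrak{gl}(m|n))$ and $U_q(\mathfrak{gl}(p))$ act on commuting tensor factors of $\C^{m|n}_q\otimes\C^p_q$, so the Cartan-weight decomposition is automatically a decomposition of $U_q(\mathfrak{gl}(m|n))$-modules and equivariance only needs to be checked on the displayed map. For generators $E_i,F_i$ with $i\neq m$ the coproduct Leibniz rule splits over the $j$-blocks of the lex-ordering, reproducing the coproduct-iterated action on the tensor product. The super case $i=m$ is handled the same way, because the supersigns forced by $\Delta(E_m)=E_m\otimes K_m+1\otimes E_m$ agree with those introduced when reordering adjacent $j$-blocks, via the factorisation $R_{\C^{m|n}_q\otimes\C^p_q}=\tau_{23}\circ(R_{\C^{m|n}_q}\otimes R_{\C^p_q})\circ\tau_{23}$ used in Lemma~\ref{lm:flatmodule}. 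This equivariance check is the main obstacle; the cleanest strategy is to reduce it to two local moves, since within a single $j$-block the relations are by construction those of $\bigwedge_q\C^{m|n}_q$, whereas a swap of adjacent $j$-blocks introduces exactly the scalar that the coproduct of $U_q(\mathfrak{gl}(m|n))$ demands for the corresponding tensor factor.

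Finally, Lemma~\ref{lm:hookshaped} provides the multiplicity-free decomposition $\bigwedge_q(\C^{m|n}_q\otimes\C^p_q)\cong\bigoplus_{\mu\in H}V_{m|n}(\mu^t)\otimes V_p(\mu)$ with pairwise non-isomorphic factors on each side. A standard Jacobson-density and Burnside argument then shows that the image of $U_q(\mathfrak{gl}(p))$ inside $\operatorname{End}_{\C(q)}(\bigwedge_q(\C^{m|n}_q\otimes\C^p_q))$ is the full isotypic subalgebra $\bigoplus_{\mu\in H}\operatorname{End}_{\C(q)}V_p(\mu)$, which is exactly the commutant of the image of $U_q(\mathfrak{gl}(m|n))$; the symmetric statement yields the reverse commutant, completing the proof.
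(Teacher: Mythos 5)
Your proposal is correct and follows essentially the same route as the paper's proof. You invoke Lemma~\ref{lm:hookshaped} for the double commutant (the paper does the same, leaving the Jacobson--density/Burnside step implicit), and the map you write down on each $U_q(\mathfrak{gl}(p))$-weight space is precisely the paper's $\phi=\phi_1\wedge\cdots\wedge\phi_p$ with $\phi_j(v_i)=v_i\otimes x_j$. The only real difference in emphasis is that the paper dispatches the $U_q(\mathfrak{gl}(m|n))$-equivariance in one line by observing that each $\phi_j$ is $U_q(\mathfrak{gl}(m|n))$-equivariant and that the multiplication in $\bigwedge_q(\C^{m|n}_q\otimes\C^p_q)$ is a $U_q(\mathfrak{gl}(m|n))$-module map (since the quadratic ideal is generated by a submodule), whereas you flag the iterated-coproduct/supersign check as a nontrivial step to carry out explicitly on the lex normal form; the paper's abstract observation is the cleaner way to close that step, and your ``two local moves'' reduction would amount to the same verification.
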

\begin{proof}
By Lemma \ref{lm:hookshaped}, we have a direct sum decomposition into tensor factors, which shows that the actions of $U_q(\mathfrak{gl}(p))$ and $U_q(\mathfrak{gl}(m|n))$ on $\bigwedge_q(\C_q^{m|n}\otimes \C_q^p)$ generate each others commutant.

To define the isomorphism, we let
\[ \phi_j: \bigwedge_q \C^{m|n}_q \to \bigwedge_q(\C^{m|n}_q\otimes \C^p_q): v_i\mapsto v_i\otimes x_j \]
extended linearly, and then we can define
\[ \phi:\left(\bigwedge_q \C^{m|n}_q \right)^{\otimes p}\to \bigwedge_q(\C^{m|n}_q\otimes \C^p_q) = \phi_1\wedge \phi_2\wedge \cdots \wedge \phi_p. \]
By checking the relations in Lemma \ref{lm:flatmodule}, we can see that this is a well-defined map of $U_q(\mathfrak{gl}(m|n))$ representations, since the wedge product commutes with the $U_q(\mathfrak{gl}(m|n))$-action, and the result is clearly a spanning set with dimension equal to the dimension of $\bigwedge_q(\C^{m|n}_q\otimes \C^p_q)$.

The final statement follows by checking the action of each $L_i$ on the right-hand side of the above isomorphisms.
\end{proof}

\begin{thm}\label{thm:directsumdecomp}
There is an isomorphism
\[ \End_{U_q(\mathfrak{gl}(m|n)} \left(\bigwedge_q(\C^{m|n}_q\otimes \C^p_q)\right)\cong \bigoplus_{\mu \in H} \End_{\C(q)} (V_p(\mu)) \]
where $H$ is the set of $\mu\in \Z^p$ satisfying $\mu_{n+1}\leq m$.
\end{thm}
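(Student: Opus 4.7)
The plan is to combine Lemma \ref{lm:hookshaped} with the double-commutant statement of Theorem \ref{th:skewhoweduality}. By Lemma \ref{lm:hookshaped},
\[ \bigwedge_q(\C^{m|n}_q\otimes \C^p_q)\cong \bigoplus_{\mu\in H} V_{m|n}(\mu^t)\otimes V_p(\mu) \]
as a $U_q(\mathfrak{gl}(m|n))\otimes U_q(\mathfrak{gl}(p))$-module, and by Theorem \ref{th:skewhoweduality} the commutant of the $U_q(\mathfrak{gl}(m|n))$-action on this module is exactly the image of $U_q(\mathfrak{gl}(p))$ inside the endomorphism ring. The proof is then a matter of identifying this image summand by summand.

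The first step I would carry out is to observe that the $V_{m|n}(\mu^t)$ are pairwise non-isomorphic irreducible $U_q(\mathfrak{gl}(m|n))$-modules, since they are highest-weight modules distinguished by their highest weights and the transpose map $\mu\mapsto \mu^t$ is injective on $H$. Consequently the $U_q(\mathfrak{gl}(m|n))$-isotypic decomposition of $\bigwedge_q(\C^{m|n}_q\otimes \C^p_q)$ coincides with the direct-sum decomposition above, so every element of the commutant preserves each summand and acts on $V_{m|n}(\mu^t)\otimes V_p(\mu)$ as something of the form $\id_{V_{m|n}(\mu^t)}\otimes f_\mu$ for some $f_\mu\in \End_{\C(q)}(V_p(\mu))$. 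Next I would argue that every such $f_\mu$ is in fact realised: since $V_p(\mu)$ is an irreducible $U_q(\mathfrak{gl}(p))$-module and finite-dimensional $U_q(\mathfrak{gl}(p))$-representations at generic $q$ are semisimple, Jacobson density (equivalently, the classical Burnside theorem applied to the image of $U_q(\mathfrak{gl}(p))$ in $\End(V_p(\mu))$) shows the image is all of $\End_{\C(q)}(V_p(\mu))$. Summing over $\mu\in H$ then yields the claimed isomorphism of algebras.

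The hard part will be Schur's lemma in the super-algebra setting, because irreducible modules over a super-algebra can have a Clifford-type endomorphism ring of dimension two over the base field (the \emph{type Q} phenomenon), in which case the naive tensor-product form of the endomorphism algebra would be wrong. Routing through Theorem \ref{th:skewhoweduality} is exactly what avoids this subtlety: one identifies the commutant directly with the image of $U_q(\mathfrak{gl}(p))$, whose structure is controlled by the known representation theory of $U_q(\mathfrak{gl}(p))$, rather than by invoking Schur's lemma for the super-algebra $U_q(\mathfrak{gl}(m|n))$ at all.
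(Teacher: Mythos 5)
Your argument is correct in substance, but it takes a different route from the paper. The paper's proof is a one-liner: it applies Lemma \ref{lm:hookshaped} and computes $\End_{U_q(\mathfrak{gl}(m|n))}\bigl(\bigoplus_{\mu\in H} V_{m|n}(\mu^t)\otimes V_p(\mu)\bigr)$ directly, using only that the $V_{m|n}(\mu^t)$ are pairwise non-isomorphic simple $U_q(\mathfrak{gl}(m|n))$-modules with no non-trivial maps between them (a Schur-type argument); Theorem \ref{th:skewhoweduality} is never invoked. You instead quote the double-commutant statement of Theorem \ref{th:skewhoweduality} to identify the commutant with the image of $U_q(\mathfrak{gl}(p))$, and then use a density argument to see that this image fills out $\bigoplus_{\mu\in H}\End_{\C(q)}(V_p(\mu))$. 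That is legitimate given the order of results in the paper, but two caveats are worth recording. First, the claim that this "avoids" super-Schur is largely cosmetic: in this paper the double-commutant property is itself deduced from the decomposition of Lemma \ref{lm:hookshaped} by exactly the Schur-type reasoning you are trying to sidestep, so the subtlety is relocated rather than removed; and in any case the worry is unfounded here, since the $V_{m|n}(\mu^t)$ are highest-weight modules whose highest weight space is one-dimensional and generates the module, so $\End_{U_q(\mathfrak{gl}(m|n))}(V_{m|n}(\mu^t))=\C(q)$ with no "type Q" complication (that phenomenon occurs for the queer superalgebra, not for $\mathfrak{gl}(m|n)$). Second, your density step still needs a Schur-type input on the $\mathfrak{gl}(p)$ side: $\C(q)$ is not algebraically closed, so Burnside does not apply verbatim; one uses Jacobson density together with $\End_{U_q(\mathfrak{gl}(p))}(V_p(\mu))=\C(q)$. (Both your argument and the paper's are equally loose about the fact that for $n\geq 1$ the set $H$ is infinite, where endomorphisms of the infinite direct sum naturally form a product rather than a direct sum; that looseness is inherited from the statement itself.) What the paper's route buys is brevity and logical economy; what yours buys is that the commutant is exhibited concretely as the image of the $U_q(\mathfrak{gl}(p))$-action, which is the perspective exploited later for the fullness statement of Theorem \ref{thm:fullfunctor}.
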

\begin{proof}
By Lemma \ref{lm:hookshaped}, we have
\[\End_{U_q(\mathfrak{gl}(m|n)} \left(\bigwedge_q(\C^{m|n}_q\otimes \C^p_q)\right) \cong \End_{U_q(\mathfrak{gl}(m|n)}\left( \bigoplus_{\mu \in H} V_{m|n}(\mu^t)\otimes V_p(\mu) \right) \cong \bigoplus_{\mu \in H} \End_{\C(q)} (V_p(\mu)) \]
since the $V_{m|n}(\mu^t)$ are simple modules of $U_q(\mathfrak{gl}(m|n))$ with no non-trivial maps between them.
\end{proof}

\begin{example}Take $p=2$ and $(m,n)\not\in \{(1,0),(0,1),(0,0)\}$, and consider $\bigwedge_q^2 (\C^{m|n}_q\otimes \C^2_q)$. This module is $V_{m|n}((1,1))\otimes V_2((2,0))\oplus V_{m|n}((2,0))\otimes V_{2}((1,1))$. Then as a $U_q(\mathfrak{gl}(2))$ module, this is
\[ \xymatrix{\bigwedge_q^0(\C^{m|n}_q)\otimes \bigwedge_q^2(\C^{m|n}_q) \ar@<-.5ex>[r]& \bigwedge_q^1(\C^{m|n}_q)\otimes \bigwedge_q^1(\C^{m|n}_q) \ar@<-.5ex>[l] \ar@<-.5ex>[r] & \bigwedge_q^2(\C^{m|n}_q)\otimes \bigwedge_q^0(\C^{m|n}_q)\ar@<-.5ex>[l]} \]
which is isomorphic to a direct sum $V_2(2,0)\oplus V_2(1,1)$. The arrows denote generators for the spaces of $\C(q)$-linear maps $\End(V_2(2,0))\oplus \End(V_2(1,1))$, noting that $V_2(1,1)$ is a 1-dimensional representation.
\end{example}

\section{Ladder Diagrams}\label{sec:ladderstop}
In this section we define a graphical calculus for an algebra $\dot U_q(\mathfrak{gl}(p))$ closely related to $U_q(\mathfrak{gl}(p))$. This will translate to a graphical calculus on a representation category of modules over $U_q(\mathfrak{gl}(m|n))$.
\subsection{The algebra \texorpdfstring{$\dot U_q (\mathfrak{gl}(p))$}{Uq (gl(p))}}\label{sec:ladders}
We form an algebra $U'_q(\mathfrak{gl}(p))$ by adjoining to $U_q(\mathfrak{gl}(p))$ elements $1_\lambda$ for each $\lambda\in \Z^p$, with the extra relations
\[ 1_\lambda 1_\lambda'=\delta_{\lambda \lambda'}1_\lambda \]
\[ E_i 1_\lambda = 1_{\lambda+\alpha_i} E_i \]
\[ F_i 1_\lambda = 1_{\lambda-\alpha_i} F_i \]
\[ L_i 1_\lambda = q^{\lambda_i} 1_\lambda \]
where $\alpha_i=(0,\ldots,1,-1,\ldots,0)$ with the $1$ in position $i$, and $\lambda_i$ is the $i$th term of $\lambda$.

\begin{definition}We define the non-unital algebra
\[ \dot U_q (\mathfrak{gl}(p)) = \bigoplus_{\lambda,\mu\in \Z^p} 1_\lambda U'_q(\mathfrak{gl}(p)) 1_\mu.\]
\end{definition}

It is often convenient to treat $\dot U_q(\mathfrak{gl}(p))$ as a category, with objects $1_\lambda$ and morphisms $1_\lambda \to 1_\nu$ given by the elements of $1_\nu \dot U_q(\mathfrak{gl}(p)) 1_\lambda$. We shall freely switch between the two viewpoints.

In order to relate the category $\dot U_q(\mathfrak{gl}(p))$ to morphisms on modules over $U_q(\mathfrak{gl}(m|n))$, we will mostly be interested in the following quotient:

\begin{definition}
We define $\dot U_q^{\infty}(\mathfrak{gl}(p))$ to be the quotient of $\dot U_q(\mathfrak{gl}(p))$ by the two-sided ideal generated by the elements $1_\lambda$ that have $\lambda_i<0$ for some $i$.
\end{definition}

Cautis, Kamnitzer and Morrison \cite{Cautis2012} defined \emph{ladder diagrams} to describe morphisms in the category $\dot U^\infty_q(\mathfrak{gl}(p))$.

\begin{definition}
A \emph{ladder} with $p$ \emph{uprights} is a diagram in $[0,1]\times [0,1]$ with $p$ oriented vertical lines connecting the bottom edge to the top edge with horizontal \emph{rungs} joining adjacent uprights. Each line segment is labelled with a natural number such that the algebraic sum of labels at a trivalent vertex is $0$.
\end{definition}

To relate this to $\dot U_q(\mathfrak{gl}(p))$, we associate the following ladders to morphisms in $\dot U_q(\mathfrak{gl}(p))$:

\[E^{(r)}_i1_{\lambda} \mapsto \begin{tikzpicture}[baseline=-0.65ex]
\draw (-0.75,-1) -- (-0.75,1);
\draw (0.75,-1) -- (0.75,1);
\draw (-0.75,0.25) -- (0.75,-0.25);
\draw (-2,-1) -- (-2,1);
\draw (2,-1) -- (2,1);
\draw (0,0.3) node {$r$};
\draw (-1,-1.25) node {$\lambda_i$};
\draw (-1,1.25) node {$\lambda_i+r$};
\draw (-1.4,0) node {$\cdots$};
\draw (1.4,0) node {$\cdots$};
\draw (1,-1.25) node {$\lambda_{i+1}$};
\draw (1,1.25) node {$\lambda_{i+1}-r$};
\draw (-2,-1.25) node {$\lambda_1$};
\draw (2,-1.25) node {$\lambda_m$};
\draw (-2,1.25) node {$\lambda_1$};
\draw (2,1.25) node {$\lambda_m$};
\end{tikzpicture} \]

\[ F^{(r)}_i 1_{\lambda}\mapsto \begin{tikzpicture}[baseline=-0.65ex]
\draw (-0.75,-1) -- (-0.75,1);
\draw (0.75,-1) -- (0.75,1);
\draw (-0.75,-0.25) -- (0.75,0.25);
\draw (-2,-1) -- (-2,1);
\draw (2,-1) -- (2,1);
\draw (0,0.3) node {$r$};
\draw (-1,-1.25) node {$\lambda_i$};
\draw (-1,1.25) node {$\lambda_i-r$};
\draw (-1.4,0) node {$\cdots$};
\draw (1.4,0) node {$\cdots$};
\draw (1,-1.25) node {$\lambda_{i+1}$};
\draw (1,1.25) node {$\lambda_{i+1}+r$};
\draw (-2,-1.25) node {$\lambda_1$};
\draw (2,-1.25) node {$\lambda_m$};
\draw (-2,1.25) node {$\lambda_1$};
\draw (2,1.25) node {$\lambda_m$};
\end{tikzpicture}  \]

Then we define the following relations on ladders:
\[ \begin{tikzpicture}[baseline=-0.65ex]
\draw (-1,-1) -- (-1,1);
\draw (0,-1) -- (0,1);
\draw (1,-1) -- (1,1);
\draw (-1,-0.6) -- (0,-0.2);
\draw (1,0.2) -- (0,0.6);
\draw (-1,-1.25) node {$\lambda_1$};
\draw (0,-1.25) node {$\lambda_2$};
\draw (1,-1.25) node {$\lambda_3$};
\draw (-1.6,0.75) node {$\lambda_1-r$};
\draw (0,1.25) node {$\lambda_2+r+s$};
\draw (1.6,0.75) node {$\lambda_3-s$};
\draw (0.5,0.2) node {$s$};
\draw (-0.5,-0.2) node {$r$};
\end{tikzpicture} =  \begin{tikzpicture}[baseline=-0.65ex]
\draw (-1,-1) -- (-1,1);
\draw (0,-1) -- (0,1);
\draw (1,-1) -- (1,1);
\draw (-1,0.2) -- (0,0.6);
\draw (1,-0.6) -- (0,-0.2);
\draw (-1,-1.25) node {$\lambda_1$};
\draw (0,-1.25) node {$\lambda_2$};
\draw (1,-1.25) node {$\lambda_3$};
\draw (-1.6,0.75) node {$\lambda_1-r$};
\draw (0,1.25) node {$\lambda_2+r+s$};
\draw (1.6,0.75) node {$\lambda_3-s$};
\draw (0.5,-0.2) node {$s$};
\draw (-0.5,0.2) node {$r$};
\end{tikzpicture} \]

\[ \begin{tikzpicture}[baseline=-0.65ex]
\draw (-1,-1) -- (-1,1);
\draw (0,-1) -- (0,1);
\draw (1,-1) -- (1,1);
\draw (-1,-0.2) -- (0,-0.6);
\draw (1,0.6) -- (0,0.2);
\draw (-1,-1.25) node {$\lambda_1$};
\draw (0,-1.25) node {$\lambda_2$};
\draw (1,-1.25) node {$\lambda_3$};
\draw (-1.6,0.75) node {$\lambda_1+r$};
\draw (0,1.25) node {$\lambda_2-r-s$};
\draw (1.6,0.75) node {$\lambda_3+s$};
\draw (0.5,0.2) node {$s$};
\draw (-0.5,-0.2) node {$r$};
\end{tikzpicture} =  \begin{tikzpicture}[baseline=-0.65ex]
\draw (-1,-1) -- (-1,1);
\draw (0,-1) -- (0,1);
\draw (1,-1) -- (1,1);
\draw (-1,0.6) -- (0,0.2);
\draw (1,-0.2) -- (0,-0.6);
\draw (-1,-1.25) node {$\lambda_1$};
\draw (0,-1.25) node {$\lambda_2$};
\draw (1,-1.25) node {$\lambda_3$};
\draw (-1.6,0.75) node {$\lambda_1+r$};
\draw (0,1.25) node {$\lambda_2-r-s$};
\draw (1.6,0.75) node {$\lambda_3+s$};
\draw (0.5,-0.2) node {$s$};
\draw (-0.5,0.2) node {$r$};
\end{tikzpicture} \]

\[ \begin{tikzpicture}[baseline=-0.65ex]
\draw (-0.5,-1) -- (-0.5,1);
\draw (0.5,-1) -- (0.5,1);
\draw (-0.5,-0.6) -- (0.5,-0.2);
\draw (-0.5,0.2) -- (0.5,0.6);
\draw (-0.5,-1.25) node {$\lambda_1$};
\draw (0.5,-1.25) node {$\lambda_2$};
\draw (-1,1.25) node {$\lambda_1-r-s$};
\draw (1,1.25) node {$\lambda_2+r+s$};
\draw (0,-0.6) node {$r$};
\draw (0,0.6) node {$s$};
\end{tikzpicture} = \chuse{r+s}{s}
\begin{tikzpicture}[baseline=-0.65ex]
\draw (-0.5,-1) -- (-0.5,1);
\draw (0.5,-1) -- (0.5,1);
\draw (-0.5,-0.2) -- (0.5,0.2);
\draw (-0.5,-1.25) node {$\lambda_1$};
\draw (0.5,-1.25) node {$\lambda_2$};
\draw (-1,1.25) node {$\lambda_1-r-s$};
\draw (1,1.25) node {$\lambda_2+r+s$};
\draw (0,-0.3) node {$r+s$};
\end{tikzpicture}
 \]
 
\[ \begin{tikzpicture}[baseline=-0.65ex]
\draw (-0.5,-1) -- (-0.5,1);
\draw (0.5,-1) -- (0.5,1);
\draw (-0.5,-0.6) -- (0.5,-0.2);
\draw (0.5,0.2) -- (-0.5,0.6);
\draw (-0.5,-1.25) node {$\lambda_1$};
\draw (0.5,-1.25) node {$\lambda_2$};
\draw (-1,1.25) node {$\lambda_1-s+r$};
\draw (1,1.25) node {$\lambda_2+s-r$};
\draw (0,-0.6) node {$s$};
\draw (0,0.6) node {$r$};
\draw (-1.1,0) node {$\lambda_1-s$};
\draw (1.1,0) node {$\lambda_2+s$};
\end{tikzpicture} = \sum_t \chuse{\lambda_1-\lambda_2+r-s}{t}
\begin{tikzpicture}[baseline=-0.65ex]
\draw (-0.5,-1) -- (-0.5,1);
\draw (0.5,-1) -- (0.5,1);
\draw (-0.5,-0.2) -- (0.5,-0.6);
\draw (0.5,0.6) -- (-0.5,0.2);
\draw (-0.5,-1.25) node {$\lambda_1$};
\draw (0.5,-1.25) node {$\lambda_2$};
\draw (-1,1.25) node {$\lambda_1-r+s$};
\draw (1,1.25) node {$\lambda_2+r-s$};
\draw (0,-0.7) node {$r-t$};
\draw (0,0.7) node {$s-t$};
\draw (-1.4,0) node {$\lambda_1+r-t$};
\draw (1.4,0) node {$\lambda_2-r+t$};
\end{tikzpicture}
\]

\[ \begin{tikzpicture}[baseline=-0.65ex]
\draw (-1,-1.4) -- (-1,1.4);
\draw (0,-1.4) -- (0,1.4);
\draw (1,-1.4) -- (1,1.4);
\draw (-1,-1) -- (0,-0.6);
\draw (-1,-0.2) -- (0,0.2);
\draw (0,0.6) -- (1,1);
\draw (-1,-1.65) node {$\lambda_1$};
\draw (0,-1.65) node {$\lambda_2$};
\draw (1,-1.65) node {$\lambda_3$};
\draw (-0.5,-1) node {$1$};
\draw (-0.5,0.2) node {$1$};
\draw (0.5,1) node {$1$};
\end{tikzpicture}
- [2] 
\begin{tikzpicture}[baseline=-0.65ex]
\draw (-1,-1.4) -- (-1,1.4);
\draw (0,-1.4) -- (0,1.4);
\draw (1,-1.4) -- (1,1.4);
\draw (-1,-1) -- (0,-0.6);
\draw (0,-0.2) -- (1,0.2);
\draw (-1,0.6) -- (0,1);
\draw (-1,-1.65) node {$\lambda_1$};
\draw (0,-1.65) node {$\lambda_2$};
\draw (1,-1.65) node {$\lambda_3$};
\draw (-0.5,-1) node {$1$};
\draw (0.5,0.2) node {$1$};
\draw (-0.5,1) node {$1$};
\end{tikzpicture} + 
\begin{tikzpicture}[baseline=-0.65ex]
\draw (-1,-1.4) -- (-1,1.4);
\draw (0,-1.4) -- (0,1.4);
\draw (1,-1.4) -- (1,1.4);
\draw (0,-1) -- (1,-0.6);
\draw (-1,-0.2) -- (0,0.2);
\draw (-1,0.6) -- (0,1);
\draw (-1,-1.65) node {$\lambda_1$};
\draw (0,-1.65) node {$\lambda_2$};
\draw (1,-1.65) node {$\lambda_3$};
\draw (0.5,-1) node {$1$};
\draw (-0.5,0.2) node {$1$};
\draw (-0.5,1) node {$1$};
\end{tikzpicture} = 0
\]
\[\begin{tikzpicture}[baseline=-0.65ex]
\draw (-0.5,-1) -- (-0.5,1);
\draw (0.5,-1) -- (0.5,1);
\draw (-0.5,-0.6) -- (0.5,-0.6);
\draw (1.5,-1) -- (1.5,1);
\draw (2.5,-1) -- (2.5,1);
\draw (1.5,0.6) -- (2.5,0.6);
\draw (-0.5,-1.25) node {$k_1$};
\draw (0.5,-1.25) node {$k_2$};
\draw (1.5,-1.25) node {$k_3$};
\draw (2.5,-1.25) node {$k_4$};
\draw (0,-0.6) node [label=above:$r$] {};
\draw (2,0.6) node [label=above:$s$] {};
\draw (-1,1.25) node {};
\draw (1,1.25) node {};
\draw (-1.1,0) node {};
\draw (1.1,0) node {};
\draw (1,0) node {$\cdots$};
\end{tikzpicture} =
\begin{tikzpicture}[baseline=-0.65ex]
\draw (-0.5,-1) -- (-0.5,1);
\draw (0.5,-1) -- (0.5,1);
\draw (-0.5,0.6) -- (0.5,0.6);
\draw (1.5,-1) -- (1.5,1);
\draw (2.5,-1) -- (2.5,1);
\draw (1.5,-0.6) -- (2.5,-0.6);
\draw (-0.5,-1.25) node {$k_1$};
\draw (0.5,-1.25) node {$k_2$};
\draw (1.5,-1.25) node {$k_3$};
\draw (2.5,-1.25) node {$k_4$};
\draw (0,0.6) node [label=above:$r$] {};
\draw (2,-0.6) node [label=above:$s$] {};
\draw (1,1.25) node {};
\draw (1.1,0) node {};
\draw (1,0) node {$\cdots$};
\end{tikzpicture}
\]
with either orientation on each of the rungs in the last relation as long as the two $r$-coloured rungs have the same orientation, and similarly for the two $s$-coloured rungs. We also take mirror images of the third and fifth relations, and include all relations with arbitrarily many uprights on each side.

Since these relations were imposed to match the relations on morphisms in $\dot U_q(\mathfrak{gl}(p))$, we get the following:
\begin{lem}\label{lem:ladders}
The category $\dot U_q^\infty(\mathfrak{gl}(p))$ is equivalent to the category of ladders on $p$ uprights.
\end{lem}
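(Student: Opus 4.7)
The plan is to construct a functor $\Phi$ from the category of ladders on $p$ uprights to $\dot U_q^\infty(\mathfrak{gl}(p))$ by sending an object (a sequence of nonnegative integer labels at the bottom) to the corresponding idempotent $1_\lambda$, an upward rung of label $r$ between uprights $i$ and $i{+}1$ over bottom labels $\lambda$ to the morphism $E_i^{(r)} 1_\lambda$, and a downward rung to $F_i^{(r)} 1_\lambda$. Vertical uprights are identity morphisms, horizontal juxtaposition concatenates weights, and vertical stacking is composition.

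To check that $\Phi$ is well-defined, I would match each of the displayed ladder relations against an identity in $\dot U_q^\infty(\mathfrak{gl}(p))$. The first two are the distant commutations $E_i^{(r)} E_j^{(s)} = E_j^{(s)} E_i^{(r)}$ and its $F$-analogue when $|i-j|>1$. The third is divided-power fusion $E_i^{(r)} E_i^{(s)} 1_\lambda = \chuse{r+s}{s} E_i^{(r+s)} 1_\lambda$. The fourth is the divided-power form of the $EF$--$FE$ commutator, namely Lusztig's identity $E_i^{(r)} F_i^{(s)} 1_\lambda = \sum_t \chuse{\lambda_i-\lambda_{i+1}+r-s}{t} F_i^{(s-t)} E_i^{(r-t)} 1_\lambda$, derived by induction from the basic relation $E_iF_i 1_\lambda - F_iE_i 1_\lambda = [\lambda_i-\lambda_{i+1}] 1_\lambda$. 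The fifth is the quantum Serre relation in its divided-power form, and the last is another distant-commutativity statement now involving mixed orientations of the rungs.

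Fullness and essential surjectivity then follow because $\dot U_q(\mathfrak{gl}(p))$ is generated as a category by the $E_i^{(r)} 1_\lambda$, $F_i^{(r)} 1_\lambda$ and $1_\lambda$ (the $L_i$ act by the scalar $q^{\lambda_i}$ on $1_\lambda$ and so are absorbed into the idempotents), and passing to $\dot U_q^\infty$ kills exactly the $1_\lambda$ with some $\lambda_i<0$, matching the implicit nonnegativity of ladder labels. The main obstacle is faithfulness: one needs the listed relations to suffice for identifying any two ladders representing the same morphism in $\dot U_q^\infty(\mathfrak{gl}(p))$. I would reduce this to the analogous $\mathfrak{sl}(p)$ statement of Cautis, Kamnitzer and Morrison \cite{Cautis2012}: the $\mathfrak{gl}(p)$ weight lattice refines the $\mathfrak{sl}(p)$ weight lattice only by the central direction $\sum_i \lambda_i$, an invariant preserved by every ladder generator (each rung moves an equal amount between adjacent uprights). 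Hence $1_\nu \dot U_q^\infty(\mathfrak{gl}(p)) 1_\lambda$ is zero unless $\sum_i \lambda_i = \sum_i\nu_i$ and otherwise agrees canonically with the corresponding $\mathfrak{sl}(p)$ hom space (and likewise on the ladder side); the \cite{Cautis2012} faithfulness then transfers directly, with the standard normal-form reduction---ordering rungs bottom-to-top and left-to-right using the listed relations---providing the bijection of bases.
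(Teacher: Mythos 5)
Your overall strategy --- build a functor from the ladder category, check well-definedness by matching each displayed relation to a quantum group identity, argue fullness by the generation of $\dot U_q(\mathfrak{gl}(p))$ by divided powers, and defer faithfulness to the $\mathfrak{sl}(p)$ result of Cautis--Kamnitzer--Morrison --- is exactly what the paper has in mind. The paper's own ``proof'' is a single sentence asserting that the ladder relations were chosen to mirror relations in $\dot U_q(\mathfrak{gl}(p))$, so what you have written is a reasonable fleshing-out of the same argument, and the reduction of the $\mathfrak{gl}(p)$ case to the $\mathfrak{sl}(p)$ case via the preserved central charge $\sum_i \lambda_i$ is the right idea for the completeness of the relations.

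However, you have misidentified the first two ladder relations. Look at the weight labels in those two pictures: in the first, upright $1$ decreases by $r$, upright $2$ increases by $r+s$, and upright $3$ decreases by $s$. The rung between uprights $1$--$2$ moves weight to the right (an $F_1^{(r)}$), and the rung between $2$--$3$ moves weight to the left (an $E_2^{(s)}$). So the first relation is $E_{2}^{(s)}F_{1}^{(r)}1_\lambda = F_1^{(r)}E_2^{(s)}1_\lambda$, i.e.\ the mixed relation $E_iF_j=F_jE_i$ for \emph{adjacent} $i,j$ with $i\neq j$, and likewise the second is its $F_{i+1}^{(s)}E_i^{(r)}=E_i^{(r)}F_{i+1}^{(s)}$ counterpart. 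Neither is a distant commutation $E_i^{(r)}E_j^{(s)}=E_j^{(s)}E_i^{(r)}$ for $|i-j|>1$; those distant cases are in fact covered by the last displayed relation (with both rungs of matching orientation). The error does not invalidate the strategy, but the catalogue of verifications you propose to carry out would, as written, check the wrong identities against these two pictures and leave the genuine $E_iF_j$ ($i\ne j$) case unaccounted for.
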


\subsection{Relationship with modules of \texorpdfstring{$U_q(\mathfrak{gl}(m|n))$}{Uq(gl(m|n))}}
We want a full description of the following category of $U_q(\mathfrak{gl}(m|n))$-modules:

\begin{definition}
We let $\operatorname{Rep}(\mathfrak{gl}(m|n))$ be the category monoidally generated by $\bigwedge_q^{k}(\C^{m|n}_q)$ for all $k$. That is, objects in the category are direct sums of
\[ \bigwedge_q^{k_1}(\C^{m|n}_q)\otimes \cdots \otimes\bigwedge_q^{k_p}(\C^{m|n}_q) \]
for all $(k_1,\ldots,k_p)\in \N^p$, and all $p\in \N$. Morphisms in the category are all $U_q(\mathfrak{gl}(m|n))$-module morphisms.
\end{definition}

As a major corollary to skew Howe duality (Theorem \ref{th:skewhoweduality}) we have the following:

\begin{thm}\label{thm:fullfunctor}
There is a full functor
\[ \dot U_q^\infty(\mathfrak{gl}(p)) \to \operatorname{Rep}(\mathfrak{gl}(m|n)) \]\end{thm}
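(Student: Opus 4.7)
The plan is to build the functor directly from the skew Howe action established in Theorem \ref{th:skewhoweduality}, and then deduce fullness from the commutant statement and the decomposition in Theorem \ref{thm:directsumdecomp}.

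First, I would define the functor on objects. Using the isomorphism
\[ \bigwedge_q(\C^{m|n}_q\otimes \C^p_q) \cong \bigoplus_{\lambda \in \Z^p} \bigwedge_q^{\lambda_1}(\C^{m|n}_q)\otimes \cdots \otimes \bigwedge_q^{\lambda_p}(\C^{m|n}_q) \]
where the right-hand side is the weight decomposition under $U_q(\mathfrak{gl}(p))$, send $1_\lambda$ to the weight-$\lambda$ summand when every $\lambda_i\geq 0$, and to the zero object otherwise. The quotient defining $\dot U_q^\infty(\mathfrak{gl}(p))$ is exactly tailored to make this assignment descend, since every non-negative weight space sits inside $\bigwedge_q(\C^{m|n}_q\otimes \C^p_q)$ while the others are zero.

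Next, for morphisms, an element $x\in 1_\nu \dot U_q(\mathfrak{gl}(p))1_\lambda$ acts on $\bigwedge_q(\C^{m|n}_q\otimes \C^p_q)$ and sends the weight-$\lambda$ space into the weight-$\nu$ space; this defines a linear map between the associated tensor products. Because the $U_q(\mathfrak{gl}(p))$ action commutes with the $U_q(\mathfrak{gl}(m|n))$ action (Theorem \ref{th:skewhoweduality}), this map is a morphism in $\operatorname{Rep}(\mathfrak{gl}(m|n))$. Functoriality is automatic: composition of morphisms in $\dot U_q^\infty(\mathfrak{gl}(p))$ corresponds to composition of operators on $\bigwedge_q(\C^{m|n}_q\otimes \C^p_q)$, and identities map to identities.

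The main step is fullness, which I would argue as follows. Any morphism
\[ \bigwedge_q^{\lambda_1}(\C^{m|n}_q)\otimes \cdots \otimes \bigwedge_q^{\lambda_p}(\C^{m|n}_q) \longrightarrow \bigwedge_q^{\nu_1}(\C^{m|n}_q)\otimes \cdots \otimes \bigwedge_q^{\nu_p}(\C^{m|n}_q) \]
in $\operatorname{Rep}(\mathfrak{gl}(m|n))$ extends by zero on all other weight spaces to a $U_q(\mathfrak{gl}(m|n))$-linear endomorphism of $\bigwedge_q(\C^{m|n}_q\otimes \C^p_q)$. By Theorem \ref{thm:directsumdecomp}, this endomorphism algebra is $\bigoplus_{\mu\in H}\End_{\C(q)}(V_p(\mu))$, and Theorem \ref{th:skewhoweduality} tells us the image of $U_q(\mathfrak{gl}(p))$ is the full commutant of $U_q(\mathfrak{gl}(m|n))$. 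Hence the original morphism lifts to an element of $U_q(\mathfrak{gl}(p))$, and after multiplying on the left by $1_\nu$ and on the right by $1_\lambda$ in $\dot U_q(\mathfrak{gl}(p))$, to an element of $1_\nu \dot U_q^\infty(\mathfrak{gl}(p))1_\lambda$ mapping to it.

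The only subtlety I anticipate is interpreting "generates the commutant" in Theorem \ref{th:skewhoweduality} strongly enough to give surjectivity onto $\bigoplus_{\mu\in H}\End_{\C(q)}(V_p(\mu))$: a priori the action of $U_q(\mathfrak{gl}(p))$ produces only a subalgebra of this sum. This is resolved by the Jacobson density theorem together with the fact that each $V_p(\mu)$ is a simple $U_q(\mathfrak{gl}(p))$-module and the $V_p(\mu)$ for distinct $\mu$ are non-isomorphic, so the image is dense in, and hence equal to, the finite-dimensional semisimple algebra $\bigoplus_{\mu\in H}\End_{\C(q)}(V_p(\mu))$. This density step is the one place where the argument uses more than the bare duality statement, so it is worth recording carefully.
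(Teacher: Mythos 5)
Your proposal is correct and follows essentially the same route as the paper: define the functor on objects by weight spaces of $\bigwedge_q(\C^{m|n}_q\otimes\C^p_q)$ and on morphisms by the skew Howe action, then deduce fullness from the commutant statement in Theorem \ref{th:skewhoweduality}. Your explicit density/semisimplicity argument just unpacks what "generates the commutant" already means once one knows the decomposition in Lemma \ref{lm:hookshaped} (the image of an algebra map is a subalgebra, and distinct simples force surjection onto $\bigoplus_{\mu\in H}\End_{\C(q)}(V_p(\mu))$), so it is an elaboration of the paper's appeal to Theorem \ref{th:skewhoweduality} rather than a different argument.
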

\begin{proof}
The functor is defined by sending $1_\lambda$ to $\bigwedge_q^{\lambda_1}(\C^{m|n}_q)\otimes \bigwedge_q^{\lambda_2}(\C^{m|n}_q)\otimes\cdots\otimes \bigwedge_q^{\lambda_p}(\C^{m|n}_q)$ and sending
\[ 1_\nu \dot U_q(\mathfrak{gl}(p)) 1_\lambda \to \operatorname{Hom}_{U_q(\mathfrak{gl}(m|n))}\left( \bigwedge_q^{\lambda_1}(\C^{m|n}_q)\otimes \cdots\otimes \bigwedge_q^{\lambda_p}(\C^{m|n}_q), \bigwedge_q^{\nu_1}(\C^{m|n}_q)\otimes \cdots\otimes \bigwedge_q^{\nu_p}(\C^{m|n}_q)\right) \]
by the action of $U_q(\mathfrak{gl}(p))$ in Theorem \ref{th:skewhoweduality}. Since this action generates the commutant of the action of $U_q(\mathfrak{gl}(m|n))$, it follows that this functor is full.
\end{proof}

\subsection{Extra \texorpdfstring{$\mathfrak{gl}(m|n)$}{gl(m|n))} relations on ladder diagrams}
The functor in Theorem \ref{thm:fullfunctor} is not faithful for all $p$. However, in the special case $n=0$ (corresponding to ordinary Lie algebras $\mathfrak{gl}(m)$, the kernel is easy to describe: it is generated by morphisms that factor through a weight $1_\lambda$ with $\lambda_i>m$ for some $i$. This is proved in \cite{Cautis2012}, and gives a complete description of the categories $\operatorname{Rep}(\mathfrak{gl}(m))$ and $ \operatorname{Rep}(\mathfrak{sl}(m))$, since all morphisms can be simply given uniquely by ladders with colours bounded above by $m$.

In \cite{Grant2014}, we also give a description of the kernel in the case $m=n=1$, which turns out to be a little more complicated. For general $m,n$ it seems very difficult to give closed formulas for the extra relations (in terms of ladder diagrams), but we can describe how they arise.

\begin{lem}\label{algiso}
Let $\lambda$ be a dominant weight, and let $L(\lambda)$ be the set of all dominant weights dominated by $\lambda$. Then there is an isomorphism of algebras
\[ \dot{U}_q(\mathfrak{gl}(p))/I_{\lambda} \to \bigoplus_{l\in L(\lambda)} \operatorname{End}_{\C(q)}(V_p(l)). \]
\end{lem}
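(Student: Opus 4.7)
The strategy is to realize the right-hand side as the image of a natural action of $\dot U_q(\mathfrak{gl}(p))$, with $I_\lambda$ identified as its kernel; the content then reduces to a standard semisimplicity argument over $\C(q)$.

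I would first define the algebra homomorphism
\[ \phi : \dot U_q(\mathfrak{gl}(p)) \longrightarrow \bigoplus_{l \in L(\lambda)} \operatorname{End}_{\C(q)}(V_p(l)) \]
as the direct sum of the actions of $\dot U_q(\mathfrak{gl}(p))$ on the various $V_p(l)$. Well-definedness is automatic: each $V_p(l)$ is a weight module, so the idempotent $1_\mu$ acts as the projector onto the weight space $V_p(l)_\mu$, and all defining relations of $\dot U_q(\mathfrak{gl}(p))$ are respected. The kernel of $\phi$ equals $I_\lambda$ (this being either a matter of definition or a small unpacking of the description of $I_\lambda$ in terms of factorisations through bad weights), so the induced map on the quotient is injective.

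For surjectivity I would invoke two standard inputs. First, each $V_p(l)$ is a finite-dimensional simple $U_q(\mathfrak{gl}(p))$-module, so Burnside's theorem implies that the image of $\dot U_q(\mathfrak{gl}(p))$ in each summand $\operatorname{End}_{\C(q)}(V_p(l))$ is the full endomorphism algebra. Second, the $V_p(l)$ for $l \in L(\lambda)$ are pairwise non-isomorphic, since they have distinct highest weights; hence by Jacobson density (equivalently, Schur's lemma combined with complete reducibility of finite-dimensional $U_q(\mathfrak{gl}(p))$-modules) the image of $\phi$ contains orthogonal idempotents separating the finitely many summands. Combining these two observations yields surjectivity of $\phi$.

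The main obstacle is the separation-of-summands step, complicated slightly by $\dot U_q(\mathfrak{gl}(p))$ being non-unital. The key remark is that for each fixed $l$ only the finitely many $1_\mu$ indexed by weights of $V_p(l)$ act non-trivially on $V_p(l)$, so their finite sum already exists in $\dot U_q(\mathfrak{gl}(p))$ and acts as the identity on $V_p(l)$. Any separating idempotent produced by Jacobson density can therefore be represented by an honest element of $\dot U_q(\mathfrak{gl}(p))$, and once this is in place the remaining checks are routine consequences of the classical semisimple theory of highest-weight modules over $U_q(\mathfrak{gl}(p))$ in characteristic zero.
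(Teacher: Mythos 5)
The paper's proof is a one-line citation to Lemma~4.4.2 of Cautis--Kamnitzer--Morrison, so it gives no argument at all; your proposal supplies a genuine direct proof, and the one you give is essentially the right one. The route via the density theorem / Artin--Wedderburn applied to the faithful finite-dimensional semisimple module $\bigoplus_{l\in L(\lambda)}V_p(l)$ is standard and sound, and your observation about non-unitality --- that one can pass to the unital idempotent truncation $e\,\dot U_q(\mathfrak{gl}(p))\,e$, where $e$ is the finite sum of the $1_\mu$ with $\mu$ a weight of some $V_p(l)$ --- is exactly the fix needed to make the classical machinery apply. Two points are worth making explicit rather than leaving implicit. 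First, both the Burnside step and the separation of summands require that each $V_p(l)$ be \emph{absolutely} simple, i.e.\ that $\operatorname{End}_{\dot U_q(\mathfrak{gl}(p))}(V_p(l))\cong\C(q)$, since $\C(q)$ is not algebraically closed; this holds because the highest-weight space of $V_p(l)$ is one-dimensional and any intertwiner is determined by its scalar action there. Second, your identification $\ker\phi=I_\lambda$ is left as ``a matter of definition or a small unpacking''; since the paper never defines $I_\lambda$ (it is inherited from CKM), this is defensible, but it is genuinely the only place where the proof leans on the external reference rather than on first principles. Compared to the paper, your approach trades the citation for an explicit semisimplicity argument, which is more informative and makes clear exactly which structural facts (finiteness of $L(\lambda)$, finite-dimensionality and absolute simplicity of the $V_p(l)$, pairwise non-isomorphism) the statement really rests on.
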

\begin{proof}
This is Lemma 4.4.2 in \cite{Cautis2012}.
\end{proof}

Note that we have $\dot U_q^\infty (\mathfrak{gl}(p))\cong \bigoplus_{K\in \N}\dot{U}_q(\mathfrak{gl}(p))/I_{(K,0,\ldots,0)}$. Hence Lemma \ref{algiso} implies that there exists a system of orthogonal central idempotents $e_l\in \dot{U}^\infty_q(\mathfrak{gl}(p))$ corresponding to the direct summation on the right hand side.

By Theorem \ref{thm:directsumdecomp}, we have
\[ \operatorname{End}_{U_q(\mathfrak{gl}(m|n))}\left( \bigwedge_q(\C^{m|n}_q\otimes \C^p_q)\right) \cong \bigoplus_{\mu\in H} \operatorname{End}_{\C(q)}(V_p(\mu)). \]
Thus there is a map
\[ \dot{U}^\infty_q(\mathfrak{gl}(p)) \to \operatorname{End}_{U_q(\mathfrak{gl}(m|n))}\left( \bigwedge_q(\C^{m|n}_q\otimes \C^p_q)\right) \]
which simply corresponds to projection by $\sum_{l\in H} e_l$.

By Theorem \ref{algiso}, we have a functor
\[ \dot U_q^\infty(\mathfrak{gl}(p))\to \operatorname{Rep}(\mathfrak{gl}(m|n)) \]
which factors through
\[ \dot U_q^\infty (\mathfrak{gl}(p))\sum_{\mu\in H}e_\mu \]
such that the induced functor
\[ \dot U_q^\infty (\mathfrak{gl}(p))\sum_{\mu\in H}e_\mu \to \operatorname{Rep}(\mathfrak{gl}(m|n)) \]
is full and faithful. This gives us our desired full description of the relations on 
$\operatorname{Rep}(\mathfrak{gl}(m|n))$,
as any morphism in $\operatorname{Rep}(\mathfrak{gl}(m|n))$ can be described by ladder diagrams. This description is unique up to the ladder relations in Section \ref{sec:ladders}, and the relation that $\sum_{\mu\in H}e_\mu$ is the identity on $\operatorname{Rep}(\mathfrak{gl}(m|n))$.

\begin{definition}
We let $\dot U_q^{(m|n)}(\mathfrak{gl}(p))$ be the category $\dot U_q^\infty (\mathfrak{gl}(p))\sum_{\mu\in H}e_\mu$.
\end{definition}

\begin{thm}\label{thm:descriptionOfRep}
The functor
\[ \dot U_q^{(m|n)}(\mathfrak{gl}(p)) \to \operatorname{Rep}(\mathfrak{gl}(m|n)) \]
is full and faithful, and the induced functor
\[ \bigoplus_{p=2}^\infty \dot U_q^{(m|n)}(\mathfrak{gl}(p)) \to \operatorname{Rep}(\mathfrak{gl}(m|n)) \]
is an equivalence of categories.
\end{thm}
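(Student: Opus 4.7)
The plan is to assemble the algebraic decompositions already established. Lemma \ref{algiso}, applied to an exhaustive family of dominant weights, yields a block decomposition
\[ \dot U_q^\infty(\mathfrak{gl}(p)) \cong \bigoplus_{\mu \text{ dominant}} \End_{\C(q)}(V_p(\mu)), \]
with central orthogonal idempotents $e_\mu$ projecting onto each block. Theorem \ref{thm:directsumdecomp} identifies $\End_{U_q(\mathfrak{gl}(m|n))}\bigl(\bigwedge_q(\C^{m|n}_q\otimes \C^p_q)\bigr)$ with the restricted sum $\bigoplus_{\mu\in H}\End_{\C(q)}(V_p(\mu))$, so the functor of Theorem \ref{thm:fullfunctor}, restricted to the total endomorphism algebra, is precisely projection onto the $H$-summands. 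Its kernel is therefore the ideal generated by $\{e_\mu : \mu\notin H\}$, which is exactly the kernel of the quotient $\dot U_q^\infty(\mathfrak{gl}(p))\twoheadrightarrow \dot U_q^{(m|n)}(\mathfrak{gl}(p))$. Consequently the induced functor is faithful, while fullness is inherited from Theorem \ref{thm:fullfunctor}.

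To promote the equality of total endomorphism algebras to faithfulness on individual hom-spaces, I would use Theorem \ref{th:skewhoweduality}: the idempotent $1_\lambda\in \dot U_q(\mathfrak{gl}(p))$ acts on $\bigwedge_q(\C^{m|n}_q\otimes \C^p_q)$ as projection onto the $U_q(\mathfrak{gl}(m|n))$-summand $\bigwedge^{\lambda_1}_q\C^{m|n}_q\otimes\cdots\otimes \bigwedge^{\lambda_p}_q\C^{m|n}_q$. Hence the hom-space $\operatorname{Hom}_{U_q(\mathfrak{gl}(m|n))}\bigl(\bigwedge^{\lambda_1}_q\C^{m|n}_q\otimes\cdots,\bigwedge^{\nu_1}_q\C^{m|n}_q\otimes\cdots\bigr)$ coincides with $1_\nu \End_{U_q(\mathfrak{gl}(m|n))}\bigl(\bigwedge_q(\C^{m|n}_q\otimes \C^p_q)\bigr)1_\lambda$, which is matched by $1_\nu \dot U_q^{(m|n)}(\mathfrak{gl}(p))1_\lambda$ once we quotient. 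Since the idempotents $e_\mu$ are central, left- and right-multiplying by $1_\nu$, $1_\lambda$ commutes with the projection and yields a bijection on each hom-space.

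For the second assertion, essential surjectivity is immediate since $\operatorname{Rep}(\mathfrak{gl}(m|n))$ is by definition additively generated by tensor products of exterior powers, each of which is the image of some $1_{(k_1,\ldots,k_p)}$. The main obstacle I foresee is reconciling the categorical coproduct on the left, in which there are no morphisms between objects lying in different $\dot U_q^{(m|n)}(\mathfrak{gl}(p))$, with $\operatorname{Rep}(\mathfrak{gl}(m|n))$, where morphisms between objects of different widths clearly exist. The natural fix is padding: the weight $(\lambda_1,\ldots,\lambda_p,0,\ldots,0)\in \Z^{p'}$ for $p'\geq p$ represents the same object as $(\lambda_1,\ldots,\lambda_p)\in \Z^p$, because $\bigwedge^0_q\C^{m|n}_q\cong \C(q)$ is the monoidal unit, and any morphism between objects of widths $p$ and $p'$ is captured inside $\dot U_q^{(m|n)}(\mathfrak{gl}(\max(p,p')))$ via this identification. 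With this reading, the ``coproduct'' must be interpreted as a colimit along these padding inclusions, in which form the statement becomes the more natural equivalence $\dot U_q^{(m|n)}(\mathfrak{gl}(\infty))\simeq \operatorname{Rep}(\mathfrak{gl}(m|n))$ of Theorem \ref{thm:introthm2}.
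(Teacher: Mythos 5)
Your argument for the first assertion follows the paper's route exactly: Lemma~\ref{algiso} and Theorem~\ref{thm:directsumdecomp} yield the block decompositions, the kernel of the functor of Theorem~\ref{thm:fullfunctor} is identified with the ideal generated by $\{e_\mu : \mu\notin H\}$, and quotienting by this kernel gives precisely $\dot U_q^{(m|n)}(\mathfrak{gl}(p))$, whence the induced functor is full and faithful. Your additional remark that centrality of the $e_\mu$ is what lets one pass from the total endomorphism algebra to individual hom-spaces $1_\nu(-)1_\lambda$ is a correct and useful clarification of a step the paper leaves implicit.

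You have also correctly spotted that the second assertion is phrased loosely. If $\bigoplus_{p}\dot U_q^{(m|n)}(\mathfrak{gl}(p))$ is read as a genuine coproduct of categories, there are no morphisms between objects sitting in distinct summands, yet their images in $\operatorname{Rep}(\mathfrak{gl}(m|n))$ --- for instance, the images of $1_{(2,0)}$ under $p=2$ and of $1_{(2,0,0)}$ under $p=3$ --- are isomorphic objects and in particular admit nonzero morphisms between them. So the functor from the coproduct is essentially surjective and faithful, and full on each summand, but not full across summands, and the paper's one-line justification (``each summand is full and faithful'') does not close this gap. Your proposed repair, replacing the coproduct by the colimit along the padding inclusions $\iota_0$, is exactly what the author does next: Section~\ref{sec:glinfinity} introduces $\dot U_q^{(m|n)}(\mathfrak{gl}(\infty))$ as this direct limit and records Theorem~\ref{thm:descriptionOfRep2} as the clean form of the equivalence, explicitly acknowledging the duplication ``$1_{(2,0)}$, $1_{(2,0,0)}$, and so on.'' So your diagnosis matches the author's own subsequent refinement.
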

\begin{proof}
The first part is discussed above. For the second part, simply note that the functor is essentially surjective and each summand is full and faithful.
\end{proof}

The first non-trivial example of this is $\dot{U}_q^{(1,0)}(\mathfrak{gl}(2))$ and $\dot{U}_q^{(0,1)}(\mathfrak{gl}(2))$.
\begin{example} Consider $\dot{U}_q(\mathfrak{gl}(2))/I_{(2,0)}$, which has a basis
\[ 1_{(2,0)},\, 1_{(1,1)}, \, 1_{(0,2)}, \, F1_{(2,0)},\, F^{(2)}1_{(2,0)}, \, E1_{(1,1)}, \, F1_{(1,1)}, \, E1_{(0,2)}, \, E^{(2)}1_{(0,2)}, \, EF1_{(1,1)}. \]
Now $\dot{U}_q(\mathfrak{gl}(2))/I_{(2,0)} \cong \End_{\C(q)}(V_2(2,0))\oplus \End_{\C(q)}(V_2(1,1))$, so there must exist orthogonal idempotents corresponding to this decomposition. One notes that $EF1_{(1,1)}\cdot V_2(2,0)=[2]1_{(1,1)}\cdot V_2(2,0)$, while $EF1_{(1,1)}\cdot V_2(1,1)=0$ as this representation is 1-dimensional.

Hence, we find that $1_{(2,0)}+\frac{1}{[2]}EF1_{(1,1)}+1_{(0,2)}$ is an idempotent projecting to $\End_{\C(q)}(V_2(2,0))$ and $1_{(1,1)}-\frac{1}{[2]}EF1_{(1,1)}$ is an idempotent projecting to $\End_{\C(q)}(V_2(1,1))$.

Hence the algebra $1_{(1,1)}\dot U_q^{(1|0)}(\mathfrak{gl}(2))1_{(1,1)}$ is defined by \[ 1_{(1,1)}\dot U_q^{\infty}(\mathfrak{gl}(2))(1_{(1,1)}-\frac{1}{[2]}EF1_{(1,1)})\]
and $1_{(1,1)}\dot U_q^{(0|1)}(\mathfrak{gl}(2))1_{(1,1)}$ is defined by
\[1_{(1,1)}\dot U_q^{\infty}(\mathfrak{gl}(2))(\frac{1}{[2]}EF1_{(1,1)}).\]
\end{example}

\subsection{Branching rules and locality of relations}\label{sec:branchingrules}
Here we wish to verify that there is a well-defined inclusion $\dot U_q^{(m|n)}(\mathfrak{gl}(p))\hookrightarrow \dot U_q^{(m|n)}(\mathfrak{gl}(p+1))$ induced by the inclusion $\dot U_q(\mathfrak{gl}(p))\to \dot U_q(\mathfrak{gl}(p+1))$ and establish that the extra relations this imposes on $\operatorname{Rep}(\mathfrak{gl}(m|n))$ are indeed local, and do not depend on addition of strands on either side.

\begin{definition}
For each $j\geq 0$ define the inclusion
\[ \iota_j:\dot U_q(\mathfrak{gl}(p)) \to \dot U_q(\mathfrak{gl}(p+1)) \]
on objects by $1_{(\lambda_1,\ldots,\lambda_p)} \mapsto 1_{(\lambda_1,\ldots,\lambda_p,j)}$, and on morphisms by $E_i1_{(\lambda_1,\ldots,\lambda_p)} \mapsto E_i1_{(\lambda_1,\ldots,\lambda_p,j)}$ and $F_i1_{(\lambda_1,\ldots,\lambda_p)} \mapsto F_i1_{(\lambda_1,\ldots,\lambda_p,j)}$. This is well-defined since relations are mapped to relations.
\end{definition}

It is clear that this inclusion descends to an  inclusion $\dot U_q^\infty(\mathfrak{gl}(p)) \to \dot U_q^\infty(\mathfrak{gl}(p+1))$ since weights with non-negative entries are carried to the same. This inclusion gives rise to a restriction functor taking modules over $\dot U_q(\mathfrak{gl}(p+1))$ to modules over $\dot U_q(\mathfrak{gl}(p))$, by simply forgetting the action of the $E_p,F_p,L_{p+1}$. We denote the restriction of a $\dot U_q(\mathfrak{gl}(p+1))$-module $M$ to a $\dot U_q(\mathfrak{gl}(p))$-module by $M|_{p}$.

We have the following well-known theorem:

\begin{thm}\label{thm:branchingrule}
There is an isomorphism of $\dot U_q(\mathfrak{gl}(p))$-modules
\[ V_{p+1}(\lambda)|_{p} \cong \bigoplus_\nu V_p(\nu) \]
where the sum is over all dominant weights $\nu$ satisfying $\lambda_{i+1}\leq \nu_i\leq \lambda_{i}$.
\end{thm}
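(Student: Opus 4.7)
The plan is to deduce the quantum branching rule from the classical Gelfand--Tsetlin branching rule by a character argument, exploiting the fact that $V_{p+1}(\lambda)$ is a flat deformation of the classical irreducible module of the same highest weight.

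First I would record the background facts. The category of finite-dimensional $U_q(\mathfrak{gl}(p))$-modules of type $1$ is semisimple (for $q$ generic / in our setting $q$ is an indeterminate over $\C(q)$), and a module in this category is determined up to isomorphism by its formal character, i.e.\ by the dimensions of its weight spaces. Moreover, for each dominant weight $\mu$, the quantum irreducible $V_p(\mu)$ has the same character as the classical irreducible module $V^{cl}_p(\mu)$, since the weight-space dimensions can be computed via Verma-module-type arguments that do not involve $q$ (they depend only on the counting of PBW monomials in the $F_i^{(k)}$ annihilating the highest weight vector modulo the Serre-type relations, which have the same combinatorial content as in the classical case). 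The same comparison holds for $V_{p+1}(\lambda)$.

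Next I would compute characters. The restriction functor $|_{p}$ simply forgets the action of $E_p,F_p,L_{p+1}$, so for any $U_q(\mathfrak{gl}(p+1))$-module $M$, the $\mathfrak{gl}(p)$-character of $M|_p$ is obtained from the $\mathfrak{gl}(p+1)$-character of $M$ by ignoring the last coordinate of each weight (equivalently, summing up weight-space dimensions over the dropped coordinate). Applying this to $V_{p+1}(\lambda)$ and using the equality of quantum and classical characters noted above, we get
\[ \operatorname{ch}\bigl(V_{p+1}(\lambda)|_p\bigr) \;=\; \operatorname{ch}\bigl(V^{cl}_{p+1}(\lambda)|_p\bigr). \]
By the classical Gelfand--Tsetlin branching rule for $\mathfrak{gl}(p+1)\supset \mathfrak{gl}(p)$, the right-hand side equals $\sum_{\nu}\operatorname{ch}\bigl(V^{cl}_p(\nu)\bigr)=\sum_\nu \operatorname{ch}\bigl(V_p(\nu)\bigr)$, where $\nu$ ranges over dominant weights interlacing $\lambda$ in the sense $\lambda_{i+1}\leq \nu_i\leq \lambda_i$.

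Finally, combining semisimplicity of the restricted module with the character equality, we obtain the desired isomorphism
\[ V_{p+1}(\lambda)|_p \;\cong\; \bigoplus_\nu V_p(\nu). \]
The only non-routine point is the quantum-vs-classical character comparison; once that is taken as standard (it follows from the fact, used already in Lemma \ref{lm:flatmodule} above, that the relevant quantum modules are flat deformations of their classical counterparts and have weight-space dimensions independent of $q$), the branching rule reduces to the classical one. Alternatively, one could bypass the character comparison by invoking Lusztig's result that the representation theories of $U_q(\mathfrak{gl}(p+1))$ and $U(\mathfrak{gl}(p+1))$ are equivalent at generic $q$, so that the classical branching rule transfers verbatim; I would mention this as the cleanest route.
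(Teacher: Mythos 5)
Your proposal takes essentially the same route as the paper: invoke the classical Gelfand--Tsetlin branching rule and transfer it to the quantum setting by noting that the quantum irreducibles specialise at $q=1$ to classical irreducibles with the same characters. Your write-up spells out the semisimplicity-plus-character comparison more explicitly, but the underlying argument is identical.
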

\begin{proof}
The classical case is well-known (see, for example, \cite{Itzykson1966,Vaz2013}). The quantum case then follows since irreducible highest-weight modules specialise at $q=1$ to irreducible highest-weight modules.
\end{proof}

Hence we have 
\begin{equation}\label{eqn:branching}
\operatorname{End}_{\C(q)}(V_{p+1}(\lambda)|_{p}) \cong \bigoplus_\nu \operatorname{End}_{\C(q)}(V_p(\nu)).\end{equation} 
 
We can now state the following:

\begin{thm}\label{thm:inclusion}
The inclusion $\iota_j:\dot U^\infty_q(\mathfrak{gl}(p)) \to \dot U^\infty_q(\mathfrak{gl}(p+1))$ descends to a well-defined inclusion
\[\dot U_q^{(m|n)}(\mathfrak{gl}(p))\hookrightarrow \dot U_q^{(m|n)}(\mathfrak{gl}(p+1)). \]
Thus the additional ladder relation in $\operatorname{Rep}(\mathfrak{gl}(m|n))$ remains true if strands are added to the right of the ladder diagrams.
\end{thm}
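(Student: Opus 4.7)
The plan is to reduce the claim, via Lemma \ref{algiso}, to a question about actions on highest-weight modules. Under the identification
\[ \dot U_q^\infty(\mathfrak{gl}(p))\cong \bigoplus_{\mu}\operatorname{End}_{\C(q)}(V_p(\mu)), \]
the quotient $\dot U_q^{(m|n)}(\mathfrak{gl}(p))$ is the summand indexed by $\mu\in H$, so $x$ is killed in the quotient iff $x$ acts as zero on every $V_p(\mu)$ with $\mu\in H$. Both directions of the claim will then follow from the branching decomposition of Theorem \ref{thm:branchingrule}.

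For well-definedness, the key observation is that $\iota_j(\dot U_q^\infty(\mathfrak{gl}(p)))$ commutes with $L_{p+1}$, and each idempotent $\iota_j(1_{(\mu_1,\ldots,\mu_p)}) = 1_{(\mu_1,\ldots,\mu_p,j)}$ annihilates everything outside the $L_{p+1}=q^j$ weight subspace. Hence for any $\lambda\in H$ (viewed as a $\mathfrak{gl}(p+1)$-weight), $\iota_j(x)$ acts on $V_{p+1}(\lambda)$ through the action of $x$ on those branching summands $V_p(\nu)\subset V_{p+1}(\lambda)|_p$ with $|\lambda|-|\nu|=j$, and as zero on the rest. The interlacing $\lambda_{i+1}\leq \nu_i\leq \lambda_i$ then forces $\nu_{n+1}\leq \lambda_{n+1}\leq m$, so every such $\nu$ again lies in $H$. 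Consequently, if $x$ is killed in $\dot U_q^{(m|n)}(\mathfrak{gl}(p))$, then $\iota_j(x)$ is killed in $\dot U_q^{(m|n)}(\mathfrak{gl}(p+1))$, and the induced map is well-defined.

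For injectivity the dual statement is required: given $\mu\in H$, I must produce $\lambda\in H$ so that $V_p(\mu)$ appears in $V_{p+1}(\lambda)|_p$ with $L_{p+1}$-eigenvalue $q^j$. When $n\geq 1$, the weight $\lambda=(\mu_1+j,\mu_2,\ldots,\mu_p,0)$ is dominant, has $|\lambda|-|\mu|=j$, interlaces $\mu$, and satisfies $\lambda_{n+1}=\mu_{n+1}\leq m$. When $n=0$ the bound $\lambda_1\leq m$ rules this out for large $j$, but the interlacing slots $\lambda_i\in[\mu_i,\mu_{i-1}]$ and $\lambda_{p+1}\in[0,\mu_p]$ provide exactly $m$ units of slack above $|\mu|$, so one can realise any admissible $j$ by fixing $\lambda_1=m$ and distributing $j$ among $\lambda_2,\ldots,\lambda_{p+1}$.

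The final sentence of the theorem is then immediate: $\iota_j$ is the diagrammatic operation of attaching an identity upright of label $j$ to the right of a ladder, so the existence of the inclusion $\dot U_q^{(m|n)}(\mathfrak{gl}(p))\hookrightarrow \dot U_q^{(m|n)}(\mathfrak{gl}(p+1))$ means precisely that any additional relation on $p$-strand ladders remains a relation after such an extension. The main delicate point is the injectivity step for $n=0$, where the constraint $\lambda_1\leq m$ must be weighed against the interlacing bounds; every other step reduces to the observation that interlacing preserves the defining condition $\mu_{n+1}\leq m$ of $H$.
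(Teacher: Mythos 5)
Your well-definedness argument is exactly the one the paper uses: the branching rule of Theorem \ref{thm:branchingrule} together with the observation that interlacing $\lambda_{i+1}\leq\nu_i\leq\lambda_i$ forces $\nu_{n+1}\leq\lambda_{n+1}\leq m$, so a $\mathfrak{gl}(p+1)$-weight in $H$ restricts to $\mathfrak{gl}(p)$-weights in $H$. The extra detail you supply about the $L_{p+1}$-weight selection ($\iota_j(x)$ acting on $V_{p+1}(\lambda)$ factors through the summands $V_p(\nu)$ with $|\lambda|-|\nu|=j$) is correct and makes explicit what the paper leaves implicit.

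Where you go genuinely beyond the paper is the injectivity step. The theorem asserts an \emph{inclusion}, but the paper's proof only verifies that the map on quotients is well-defined; it never checks $\iota_j^{-1}(\ker_{p+1})\subseteq\ker_p$. Your reduction to the dual statement (given $\mu\in H$, exhibit $\lambda\in H$ with $\mu$ interlacing $\lambda$ and $|\lambda|-|\mu|=j$) is the right one, and your construction $\lambda=(\mu_1+j,\mu_2,\ldots,\mu_p,0)$ is correct whenever $n\geq 1$. (One cosmetic slip: when $n+1=p+1$ the quantity $\mu_{n+1}$ you invoke is undefined, but then $\lambda_{n+1}=0\leq m$ holds anyway.)

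Your remark about $n=0$ is the most interesting part of your proposal, and it points to a genuine overstatement in the theorem itself. As you compute, when $n=0$ the admissible range of $j=|\lambda|-|\mu|$ subject to $\lambda_1\leq m$ and interlacing is exactly $[0,m]$. In fact for $n=0$ and $j>m$ the descended map is identically zero: on any $V_{p+1}(\lambda)$ with $\lambda_1\leq m$ the branching summands satisfy $|\lambda|-|\nu|\in[\lambda_{p+1},\lambda_1]\subseteq[0,m]$, so the $L_{p+1}=q^j$ eigenspace is empty. Thus the theorem as stated fails in that regime (the paper only ever invokes $\iota_0$, so nothing downstream is affected, but the statement should be qualified). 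Your explicit recipe for $n=0$ --- ``fix $\lambda_1=m$ and distribute $j$'' --- is slightly off for small $j$ (forcing $\lambda_1=m$ already contributes $m-\mu_1$ to $j$, which may exceed $j$); it is cleaner to interpolate between the minimal choice $(\mu_1,\ldots,\mu_p,0)$ and the maximal choice $(m,\mu_1,\ldots,\mu_p)$, or simply take $\lambda=(\mu_1+j,\mu_2,\ldots,\mu_p,0)$ when $j\leq m-\mu_1$ and otherwise increase later entries. This is a minor expositional fix; the underlying observation is correct.
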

\begin{proof}
If $\lambda=(\lambda_1,\ldots,\lambda_{p+1})$ is such that $\lambda_{n+1}\leq m$, then $\nu_{n+1}\leq m$ for all $\nu$ in the direct sum in Theorem \ref{thm:branchingrule}. So $\lambda\in H$ implies $\nu\in H$ for such $\nu$. Hence if $\nu\not\in H$ in the decomposition in equation \ref{eqn:branching}, then $\lambda\not\in H$ also, so the inclusion of the quotient is well-defined.
\end{proof}

Note that there is also an inclusion $\dot U_q^\infty(\mathfrak{gl}(p)) \to \dot U_q^\infty(\mathfrak{gl}(p+1))$ where $1_{\lambda}\mapsto 1_{(j,\lambda)}$ and $E_i1_{\lambda}\mapsto E_{i+1}1_{(j,\lambda)}$, $F_i1_\lambda\mapsto F_{i+1}1_{(j,\lambda)}$. This inclusion also has restriction functors, and has the same branching rule as in Theorem \ref{thm:branchingrule}. Hence we have:

\begin{cor}
The additional relation in $\operatorname{Rep}(\mathfrak{gl}(m|n))$ remains true if strands are added to the left or to the right of the ladder diagrams.
\end{cor}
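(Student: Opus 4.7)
The plan is to imitate the proof of Theorem \ref{thm:inclusion} using the second inclusion $\dot U_q^\infty(\mathfrak{gl}(p)) \to \dot U_q^\infty(\mathfrak{gl}(p+1))$ described in the remark immediately preceding the corollary, namely $1_\lambda \mapsto 1_{(j,\lambda)}$ with $E_i 1_\lambda \mapsto E_{i+1} 1_{(j,\lambda)}$ and $F_i 1_\lambda \mapsto F_{i+1} 1_{(j,\lambda)}$. Since Theorem \ref{thm:inclusion} already establishes that the extra relation is preserved under the ``add a strand on the right'' inclusion $\iota_j$, it will suffice to run the analogous argument for this ``add a strand on the left'' inclusion and then combine the two.

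First I would state the analogue of Theorem \ref{thm:inclusion}: this left inclusion descends to a well-defined inclusion $\dot U_q^{(m|n)}(\mathfrak{gl}(p)) \hookrightarrow \dot U_q^{(m|n)}(\mathfrak{gl}(p+1))$. The proof is identical in structure. The key input is the branching rule, and the remark preceding the corollary asserts that restriction along the left inclusion satisfies the same branching decomposition as in Theorem \ref{thm:branchingrule}, namely $V_{p+1}(\mu)|_p \cong \bigoplus_\nu V_p(\nu)$ with $\mu_{i+1} \leq \nu_i \leq \mu_i$. Given this, if $\mu \in H$ for $\mathfrak{gl}(p+1)$, so $\mu_{n+1} \leq m$, then interlacing at position $i = n+1$ forces $\nu_{n+1} \leq \mu_{n+1} \leq m$, hence $\nu \in H$ for $\mathfrak{gl}(p)$. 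The contrapositive gives exactly the statement that the quotient by idempotents $e_\nu$ with $\nu \notin H$ is compatible with the left inclusion, so the inclusion descends.

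To conclude the corollary, observe that Theorem \ref{thm:inclusion} gives the statement for adding a single strand on the right, and the argument just sketched gives the statement for adding a single strand on the left. An arbitrary configuration of extra strands flanking a ladder diagram is obtained by iterating the two inclusions in some order, and composition of inclusions that preserve the extra relations again preserves the extra relations. Therefore the additional ladder relation in $\operatorname{Rep}(\mathfrak{gl}(m|n))$ is local: it remains true with any number of strands (with any labels) added on either side.

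The main obstacle is really the branching rule assertion for the left inclusion, since the statement of Theorem \ref{thm:branchingrule} as written concerns the upper-left embedding of $\mathfrak{gl}(p)$ into $\mathfrak{gl}(p+1)$. I would verify this by invoking the symmetry of the Gelfand--Tsetlin branching: restriction of $V_{p+1}(\mu)$ to the lower-right copy of $\mathfrak{gl}(p)$ yields the same interlacing pattern as restriction to the upper-left copy, and the quantum case reduces to the classical case via the $q=1$ specialisation exactly as in the proof of Theorem \ref{thm:branchingrule}. Once this branching rule is available, the rest of the argument is a formal application of the proof already used for $\iota_j$.
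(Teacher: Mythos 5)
Your proposal follows the same route as the paper: the corollary is deduced by re-running the proof of Theorem~\ref{thm:inclusion} for the left-padding inclusion $1_\lambda\mapsto 1_{(j,\lambda)}$, using the branching rule to show that $\mu\in H$ implies $\nu\in H$ for each summand $V_p(\nu)$ of $V_{p+1}(\mu)|_p$, and then combining the two one-strand results. The only difference is one of emphasis: the paper simply asserts in the preceding remark that this second inclusion ``has the same branching rule as in Theorem~\ref{thm:branchingrule}'' and treats the corollary as immediate, whereas you correctly flag this as the one ingredient that actually requires checking and sketch its verification via the symmetry of Gelfand--Tsetlin branching together with the $q=1$ specialisation argument already used in the proof of Theorem~\ref{thm:branchingrule}.
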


Hence we can deduce that the extra relation on $\operatorname{Rep}(\mathfrak{gl}(m|n))$ is a local one, and its complexity is governed by $n$.

\begin{thm}\label{thm:localrelation}
The relation on $\operatorname{Rep}(\mathfrak{gl}(m|n))$ is generated as a local relation by the identity $\sum_{\mu\in H}e_\mu=\id$ in $\dot U_q^\infty(\mathfrak{gl}(n+1))$.
\end{thm}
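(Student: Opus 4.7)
The plan is to show that the two-sided ideal $\mathcal{J}_p \subseteq \dot{U}_q^\infty(\mathfrak{gl}(p))$ generated by local insertions (as in Theorem \ref{thm:inclusion} and its Corollary, i.e.\ adding strands on either side) of elements of $\mathcal{I}_{n+1}$, the kernel of $\dot{U}_q^\infty(\mathfrak{gl}(n+1)) \to \operatorname{Rep}(\mathfrak{gl}(m|n))$, coincides with the full kernel $\mathcal{I}_p$ for every $p$. The containment $\mathcal{J}_p \subseteq \mathcal{I}_p$ is immediate from Theorem \ref{thm:inclusion} and its Corollary. For the reverse, combine the discussion preceding Theorem \ref{thm:descriptionOfRep} with Lemma \ref{algiso} to obtain the block decomposition $\dot{U}_q^\infty(\mathfrak{gl}(p)) \cong \bigoplus_\nu \End_{\C(q)}(V_p(\nu))$, where $\nu$ ranges over dominant weights of $\mathfrak{gl}(p)$, and recall that $\mathcal{I}_p$ is exactly the direct sum of the simple summands indexed by $\nu \notin H$. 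It therefore suffices to show $e_\mu \in \mathcal{J}_p$ for every dominant weight $\mu$ with $\mu_{n+1} > m$.

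Fix such a $\mu$ and set
\[ \widetilde{e}_\mu := \iota(e_{(\mu_1,\ldots,\mu_{n+1})}) \in \dot{U}_q^\infty(\mathfrak{gl}(p)), \]
where $\iota$ is the iterated right-side inclusion of Theorem \ref{thm:inclusion} appending strands with labels $\mu_{n+2},\ldots,\mu_p$. Since $\mu_{n+1} > m$ forces $(\mu_1,\ldots,\mu_{n+1}) \notin H$, we have $e_{(\mu_1,\ldots,\mu_{n+1})} \in \mathcal{I}_{n+1}$ and hence $\widetilde{e}_\mu \in \mathcal{J}_p$. The crucial observation is that the highest-weight vector $v_\mu \in V_p(\mu)$ is simultaneously a highest-weight vector of weight $(\mu_1,\ldots,\mu_{n+1})$ for the ``top-left'' subalgebra $\dot{U}_q(\mathfrak{gl}(n+1)) \subset \dot{U}_q(\mathfrak{gl}(p))$ generated by $E_1,\ldots,E_n$, $F_1,\ldots,F_n$, $L_1,\ldots,L_{n+1}$; this follows directly from $E_i v_\mu = 0$ for $i \leq n$ and $L_j v_\mu = q^{\mu_j} v_\mu$ for $j \leq n+1$. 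The central idempotent $e_{(\mu_1,\ldots,\mu_{n+1})}$ therefore acts as the identity on $v_\mu$, and so does $\widetilde{e}_\mu$. In particular $\widetilde{e}_\mu \cdot e_\mu$ is a non-zero element of the simple summand $\End_{\C(q)}(V_p(\mu))$ that lies in $\mathcal{J}_p$; since this simple $\C(q)$-algebra has no nontrivial two-sided ideals, we conclude $\End_{\C(q)}(V_p(\mu)) \subseteq \mathcal{J}_p$, and in particular $e_\mu \in \mathcal{J}_p$, as required.

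The main obstacle is verifying that $\widetilde{e}_\mu$ acts as the identity on $v_\mu$. This hinges on the (straightforward but essential) fact that the highest-weight vector of $V_p(\mu)$ remains a highest-weight vector upon restriction to the natural top-left $\dot{U}_q(\mathfrak{gl}(n+1))$-subalgebra, which is immediate from the defining action of the generators on a highest-weight module; once this is in hand, the simplicity of the summands in the block decomposition reduces the rest of the argument to routine bookkeeping.
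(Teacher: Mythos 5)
Your proof is correct and takes essentially the same route as the paper: both restrict the highest-weight vector of $V_p(\mu)$ to the top-left $\dot U_q(\mathfrak{gl}(n+1))$ subalgebra, observe that the local idempotent $e_{(\mu_1,\ldots,\mu_{n+1})}$ (equivalently its insertion $\widetilde{e}_\mu$) therefore acts as the identity on that vector, and then use the block decomposition $\dot U_q^\infty(\mathfrak{gl}(p))\cong\bigoplus_\nu\End_{\C(q)}(V_p(\nu))$ to conclude that the whole summand $\End_{\C(q)}(V_p(\mu))$ is in the ideal. Your write-up is if anything more careful than the paper's: you make the two-sided-ideal/simplicity argument explicit, where the paper condenses the same point into the remark that the identity on $V_p(\lambda)$ factors through the identity on its highest-weight vector.
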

\begin{proof}
The identity on $V_{p}(\lambda)$ can be written as a sum of ladders starting from the different weight-spaces that all factor through the identity on the highest-weight vector. If $\lambda_{n+1}>m$, then $\dot U_q^{(m|n)}(\mathfrak{gl}(n+1))$ acts as $0$ on the restriction of the highest-weight vector, since it is carried to the highest-weight module $V_{n+1}(\lambda|_{n+1})$, which has its $(n+1)$-th term $>m$. So in particular the identity on the restriction of the highest-weight vector is equal to an element in $\dot U_q^{(m|n)}(\mathfrak{gl}(n+1))$ acting on $V_{n+1}(\lambda|_{n+1})$ as $0$. The identity on the restriction of the highest-weight vector is the restriction of the identity on the highest-weight vector, so in particular, $\dot U_q^{(m|n)}(\mathfrak{gl}(p))$ acts as $0$ on $V_{p}(\lambda)$. Since $\dot U_q^{(m|n)}(\mathfrak{gl}(p))$ is characterised by acting as $0$ on $V_{p}(\lambda)$ when $\lambda_{n+1}>m$, and as $\dot U_q(\mathfrak{gl}(p))$ when $\lambda_{n+1}\leq m$, it follows that $\dot U_q^{(m|n)}(\mathfrak{gl}(p))$ is determined by the relation that $\sum_{\mu\in H}e_\mu=\id$ with any number of strands on either side.
\end{proof}

\subsection{The special case \texorpdfstring{$p=2$}{p=2}}
The case where $n=0$ is the simplest, as shown by Cautis, Kamnitzer and Morrison \cite{Cautis2012}, as the only extra relation is killing ladders that involve a colour higher than $m$. The case $n=1$ is the next easiest, where we can explicitly compute the local relation for all $m\geq 1$, since the relation is contained in $\dot U_q(\mathfrak{gl}(2))$ by Theorem \ref{thm:localrelation}.
\begin{thm}\label{thm:generalprojector}
The projection to $\End_{\C(q)}(V_2(k+l-m,m))\cdot 1_{(k,l)}$ in $\dot{U}_q(\mathfrak{gl}(2))/I_{(k+l,0)}$ is
\[e^{k,l}_m= \sum_{t=0}^m (-1)^t\frac{\chuse{l-m+t}{t}}{\chuse{k+l-2m+t}{l-m+t}}\frac{[k+l-2m+1]}{[k+l-2m+1+t]} F^{(l-m+t)}E^{(l-m+t)}1_{(k,l)} \]
\end{thm}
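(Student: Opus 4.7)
The approach proceeds via the block decomposition of $\dot U_q(\mathfrak{gl}(2))/I_{(k+l,0)}$. By Lemma \ref{algiso}, this algebra is isomorphic to $\bigoplus_{j}\End_{\C(q)}(V_2(\lambda_j))$ with $\lambda_j=(k+l-j,j)$, and the sought projector to the block $\End_{\C(q)}(V_2(k+l-m,m))\cdot 1_{(k,l)}$ is uniquely characterised by acting as $\delta_{jm}\cdot\id$ on the weight-$(k,l)$ subspace of each $V_2(\lambda_j)$. For $0\leq j\leq\min(k,l)$ this subspace is one-dimensional and spanned by $F^{(l-j)}v_{\lambda_j}$ (and is zero otherwise), so verifying the claimed formula reduces to a finite collection of scalar identities, one for each $j$.

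My first step is to compute the scalar action of the building block $F^{(r)}E^{(r)}1_{(k,l)}$ on $F^{(l-j)}v_{\lambda_j}$. Using the standard $U_q(\mathfrak{sl}_2)$ commutation
\[ E^{(a)}F^{(b)}=\sum_{t\geq 0}F^{(b-t)}\chuse{H-a-b+2t}{t}E^{(a-t)} \]
together with $Ev_{\lambda_j}=0$, $Hv_{\lambda_j}=(k+l-2j)v_{\lambda_j}$, and the divided-power identity $F^{(r)}F^{(l-j-r)}=\chuse{l-j}{r}F^{(l-j)}$, I expect
\[ F^{(r)}E^{(r)}1_{(k,l)}\cdot F^{(l-j)}v_{\lambda_j}=\chuse{k-j+r}{r}\chuse{l-j}{r}\,F^{(l-j)}v_{\lambda_j} \]
for $r\leq l-j$, and zero otherwise. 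Substituting $r=l-m+t$ in the formula for $e_m^{k,l}$ then produces an explicit scalar $c_j$ by which $e_m^{k,l}$ acts on $F^{(l-j)}v_{\lambda_j}$, and it remains to show $c_j=\delta_{jm}$. The case $j>m$ is immediate because the factor $\chuse{l-j}{l-m+t}$ vanishes for every $t\geq 0$; for $j=m$ this same factor forces $t=0$, and the remaining quantum binomials telescope to give $c_m=1$.

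The substantive case is $0\leq j<m$. After cancelling the quantum factorials (using $[a+t]!\cdot[a+t+1]=[a+t+1]!$) and setting $s=m-j\geq 1$, $a=k+l-2m$, the identity $c_j=0$ collapses to
\[ \sum_{t=0}^{s}(-1)^t\chuse{s}{t}\chuse{a+s+t}{s-1}=0. \]
This is the main obstacle: it is the $q$-analogue of the classical fact that the $s$-th finite difference of a polynomial of degree strictly less than $s$ vanishes, applied to the function $t\mapsto\chuse{a+s+t}{s-1}$ which is of that ``degree''. I plan to prove it by induction on $s$, splitting $\chuse{s}{t}=\chuse{s-1}{t}+q^{s-t}\chuse{s-1}{t-1}$ via $q$-Pascal and re-indexing to reduce to the $s-1$ instance; the base cases $s=1,2$ are immediate, the latter being the familiar $[a+2]+[a+4]=[2][a+3]$. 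Conceptually, the formula for $e_m^{k,l}$ supplies the coefficients that invert the Vandermonde-type matrix with entries $\chuse{k-j+r}{r}\chuse{l-j}{r}$ against the unit vector at $j=m$, and what is striking is that this inverse admits such a concise closed form supported only on $r\in\{l-m,\ldots,l\}$.
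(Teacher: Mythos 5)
Your proposal is correct, and it takes a genuinely different route from the paper. The paper proves the formula by induction on $m$: after verifying the base case directly, it uses the idempotency relation $e^2=e$ together with the observation that the divided-power commutation coefficients depend only on $\lambda_1-\lambda_2$ to inherit the first $m-1$ coefficients from the idempotent at level $(k-1,l-1,m-1)$, and then pins down the final coefficient by evaluating the action on $w^k\otimes w^l$ in the $U_q(\mathfrak{gl}(1|1))$ representation category using formulas from \cite{Grant2014}. Your approach avoids both the $\mathfrak{gl}(1|1)$ detour and the idempotency bootstrap: you characterise the projector by its scalar action $\delta_{jm}$ on the one-dimensional $(k,l)$-weight space of each $V_2(k+l-j,j)$, compute correctly that $F^{(r)}E^{(r)}1_{(k,l)}$ acts there by $\chuse{k-j+r}{r}\chuse{l-j}{r}$, and reduce the whole theorem to the single $q$-binomial identity $\sum_{t=0}^{s}(-1)^t\chuse{s}{t}\chuse{a+s+t}{s-1}=0$; the reduction after cancelling quantum factorials checks out exactly as you say, as do the $j=m$ and $j>m$ cases. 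This is more elementary and more transparent — it exhibits the formula as the explicit inverse, supported on the relevant column, of the triangular array $\bigl(\chuse{k-j+r}{r}\chuse{l-j}{r}\bigr)_{j,r}$. One detail does need fixing: the Pascal rule you quote, $\chuse{s}{t}=\chuse{s-1}{t}+q^{s-t}\chuse{s-1}{t-1}$, is incorrect for the balanced quantum integers $[n]=\frac{q^n-q^{-n}}{q-q^{-1}}$ used throughout the paper; the correct rule $\chuse{s}{t}=q^{-t}\chuse{s-1}{t}+q^{s-t}\chuse{s-1}{t-1}$, combined with the short computation $q^{-t}\chuse{a+s+t}{s-1}-q^{s-1-t}\chuse{a+s+1+t}{s-1}=-q^{a+s+1}\chuse{a+s+t}{s-2}$, yields the recursion $S(s,a)=-q^{a+s+1}S(s-1,a+1)$ for your sum, so the intended induction does close with base case $S(1,a)=1-1=0$.
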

\begin{proof}
We work by induction on $m$, over all $k$ and $l$. First observe that when $m=0$, the expression becomes
\[ \frac{1}{\chuse{k+l}{l}}F^{(l)}E^{(l)}1_{(k,l)} \]
which is clearly idempotent by the fact that
\[ E^{(l)}F^{(l)}1_{(k+l,0)}=\chuse{k+l}{l}1_{(k+l,0)}.\]
Its image is $\End_{\C(q)} (V_2(k+l,0))\cdot 1_{(k,l)}$ because it acts as the identity on the highest-weight vector in $V_2(k+l,0)$ but as $0$ on any highest weight module $V_2(\mu)$ with $\mu <(k+l,0)$.

Now suppose we have constructed the idempotent projecting to $\End_{\C(q)} (V_2(k+l-m-1,m+1))\cdot 1_{(k,l)}$ for all $k,l$. We wish to construct the idempotent $e$ for $\End_{\C(q)} (V_2(k+l-m,m))\cdot 1_{(k,l)}$. Now,
\[ \End_{\C(q)} (V_2(k+l-m,m))\cong \dot{U}_q(\mathfrak{gl}(2))[\geq (k+l-m,m)]/\dot{U}_q(\mathfrak{gl}(2))[>(k+l-m,m)] \]
hence there is a surjective map
\[ \dot{U}_q(\mathfrak{gl}(2))[\geq (k+l-m,m)] \to \End_{\C(q)}(V_2(k+l-m,m)). \]
Therefore elements of $\End_{\C(q)}(V_2(k+l-m,m))$ are represented by linear combinations of canonical basis elements acting only on $V_2(\mu)$ with $\mu\geq (k+l-m,m)$, and hence
\[e =\sum_{t=0}^m c_t F^{(l-m+t)}E^{(l-m+t)}1_{(k,l)} \]
for some coefficients $c_t$. To determine $c_t$, we appeal to the fact that $e^2=e$ and the induction hypothesis.

We calculated the idempotent $e_1$ for $\End_{\C(q)} (V_2(k+l-2-m-1,m+1))\cdot 1_{(k-1,l-1)}$ by induction, with coefficients as in the statement of the theorem. To calculate $e^2$, one takes terms
\[ F^{(l-m+t)}E^{(l-m+t)}F^{(l-m+s)}E^{(l-m+s)}1_{(k,l)} \]
and simplifies by writing $E^{(l-m+t)}F^{(l-m+s)}1_{(k+l-m+s,m-s)}$ in terms of $F^{(a)}E^{(b)}1_{(k+l-m+s,m-s)}$ and then combining the powers of $F$ and $E$ by using
\[ F^{(a)}F^{(b)}=\chuse{a+b}{b}F^{(a+b)} \]
and similarly for $E$.

The observation is that that in $e_1$, we have terms $F^{(l-m+t)}E^{(l-m+t)}1_{(k-1,l-1)}$ since the $+1$'s and $-1$'s cancel in the powers of $F$ and $E$, so the commutator for $F$ and $E$ is used on
\[ E^{(l-m+t)}F^{(l-m+s)}1_{(k+l-1-m+s,m-s-1)} \]
and since the coefficients depend only on the difference of the components (ie. the number $k+l-1-m+s - (m-s-1) = k+l-2m+2s$) the coefficients that occur here are the same for $k,l,m$ as they are for $k-1,l-1,m-1$.

Thus, since we must have $e^2=e$, the first $m-1$ coefficients in $e$ must be exactly those appearing in $e_1$, which agrees with the stated formula.

To determine the final $t=m$ term in $e$, we can exploit the relationship between $\dot{U}_q(\mathfrak{gl}(2))/I_{(k+l,0)}$ and the representation theory of $U_q(\mathfrak{gl}(1|1))$ already proved in \cite{Grant2014}. Since $e$ projects to $\End_{\C(q)}(V_2(k+l-m,m))$, then $e$ acts as $0$ when considered as a morphism of representations in $U_q(\mathfrak{gl}(1|1))$ unless $m=1$ or $m=0$. Since we already know the case $m=0$, we check $m>0$.

Applying the morphism to the element $w^k\otimes w^l\in \bigwedge^k_q\C^{1|1}_q\otimes \bigwedge^l_q\C^{1|1}_q$ always gives $0$, since the $m=0$ idempotent already acts as the identity on $w^k\otimes w^l$. Hence we can calculate the final coefficient $c_m$ by knowing the first $m-1$ coefficients and by knowing that the morphism applied to $w^k\otimes w^l$ gives $0$.

We know from \cite{Grant2014} that $F^{(l-m+t)}E^{(l-m+t)}w^k\otimes w^l=\chuse{l}{l-m+t}\chuse{k+l-m+t}{k}w^k\otimes w^l$, hence we deduce that
\[c_m= -\frac{1}{\chuse{k+l}{k}}\sum_{t=0}^{m-1} (-1)^t\frac{\chuse{l-m+t}{t}}{\chuse{k+l-2m+t}{l-m+t}}\frac{[k+l-2m+1]}{[k+l-2m+1+t]}\chuse{l}{l-m+t}\chuse{k+l-m+t}{k}. \]
Use of identities of quantum binomials then yields
\[ c_m=(-1)^m \frac{1}{\chuse{k+l-m}{l}}\chuse{l}{m}\frac{[k+l-2m+1]}{[k+l-m+1]} \]
as required.

To see it has the correct image, simply note that all terms except the $t=0$ term lie in $\dot{U}_q(\mathfrak{gl}(2))[>(k+l-m,m)]$, hence act as $0$ on $V_2(k+l-m,m)$, and the $t=0$ term acts as the identity on the $(k,l)$ weight space of $V_2(k+l-m,m)$. 
\end{proof}

For the case of $U_q(\mathfrak{gl}(m|1))$, the requirement on $\mu\in H$ is $\mu_2\leq m$. This gives a direct sum
\[ \operatorname{End}_{U_q(\mathfrak{gl}(m|n))}\left( \bigwedge_q(\C^{m|n}_q\otimes \C^2_q)\right) \cong \bigoplus_{\mu\in H} \operatorname{End}_{\C(q)}(V_2(\mu)) \]
and by Theorem \ref{th:skewhoweduality}, the left-hand side is identified with morphisms in $\operatorname{Rep}(\mathfrak{gl}(m|n))$ between the tensor products of two simple objects $\bigwedge^k(\C^{m|n}_q)$.

Hence the projection $\dot U^\infty_q(\mathfrak{gl}(2))$ is multiplication by
\[ \sum_{k,l\in \N}\sum_{i=1}^m e^{k,l}_i \]
where the $e_k$ are as in Theorem \ref{thm:generalprojector}. This of course acts as a finite sum on any element in $\dot U_q^{\infty}(\mathfrak{gl}(2))$. Thus one can think of the `additional relation' on $\operatorname{Rep}(\mathfrak{gl}(m|1))$ as
\[ \sum_{i=1}^m e^{k,l}_i = 1_{k,l} \]
for all $k,l$. That is, both sides act identically on $\operatorname{Rep}(\mathfrak{gl}(m|1))$.

In the case of $U_q(\mathfrak{gl}(1|1))$, the idempotent takes on the form
\[ \sum_{k,l\in \N}\left( \frac{1}{\chuse{k+l-2}{l-1}}\begin{tikzpicture}[baseline=-0.65ex]
\draw (-0.5,-1) -- (-0.5,1);
\draw (0.5,-1) -- (0.5,1);
\draw (-0.5,-0.2) -- (0.5,-0.6);
\draw (0.5,0.6) -- (-0.5,0.2);
\draw (-0.5,-1.25) node {$k$};
\draw (0.5,-1.25) node {$l$};
\draw (-0.5,1.25) node {$k$};
\draw (0.5,1.25) node {$l$};
\draw (0,-0.7) node {$l-1$};
\draw (0,0.7) node {$l-1$};
\draw (-0.75,0) node {};
\draw (0.75,0) node {};
\end{tikzpicture}- \frac{[l][k+l-1]}{[k]\chuse{k+l}{k}} \begin{tikzpicture}[baseline=-0.65ex]
\draw (-0.5,-1) -- (-0.5,1);
\draw (0.5,-1) -- (0.5,-0.6);
\draw (0.5,0.6) -- (0.5,1);
\draw (-0.5,-0.2) -- (0.5,-0.6);
\draw (0.5,0.6) -- (-0.5,0.2);
\draw (-0.5,-1.25) node {$k$};
\draw (0.5,-1.25) node {$l$};
\draw (-0.5,1.25) node {$k$};
\draw (0.5,1.25) node {$l$};
\draw (0,-0.7) node {};
\draw (0,0.7) node {};
\draw (-0.75,0) node {};
\draw (0.75,0) node {};
\end{tikzpicture}\right) + \left( \frac{1}{\chuse{k+l}{k}}\begin{tikzpicture}[baseline=-0.65ex]
\draw (-0.5,-1) -- (-0.5,1);
\draw (0.5,-1) -- (0.5,-0.6);
\draw (0.5,0.6) -- (0.5,1);
\draw (-0.5,-0.2) -- (0.5,-0.6);
\draw (0.5,0.6) -- (-0.5,0.2);
\draw (-0.5,-1.25) node {$k$};
\draw (0.5,-1.25) node {$l$};
\draw (-0.5,1.25) node {$k$};
\draw (0.5,1.25) node {$l$};
\draw (0,-0.7) node {};
\draw (0,0.7) node {};
\draw (-0.75,0) node {};
\draw (0.75,0) node {};
\end{tikzpicture}\right) \]
\[ = \sum_{k,l\in \N}\left( \frac{1}{\chuse{k+l-2}{l-1}}\begin{tikzpicture}[baseline=-0.65ex]
\draw (-0.5,-1) -- (-0.5,1);
\draw (0.5,-1) -- (0.5,1);
\draw (-0.5,-0.2) -- (0.5,-0.6);
\draw (0.5,0.6) -- (-0.5,0.2);
\draw (-0.5,-1.25) node {$k$};
\draw (0.5,-1.25) node {$l$};
\draw (-0.5,1.25) node {$k$};
\draw (0.5,1.25) node {$l$};
\draw (0,-0.7) node {$l-1$};
\draw (0,0.7) node {$l-1$};
\draw (-0.75,0) node {};
\draw (0.75,0) node {};
\end{tikzpicture}- \frac{[l-1]}{\chuse{k+l-1}{k-1}} \begin{tikzpicture}[baseline=-0.65ex]
\draw (-0.5,-1) -- (-0.5,1);
\draw (0.5,-1) -- (0.5,-0.6);
\draw (0.5,0.6) -- (0.5,1);
\draw (-0.5,-0.2) -- (0.5,-0.6);
\draw (0.5,0.6) -- (-0.5,0.2);
\draw (-0.5,-1.25) node {$k$};
\draw (0.5,-1.25) node {$l$};
\draw (-0.5,1.25) node {$k$};
\draw (0.5,1.25) node {$l$};
\draw (0,-0.7) node {};
\draw (0,0.7) node {};
\draw (-0.75,0) node {};
\draw (0.75,0) node {};
\end{tikzpicture}\right)
\]
This relation is equivalent to the one derived in \cite[Equation 4.3]{Grant2014}, which can be seen by repeatedly applying the relations to the middle weights $(k+1,l-1)$ and $(k+2,l-2)$ until the diagrams factor through $(k+l-1,1)$ and $(k+l,0)$.

For other $\dot U_q(\mathfrak{gl}(p))$, these idempotents seem very difficult to compute explicitly, but by Theorem \ref{thm:localrelation} we at least know that the relation on $\operatorname{Rep}(\mathfrak{gl}(m|1))$ is generated locally by the above relation.

\subsection{Direct Limit of \texorpdfstring{$\dot U_q(\mathfrak{gl}(p))$}{Uq(gl(p))}}\label{sec:glinfinity}
In Theorem \ref{thm:descriptionOfRep}, we use a direct sum of $\dot U_q(\mathfrak{gl}(p))$ to describe an equivalence of categories. However, note that all we really needed was to ensure large enough tensor products of exterior powers were reached by the functor. There is a lot of duplication in the functor, since the object $\bigwedge^2(\C^{m|n}_q)$ of $\operatorname{Rep}(\mathfrak{gl}(m|n))$ is reached by $1_{(2,0)}$, $1_{(2,0,0)}$, and so on. This is fine as far as an equivalence goes, but a slightly neater idea is afforded by the following:

\begin{definition}
Using the inclusion
\[ \iota_0:\dot U_q(\mathfrak{gl}(p)) \to \dot U_q(\mathfrak{gl}(p+1)) \] from Section \ref{sec:branchingrules}, we define $\dot U_q(\mathfrak{gl}(\infty))$ as the direct limit of the system
\[ \dot U_q(\mathfrak{gl}(\infty)) = \lim_{\rightarrow} \left(\xymatrix{\cdots \ar[r] & \dot U_q(\mathfrak{gl}(p)) \ar[r]& \dot U_q(\mathfrak{gl}(p+1)) \ar[r] &\cdots} \right). \]
\end{definition}

Objects in this category are elements $1_{\lambda}$ where $\lambda$ is a sequence of integers with $\lambda_i=0$ for all but finitely many $i$.

It is easy to see (cf. \cite[Theorem 26.3.1]{Lusztig1993}) that each inclusion carries the canonical basis into the canonical basis, and therefore $\dot U_q(\mathfrak{gl}(\infty))$ inherits a canonical basis from the canonical basis of $\dot U_q(\mathfrak{gl}(p))$ for each $p$.

As before the inclusion takes a weight with a negative entry to a weight with a negative entry, so it also descends to a map $\dot U_q^\infty (\mathfrak{gl}(p)) \to \dot U_q^\infty (\mathfrak{gl}(p+1))$ and by Theorem \ref{thm:inclusion} the inclusion descends to a map
\[ \dot U_q^{(m|n)}(\mathfrak{gl}(p)) \to \dot U_q^{(m|n)}(\mathfrak{gl}(p+1)).\]

\begin{definition}
We define $\dot U_q^\infty(\mathfrak{gl}(\infty))$ and $\dot U_q^{(m|n)}(\mathfrak{gl}(\infty))$ to be the direct limits of these inclusions.
\end{definition}

\begin{thm}\label{thm:descriptionOfRep2}
There is an equivalence of categories
\[ \dot U_q^{(m|n)}(\mathfrak{gl}(\infty))\to \operatorname{Rep}(\mathfrak{gl}(m|n)). \]
\end{thm}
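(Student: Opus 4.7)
The plan is to deduce this equivalence from Theorem~\ref{thm:descriptionOfRep} by recognising that the direct-limit construction of Section~\ref{sec:glinfinity} collapses the over-counting present in the direct sum $\bigoplus_{p\geq 2}\dot U_q^{(m|n)}(\mathfrak{gl}(p))$, while the corresponding identification in $\operatorname{Rep}(\mathfrak{gl}(m|n))$ is supplied by the monoidal isomorphism $\bigwedge_q^0(\C^{m|n}_q)\cong \C(q)$. Concretely, the transition map $\iota_0$ in the direct system appends a zero to the weight, and on the representation side this corresponds to tensoring with the trivial module.

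First I would verify that for each $p$ the diagram
\[\xymatrix{
\dot U_q^{(m|n)}(\mathfrak{gl}(p)) \ar[r]^{\iota_0} \ar[d] & \dot U_q^{(m|n)}(\mathfrak{gl}(p+1)) \ar[d] \\
\operatorname{Rep}(\mathfrak{gl}(m|n)) \ar@{=}[r] & \operatorname{Rep}(\mathfrak{gl}(m|n))
}\]
commutes up to natural isomorphism. On objects this is the identification $\bigwedge^{\lambda_1}_q\otimes\cdots\otimes\bigwedge^{\lambda_p}_q\otimes\bigwedge^0_q \cong \bigwedge^{\lambda_1}_q\otimes\cdots\otimes\bigwedge^{\lambda_p}_q$; on morphisms it follows because $\iota_0$ sends each generator $E_i,F_i$ with $i\leq p-1$ to the same-named generator in $\dot U_q(\mathfrak{gl}(p+1))$, which by Theorem~\ref{th:skewhoweduality} acts on the first $p$ tensor factors exactly as before. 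By the universal property of the colimit this yields a single functor $\Phi:\dot U_q^{(m|n)}(\mathfrak{gl}(\infty))\to \operatorname{Rep}(\mathfrak{gl}(m|n))$.

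Next I would check the three properties in turn. Essential surjectivity is immediate: each generating object $\bigwedge^{k_1}_q\otimes\cdots\otimes\bigwedge^{k_p}_q(\C^{m|n}_q)$ of $\operatorname{Rep}(\mathfrak{gl}(m|n))$ is the image of $1_{(k_1,\ldots,k_p)}$ viewed in $\dot U_q^{(m|n)}(\mathfrak{gl}(p))\subset \dot U_q^{(m|n)}(\mathfrak{gl}(\infty))$, and direct sums are handled exactly as in Theorem~\ref{thm:descriptionOfRep}. For fullness, any morphism $f:\Phi(1_\lambda)\to \Phi(1_\mu)$ with $\lambda,\mu$ supported on $\{1,\ldots,p\}$ has a preimage in $\dot U_q^{(m|n)}(\mathfrak{gl}(p))$ by Theorem~\ref{thm:descriptionOfRep}, which descends to the limit. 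For faithfulness, if $g$ in the colimit is represented by $g_p\in \dot U_q^{(m|n)}(\mathfrak{gl}(p))$ with $\Phi(g)=0$, then $g_p$ maps to zero under the functor in Theorem~\ref{thm:descriptionOfRep} by commutativity of the square, hence $g_p=0$ by the faithfulness there, hence $g=0$.

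The only delicate point, rather than a real obstacle, is the coherence of the natural isomorphisms making the square above commute compatibly as $p$ varies; once this is nailed down the rest is a formal consequence of the universal property of colimits together with Theorem~\ref{thm:descriptionOfRep}. In particular no new relations on ladders are introduced, reflecting the fact that the direct-limit formulation is a repackaging of the equivalence of Theorem~\ref{thm:descriptionOfRep} rather than a strictly stronger statement.
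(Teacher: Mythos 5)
The paper states Theorem \ref{thm:descriptionOfRep2} without proof, treating it as an immediate consequence of the direct-limit construction together with Theorem \ref{thm:descriptionOfRep}, and your argument supplies exactly the details the paper leaves implicit. The cone-compatibility square (identifying $\iota_0$ with tensoring by $\bigwedge^0_q\C^{m|n}_q\cong \C(q)$ on the representation side), the resulting colimit functor, and the transfer of essential surjectivity, fullness, and faithfulness from the finite-$p$ case are all correctly handled, so this is the intended proof.
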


We can define a highest-weight module $V_\infty (\lambda)$ over $\dot U_q(\mathfrak{gl}(\infty))$ in the usual way: consider a vector $v_\lambda$ with weight $\lambda$, where $\lambda$ is a sequence with $\lambda_1\geq \lambda_2\geq \cdots$ with $\lambda_i=0$ for all but finitely many $i$. Now define $V_\infty(\lambda)$ to be generated by $F_i^{(k)}v_\lambda$, subject to $E_iv_\lambda=0$ for all $i\in \N$, and $F_i^{(\lambda_i-\lambda_{i+1}+1)}v_\lambda=0$. This module will be infinite dimensional in general.

These modules behave very much like their finite-dimensional counterparts, in the sense that one can define an appropriate notion of the BGG category $\mathcal{O}$ for $\mathfrak{gl}(\infty)$ and the modules $V_\infty(\lambda)$ classify all irreducible modules in $\mathcal{O}$, as proved by Du and Fu \cite{Du2009}.

Then the canonical inclusion $\dot U_q(\mathfrak{gl}(p))\to \dot U_q(\mathfrak{gl}(\infty))$ induces a restriction functor $\operatorname{Res}_p$ giving $V_\infty(\lambda)$ the structure of a $\dot U_q(\mathfrak{gl}(p))$-module. There is then a canonical inclusion of $\dot U_q(\mathfrak{gl}(p))$-modules $V_p(\lambda|_p)\to \operatorname{Res}_p(V_\infty(\lambda))$, where $\lambda|_p$ denotes the first $p$ terms of $\lambda$. Hence we have the following:

\begin{lem}The algebra $\dot U_q^{(m|n)}(\mathfrak{gl}(\infty))$ acts as $0$ on $V_\infty(\mu)$ where $\mu_{n+1}>m$.
\end{lem}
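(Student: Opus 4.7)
The plan is to take an arbitrary $x \in \dot U_q^{(m|n)}(\mathfrak{gl}(\infty))$, which by the direct limit construction is represented by some $x_p \in \dot U_q^{(m|n)}(\mathfrak{gl}(p))$; using the inclusions of Theorem \ref{thm:inclusion} I may enlarge $p$ so that $p \geq n+1$ and $\mu_i = 0$ for all $i > p$. Under the iterated $\iota_0$-embedding $\dot U_q(\mathfrak{gl}(p)) \hookrightarrow \dot U_q(\mathfrak{gl}(\infty))$, each idempotent $1_\lambda$ for $\lambda \in \Z^p$ is sent to $1_{(\lambda, 0, 0, \ldots)}$, which projects onto a $\dot U_q(\mathfrak{gl}(\infty))$-weight space all of whose entries past position $p$ vanish.

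First I would split $V_\infty(\mu)$ into its $\dot U_q(\mathfrak{gl}(\infty))$-weight subspaces and separate these into two classes according to whether the weight is supported in the first $p$ coordinates. A weight of $V_\infty(\mu)$ has the form $\mu - \sum_i c_i \alpha_i$ with $c_i \geq 0$ and only finitely many nonzero; its $j$-th coordinate is $\mu_j - c_j + c_{j-1}$, which for $j > p$ reduces to $c_{j-1} - c_j$ since $\mu_j = 0$. Requiring this to vanish for all $j > p$, together with the eventual vanishing of the sequence $c_j$, forces $c_j = 0$ for every $j \geq p$. Thus any weight vector of $V_\infty(\mu)$ whose weight is supported in the first $p$ coordinates is of the form $F_{i_1} \cdots F_{i_k} v_\mu$ with each $i_\ell < p$, and so lies in the canonical submodule $V_p(\mu|_p) \subseteq V_\infty(\mu)$; conversely $V_p(\mu|_p)$ is spanned exactly by such vectors, so the two subspaces coincide.

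Next I would verify that $x_p$ annihilates both pieces. Writing $x_p$ as a finite sum of terms $a \cdot 1_\lambda \cdot b$ with $a, b$ words in the generators $E_i, F_i$ for $i < p$, each such generator shifts weights by a root $\alpha_i$ supported in the first $p$ coordinates, so applying $b$ to a weight vector $v$ preserves all coordinates past position $p$. In particular, if $v$ has a nonzero entry past $p$ then so does $bv$, and hence $1_{(\lambda, 0, 0, \ldots)} \cdot bv = 0$; summing gives $x_p \cdot v = 0$ on the second class. On the first class $V_p(\mu|_p)$, the hypothesis $\mu_{n+1} > m$ gives $(\mu|_p)_{n+1} = \mu_{n+1} > m$, so $\mu|_p \notin H$; the direct sum decomposition $\dot U_q^{(m|n)}(\mathfrak{gl}(p)) = \bigoplus_{\nu \in H} \operatorname{End}(V_p(\nu))$ coming from Lemma \ref{algiso} then shows that $x_p$ has no component acting on the $V_p(\mu|_p)$ block and therefore annihilates it.

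The main obstacle is the combinatorial identification of the first-$p$-coordinates-supported weight subspace of $V_\infty(\mu)$ with the canonical submodule $V_p(\mu|_p)$; this is what justifies the ``Hence'' leading to the lemma, promoting the canonical inclusion $V_p(\mu|_p) \subseteq \operatorname{Res}_p(V_\infty(\mu))$ to an equality on those weight subspaces actually detected by the $\iota_0$-embedded $\dot U_q(\mathfrak{gl}(p))$. Everything else is formal bookkeeping with the $\iota_0$-embedding and with the definition of $\dot U_q^{(m|n)}(\mathfrak{gl}(p))$ as a direct summand of $\dot U_q^\infty(\mathfrak{gl}(p))$.
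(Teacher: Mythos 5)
Your proof is correct and fills in exactly the details the paper leaves implicit. The paper gives no explicit proof: it establishes the canonical $\dot U_q(\mathfrak{gl}(p))$-module inclusion $V_p(\mu|_p)\hookrightarrow\operatorname{Res}_p(V_\infty(\mu))$ and then says ``Hence we have the following,'' and your argument is the correct unpacking of that ``hence.'' The two pieces you supply --- the weight-counting argument showing that $V_p(\mu|_p)$ is \emph{precisely} the sum of those weight spaces of $V_\infty(\mu)$ supported in the first $p$ coordinates (so that $\iota_0(\dot U_q(\mathfrak{gl}(p)))$ acts entirely through $V_p(\mu|_p)$ and kills everything else via the idempotents $1_{(\lambda,0,\ldots)}$), and the observation that $\mu|_p\notin H$ so the $\operatorname{End}(V_p(\mu|_p))$ block is absent from $\dot U_q^{(m|n)}(\mathfrak{gl}(p))\cong\bigoplus_{\nu\in H}\operatorname{End}(V_p(\nu))$ --- are exactly what is needed, and your bookkeeping with $\alpha_i$ and the eventual vanishing of the $c_j$ is sound, provided (as you arrange) $p\geq n+1$ so that $(\mu|_p)_{n+1}=\mu_{n+1}$.
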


\begin{lem}\label{lem:decomp}
There is an isomorphism of algebras
\[ \dot{U}_q^{(m|n)}(\mathfrak{gl}(\infty))\cong \sum_{\mu\in H} \operatorname{End}_{fr}(V_\infty(\mu)) \]
where $\operatorname{End}_{fr}(V_\infty(\mu))$ is the non-unital algebra of endomorphisms over $\C(q)$ of finite rank, the sum is the coproduct of algebras, and $H$ is the set of all dominant weights with $\mu_{n+1}\leq m$.
\end{lem}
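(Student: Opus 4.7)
The plan is to realize both sides as filtered colimits of their $p$-level truncations and check that the transition systems agree. First, by Lemma~\ref{algiso} together with the definition of $\dot U_q^{(m|n)}(\mathfrak{gl}(p))$ as the localisation of $\dot U_q^\infty(\mathfrak{gl}(p))$ at $\sum_{\mu\in H_p}e_\mu$, where $H_p$ denotes the set of dominant partition-weights $\mu\in\N^p$ with $\mu_{n+1}\leq m$, one has $\dot U_q^{(m|n)}(\mathfrak{gl}(p))\cong\bigoplus_{\mu\in H_p}\operatorname{End}_{\C(q)}(V_p(\mu))$ for every $p\geq 2$, with central idempotents $e_\nu^{(p)}$ projecting onto each summand.

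The key step is then to compute how the inclusion $\iota_0$ of Section~\ref{sec:branchingrules} interacts with these summand decompositions. Because $\iota_0(1_\lambda)=1_{(\lambda,0)}$, the subalgebra $\iota_0(\dot U_q(\mathfrak{gl}(p)))$ acts nontrivially on a weight vector of $V_{p+1}(\nu')$ only when its weight has vanishing $(p{+}1)$st coordinate. Since $L_{p+1}$ commutes with $\iota_0(\dot U_q(\mathfrak{gl}(p)))$, the branching decomposition $V_{p+1}(\nu')|_p=\bigoplus_{\nu''\prec\nu'}V_p(\nu'')$ is refined by $L_{p+1}$-eigenvalues $q^{|\nu'|-|\nu''|}$, and a direct weight-count shows that $|\nu''|=|\nu'|$ forces $\nu'_{p+1}=0$ and $\nu''=\nu'|_p$. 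Hence the ``$\lambda_{p+1}=0$'' weight-subspace of $V_{p+1}(\nu')$ is zero when $\nu'_{p+1}>0$, and otherwise is exactly the top branching summand $V_p(\nu'|_p)$, which has multiplicity one by Theorem~\ref{thm:branchingrule}. Consequently $\iota_0(e_\nu^{(p)})$ is the isotypic projector onto $V_p(\nu)\subseteq V_{p+1}((\nu,0))$, entirely supported in the single summand $\operatorname{End}(V_{p+1}((\nu,0)))$. So $\iota_0$ injects each summand $\operatorname{End}(V_p(\nu))$ into the single target summand $\operatorname{End}(V_{p+1}((\nu,0)))$ as the subalgebra of endomorphisms ``extended by zero along the isotypic projection.''

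Finally, for each $\mu\in H$ with support in the first $N$ coordinates, the fact that every element of $V_\infty(\mu)$ is built from $v_\mu$ using only finitely many $F_i$ at a time gives $V_\infty(\mu)=\bigcup_{p\geq N}V_p(\mu|_p)$, with $V_p(\mu|_p)$ equal to the span of weight vectors supported on the first $p$ coordinates. A finite-rank endomorphism has finite-dimensional image and finitely supported source, hence factors through $V_p(\mu|_p)$ for $p$ large via the weight-space projection, which by the argument above coincides with the same isotypic projection. Thus $\operatorname{End}_{fr}(V_\infty(\mu))=\lim_{\rightarrow}\operatorname{End}_{\C(q)}(V_p(\mu|_p))$ with the same transition maps appearing on the $\dot U_q^{(m|n)}$-side. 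Passing to the colimit over $p$, using that set-indexed direct sums commute with filtered colimits and identifying each $\mu\in H$ with the stable tail $\{\mu|_p\in H_p\}_{p\geq N}$ of summand labels, then yields the claimed isomorphism. The main obstacle is the middle paragraph: verifying that $\iota_0$ keeps each finite-dimensional summand inside a \emph{single} target summand rather than spreading over the full branching decomposition; the resolution is exactly the weight/$L_{p+1}$-eigenvalue count pinning the $\lambda_{p+1}=0$ subspace to one isotypic component, after which the remaining manipulations with filtered colimits are formal.
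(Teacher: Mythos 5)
The paper states this lemma without proof, so you are genuinely supplying an argument rather than reconstructing one; your filtered-colimit strategy is the natural choice given that $\dot U_q^{(m|n)}(\mathfrak{gl}(\infty))$ is \emph{defined} as a direct limit of the $\dot U_q^{(m|n)}(\mathfrak{gl}(p))$, each of which decomposes by Lemma \ref{algiso}. The step you flagged as the main obstacle is handled correctly: since $\iota_0(1_\lambda)=1_{(\lambda,0)}$, and every weight $\lambda$ of $V_{p+1}(\nu')$ has $\lambda_{p+1}=\nu'_{p+1}+c_p\geq \nu'_{p+1}$, the $\lambda_{p+1}=0$ subspace vanishes when $\nu'_{p+1}>0$ and otherwise is precisely the multiplicity-one branching summand $V_p(\nu'|_p)$; so $\iota_0(e^{(p)}_\nu)$ lands entirely in $\operatorname{End}(V_{p+1}((\nu,0)))$ and the transition map takes $\operatorname{End}(V_p(\nu))$ injectively into a single block, which is exactly what makes the direct-sum decomposition stable under the colimit.

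One caveat worth flagging in your write-up: ``finite rank'' in the literal sense (finite-dimensional image) is strictly weaker than ``factors through some $V_p(\mu|_p)$,'' since a rank-one endomorphism could have source spread across infinitely many weight spaces; what your colimit argument actually identifies $\dot U_q^{(m|n)}(\mathfrak{gl}(\infty))$ with is the algebra of \emph{finitary} endomorphisms (those annihilating all but finitely many weight spaces and with image in finitely many weight spaces), which is $\lim_\rightarrow \operatorname{End}_{\C(q)}(V_p(\mu|_p))$. This is clearly the paper's intended meaning of $\operatorname{End}_{fr}$ — the same usage appears in the introduction — so your proof establishes the correct statement; you just should not deduce ``finitely supported source'' from ``finite rank'' as if it were automatic, but rather take it as part of the definition of $\operatorname{End}_{fr}$ (or equivalently, read off from the fact that every element of $\dot U_q(\mathfrak{gl}(\infty))$ is a finite sum of terms $1_\lambda a 1_\mu$).
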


It seems the equivalence in Theorem \ref{thm:descriptionOfRep2} is the most natural way to think of the action of skew Howe duality, particularly from the point of view of categorification, as we shall see.

\subsection{Braiding}\label{sec:braiding}
As in the previous work of Cautis, Kamnitzer and Morrison \cite{Cautis2012} and the author \cite{Grant2014}, the functor $\dot U_q^{\infty}(\mathfrak{gl}(\infty))\to \operatorname{Rep}(\mathfrak{gl}(m|n))$ takes a braiding on $\dot U_q^{(m|n)}(\mathfrak{gl}(\infty))$ to a braiding on $\operatorname{Rep}(\mathfrak{gl}(m|n))$.

A braided monoidal category is a monoidal category equipped with a natural isomorphism from the bifunctor $-\otimes -$ to the bifunctor $-\otimes^{op}-$, satisfying the two equations
\[ \beta_{U\otimes V,W}=(\beta_{U,W}\otimes \id_V)\circ (\id_U\otimes \beta_{V,W}) \]
\[ \beta_{U,V\otimes W}= (\id_V\otimes \beta_{U,W})\circ(\beta_{U,V}\otimes \id_W) \]
for any objects $U,V,W$. These equations are called the \emph{hexagon equations}.

As mentioned in Section \ref{sec:definitions}, the category $\operatorname{Rep}(\mathfrak{gl}(m|n))$ is braided by the $R$-matrix.

There is also an action of the infinite braid group on $\dot U_q^{\infty}(\mathfrak{gl}(\infty))$ defined by
\[ 1_{s_i(\lambda)}T_i1_{\lambda}= q^{(m-n)\lambda_i\lambda_{i+1}-\lambda_i}\sum_{s=0}^\infty(-q)^{s} F_i^{(\lambda_{i}-\lambda_{i+1}-s)}E_i^{(s)}1_{\lambda}, \quad \lambda_i-\lambda_{i+1}\geq 0 \]
\[ 1_{s_i(\lambda)}T_i1_{\lambda}= q^{(m-n)\lambda_i\lambda_{i+1}-\lambda_i}\sum_{s=0}^\infty(-q)^{s} E_i^{(\lambda_{i+1}-\lambda_{i}+s)}F_i^{(s)}1_{\lambda}, \quad \lambda_i-\lambda_{i+1}\leq 0 \]
where $\lambda=(\lambda_1,\lambda_2,\ldots)$ and $s_i(\lambda)$ is $\lambda$ with the $i$th and $(i+1)$th entries swapped. These sums are finite due to the nilpotence of the $E_i$ and $F_i$ in $\dot U_q^\infty(\mathfrak{gl}(\infty))$.

Thus the image of $T_i1_\lambda$ under $\dot U_q^{\infty}(\mathfrak{gl}(\infty))\to \operatorname{Rep}(\mathfrak{gl}(m|n))$ can be taken as the definition of the braiding on $\operatorname{Rep}(\mathfrak{gl}(m|n))$.

\section{Categorified Quantum \texorpdfstring{$\mathfrak{gl}(p)$}{gl(p)}}\label{sec:catquantumgroup}
We give a categorification of $\dot U_q(\mathfrak{gl}(p))$, by an easy modification of Khovanov and Lauda's categorification \cite{Khovanov2010a} of $\dot U_q(\mathfrak{sl}(p))$. Rouquier \cite{Rouquier2008} gave a categorification of $\dot U_q(\mathfrak{sl}(p))$ in a slightly different way, but this was shown to be equivalent to the Khovanov and Lauda categorification by Brundan \cite{Brundan2015}.

\begin{definition} Let $\kk$ be a commutative unital ring. Given a choice of scalars $Q=\{t_{ij}\in \kk^\times\}$ where $t_{ij}\in \kk^\times$ such that $t_{ij}=t_{ji}$ when $j\neq i\pm 1$ and $t_{ii}=1$ for all $i$, the 2-category ${\mathscr{U}}_Q(\mathfrak{gl}(p))$ is defined with
\begin{itemize}
\item Objects: $1_{\lambda}$ for each $\lambda\in \Z^m$.
\item 1-morphisms: formal direct sums of $q^kE_i1_\lambda$ and $q^kF_i1_\lambda$ for $1\leq i\leq p-1$, with $k\in \Z$.
\item 2-morphisms: $\kk$-linear combinations of compositions of the following,

\begin{tabular}{cc}
\begin{tikzpicture}[baseline=-0.65ex]
\node (middle)[label=left:$\lambda+\alpha_i$,label=right:$\lambda$] {$\bullet$};
\node [below of=middle] (bottom) {};
\node [above of=middle] (top) {$i$};
\draw [->] (bottom) -- (top);
\end{tikzpicture}:$E_i1_\lambda\to q^2E_i1_{\lambda}$ & \begin{tikzpicture}[baseline=-0.65ex]
\node (middle)[label=left:$\lambda-\alpha_i$,label=right:$\lambda$] {$\bullet$};
\node [below of=middle] (bottom) {};
\node [above of=middle] (top) {$i$};
\draw [<-] (bottom) -- (top);
\end{tikzpicture}:$F_i1_\lambda\to q^2F_i1_{\lambda}$\\
\begin{tikzpicture}[baseline=-0.65ex]
\node (middle) {};
\node [above of=middle] (tl) {$i$};
\node [right of=tl] (tr) {$j$};
\node [below of=middle] (bl) {$j$};
\node [right of=bl] (br) {$i$};
\node [right of=middle, label=right:$\lambda$] (r) {};
\draw [->] (bl) to [out=90,in=270] (tr);
\draw [->] (br) to [out=90,in=270] (tl);
\end{tikzpicture}:$E_jE_i1_\lambda \to q^{-\alpha_i\cdot \alpha_j}E_iE_j1_\lambda$ &\begin{tikzpicture}[baseline=-0.65ex]
\node (middle) {};
\node [above of=middle] (tl) {$i$};
\node [right of=tl] (tr) {$j$};
\node [below of=middle] (bl) {$j$};
\node [right of=bl] (br) {$i$};
\node [right of=middle, label=right:$\lambda$] (r) {};
\draw [<-] (bl) to [out=90,in=270] (tr);
\draw [<-] (br) to [out=90,in=270] (tl);
\end{tikzpicture}:$F_jF_i1_\lambda \to q^{-\alpha_i\cdot \alpha_j}F_iF_j1_\lambda$ \\
\begin{tikzpicture}[baseline=-0.5cm]
\node (mid) {$i$};
\node [right of=mid] (r) {$i$};
\draw [<-] (mid) to [bend right=90] (r);
\end{tikzpicture}:$1_\lambda \to q^{1+\lambda_i-\lambda_{i+1}}E_iF_i1_\lambda$ & \begin{tikzpicture}[baseline=-0.5cm]
\node (mid) {$i$};
\node [right of=mid] (r) {$i$};
\draw [->] (mid) to [bend right=90] (r);
\end{tikzpicture}:$1_\lambda \to q^{1-\lambda_i+\lambda_{i+1}}F_iE_i1_\lambda$ \\
\begin{tikzpicture}[baseline=+0.25cm]
\node (mid) {$i$};
\node [right of=mid] (r) {$i$};
\draw [->] (mid) to [bend left=90] (r);
\end{tikzpicture}:$E_iF_i1_\lambda \to q^{1+\lambda_i-\lambda_{i+1}}1_\lambda$ & \begin{tikzpicture}[baseline=+0.25cm]
\node (mid) {$i$};
\node [right of=mid] (r) {$i$};
\draw [<-] (mid) to [bend left=90] (r);
\end{tikzpicture}:$F_iE_i1_\lambda \to q^{1-\lambda_i+\lambda_{i+1}}1_\lambda$
\end{tabular}

satisfying the relations as below.
\end{itemize}
\end{definition}

\subsubsection{Biadjointness of $E_i1_{\lambda}$ and $F_i1_\lambda$}
\[
\begin{tikzpicture}[baseline=-0.65ex,node distance=0.75cm]
\coordinate (mid) {};
\node [below of=mid] (bl) {$i$};
\node [above of=mid] (tl)  {$\lambda+\alpha_i$};
\coordinate[right of=bl] (bm) {};
\node [right of=bm] (br) {$\lambda$};
\coordinate [right of=tl] (tm) {};
\node [right of=tm] (tr) {$i$};
\coordinate [right of=mid] (mm) {};
\coordinate [right of=mm] (mr) {};
\draw [->] (bl) -- (mid) to [bend left=90] (mm) to [bend right=90] (mr) -- (tr);
\end{tikzpicture}=\begin{tikzpicture}[baseline=-0.65ex,node distance=0.75cm]
\coordinate (mid) {};
\node [below of=mid] (bl) {$\lambda+\alpha_i$};
\node [above of=mid] (tl)  {$i$};
\coordinate[right of=bl] (bm) {};
\node [right of=bm] (br) {$i$};
\coordinate [right of=tl] (tm) {};
\node [right of=tm] (tr) {$\lambda$};
\coordinate [right of=mid] (mm) {};
\coordinate [right of=mm] (mr) {};
\draw [->] (br) -- (mr) to [bend right=90] (mm) to [bend left=90] (mid) -- (tl);
\end{tikzpicture}= \begin{tikzpicture}[baseline=-0.65ex,node distance=0.75cm]
\node (middle)[label=left:$\lambda+\alpha_i$,label=right:$\lambda$] {};
\node [below of=middle] (bottom) {};
\node [above of=middle] (top) {$i$};
\draw [->] (bottom) -- (top);
\end{tikzpicture}, \quad\begin{tikzpicture}[baseline=-0.65ex,node distance=0.75cm]
\coordinate (mid) {};
\node [below of=mid] (bl) {$i$};
\node [above of=mid] (tl)  {$\lambda-\alpha_i$};
\coordinate[right of=bl] (bm) {};
\node [right of=bm] (br) {$\lambda$};
\coordinate [right of=tl] (tm) {};
\node [right of=tm] (tr) {$i$};
\coordinate [right of=mid] (mm) {};
\coordinate [right of=mm] (mr) {};
\draw [<-] (bl) -- (mid) to [bend left=90] (mm) to [bend right=90] (mr) -- (tr);
\end{tikzpicture}=\begin{tikzpicture}[baseline=-0.65ex,node distance=0.75cm]
\coordinate (mid) {};
\node [below of=mid] (bl) {$\lambda-\alpha_i$};
\node [above of=mid] (tl)  {$i$};
\coordinate[right of=bl] (bm) {};
\node [right of=bm] (br) {$i$};
\coordinate [right of=tl] (tm) {};
\node [right of=tm] (tr) {$\lambda$};
\coordinate [right of=mid] (mm) {};
\coordinate [right of=mm] (mr) {};
\draw [<-] (br) -- (mr) to [bend right=90] (mm) to [bend left=90] (mid) -- (tl);
\end{tikzpicture}= \begin{tikzpicture}[baseline=-0.65ex,node distance=0.75cm]
\node (middle)[label=left:$\lambda-\alpha_i$,label=right:$\lambda$] {};
\node [below of=middle] (bottom) {};
\node [above of=middle] (top) {$i$};
\draw [<-] (bottom) -- (top);
\end{tikzpicture} \]

\subsubsection{Cyclicity of 2-morphisms with respect to the biadjoint structure}
\[\begin{tikzpicture}[baseline=-0.65ex,node distance=0.75cm]
\coordinate (mid) {};
\node [below of=mid] (bl) {$i$};
\node [above of=mid] (tl)  {$\lambda+\alpha_i$};
\coordinate[right of=bl] (bm) {};
\node [right of=bm] (br) {$\lambda$};
\coordinate [right of=tl] (tm) {};
\node [right of=tm] (tr) {$i$};
\coordinate [right of=mid] (mm) {};
\coordinate [right of=mm] (mr) {};
\draw [->] (bl) -- (mid) to [bend left=90] (mm) to [bend right=90] (mr) -- (tr);
\filldraw (mm) circle (2pt);
\end{tikzpicture}=\begin{tikzpicture}[baseline=-0.65ex,node distance=0.75cm]
\coordinate (mid) {};
\node [below of=mid] (bl) {$\lambda+\alpha_i$};
\node [above of=mid] (tl)  {$i$};
\coordinate[right of=bl] (bm) {};
\node [right of=bm] (br) {$i$};
\coordinate [right of=tl] (tm) {};
\node [right of=tm] (tr) {$\lambda$};
\coordinate [right of=mid] (mm) {};
\coordinate [right of=mm] (mr) {};
\draw [->] (br) -- (mr) to [bend right=90] (mm) to [bend left=90] (mid) -- (tl);
\filldraw (mm) circle (2pt);
\end{tikzpicture}= \begin{tikzpicture}[baseline=-0.65ex,node distance=0.75cm]
\node (middle)[label=left:$\lambda+\alpha_i$,label=right:$\lambda$] {$\bullet$};
\node [below of=middle] (bottom) {};
\node [above of=middle] (top) {$i$};
\draw [->] (bottom) -- (top);
\end{tikzpicture}, \quad \begin{tikzpicture}[baseline=-0.65ex,node distance=0.75cm]
\coordinate (mid) {};
\node [below of=mid] (bl) {$i$};
\node [above of=mid] (tl)  {$\lambda-\alpha_i$};
\coordinate[right of=bl] (bm) {};
\node [right of=bm] (br) {$\lambda$};
\coordinate [right of=tl] (tm) {};
\node [right of=tm] (tr) {$i$};
\coordinate [right of=mid] (mm) {};
\coordinate [right of=mm] (mr) {};
\draw [<-] (bl) -- (mid) to [bend left=90] (mm) to [bend right=90] (mr) -- (tr);
\filldraw (mm) circle (2pt);
\end{tikzpicture}=\begin{tikzpicture}[baseline=-0.65ex,node distance=0.75cm]
\coordinate (mid) {};
\node [below of=mid] (bl) {$\lambda-\alpha_i$};
\node [above of=mid] (tl)  {$i$};
\coordinate[right of=bl] (bm) {};
\node [right of=bm] (br) {$i$};
\coordinate [right of=tl] (tm) {};
\node [right of=tm] (tr) {$\lambda$};
\coordinate [right of=mid] (mm) {};
\coordinate [right of=mm] (mr) {};
\draw [<-] (br) -- (mr) to [bend right=90] (mm) to [bend left=90] (mid) -- (tl);
\filldraw (mm) circle (2pt);
\end{tikzpicture}= \begin{tikzpicture}[baseline=-0.65ex,node distance=0.75cm]
\node (middle)[label=left:$\lambda-\alpha_i$,label=right:$\lambda$] {$\bullet$};
\node [below of=middle] (bottom) {};
\node [above of=middle] (top) {$i$};
\draw [<-] (bottom) -- (top);
\end{tikzpicture}\]

\[ \begin{tikzpicture}[baseline=-0.75cm,node distance=0.5cm]
\node (ttrrr) {$j$};
\coordinate [below of=ttrrr] (trrr);
\coordinate [below of=trrr] (rrr);
\node [left of=ttrrr] (ttrr) {$i$};
\coordinate [below of=ttrr,label={[label distance=0.2cm]left:$\lambda$}] (trr);
\coordinate [below of=trr] (rr);
\coordinate [left of=rr] (r);
\coordinate [left of=trr] (tr);
\coordinate [left of=r] (l);
\coordinate [left of=tr] (tl);
\coordinate [left of =tl] (tll);
\coordinate [left of=tll] (tlll);
\coordinate [below of=tll] (ll);
\node [below of=ll] (bll) {$i$};
\coordinate [below of=tlll] (lll);
\node [below of=lll] (blll) {$j$};
\draw [->] (blll) -- (tlll) to [out=90,in=90] (tr) to [out=270,in=90] (l) to [out=270,in=270] (rrr) -- (ttrrr);
\draw [->] (bll) -- (tll) to [out=90,in=90] (tl) to [out=270,in=90] (r) to [out=270,in=270] (rr) -- (ttrr);
\end{tikzpicture}
=
\begin{tikzpicture}[baseline=-0.5cm,node distance=0.5cm]
\node (tl) {$i$};
\node [right of=tl] (tr) {$j$};
\node [below of=tr] (mr) {};
\node [below of=tl] (ml) {};
\node [below of=mr] (br) {$i$};
\node [left of=br] (bl) {$j$};
\draw[<-] (tl) to [out=270,in=90] node [name=middle]{} (br);
\draw[<-] (tr) to [out=270,in=90] (bl);
\node [left of=middle] {$\lambda$};
\node [right of=middle] {};
\end{tikzpicture}
=
\begin{tikzpicture}[baseline=-0.75cm,node distance=0.5cm]
\node (ttlll) {$i$};
\coordinate [below of=ttlll] (tlll);
\coordinate [below of=tlll] (lll);
\node [right of=ttlll] (ttll) {$j$};
\coordinate [below of=ttll] (tll);
\coordinate [below of=tll] (ll);
\coordinate [right of=ll] (l);
\coordinate [right of=tll] (tl);
\coordinate [right of=l] (r);
\coordinate [right of=tl] (tr);
\coordinate [right of =tr] (trr);
\coordinate [right of=trr] (trrr);
\coordinate [below of=trr,label={[label distance=0.2cm]left:$\lambda$}] (rr);
\node [below of=rr] (brr) {$j$};
\coordinate [below of=trrr] (rrr);
\node [below of=rrr] (brrr) {$i$};
\draw [->] (brrr) -- (trrr) to [out=90,in=90] (tl) to [out=270,in=90] (r) to [out=270,in=270] (lll) -- (ttlll);
\draw [->] (brr) -- (trr) to [out=90,in=90] (tr) to [out=270,in=90] (l) to [out=270,in=270] (ll) -- (ttll);
\end{tikzpicture}, \quad \begin{tikzpicture}[baseline=-0.75cm,node distance=0.5cm]
\node (ttrrr) {$j$};
\coordinate [below of=ttrrr] (trrr);
\coordinate [below of=trrr] (rrr);
\node [left of=ttrrr] (ttrr) {$i$};
\coordinate [below of=ttrr,label={[label distance=0.2cm]left:$\lambda$}] (trr);
\coordinate [below of=trr] (rr);
\coordinate [left of=rr] (r);
\coordinate [left of=trr] (tr);
\coordinate [left of=r] (l);
\coordinate [left of=tr] (tl);
\coordinate [left of =tl] (tll);
\coordinate [left of=tll] (tlll);
\coordinate [below of=tll] (ll);
\node [below of=ll] (bll) {$i$};
\coordinate [below of=tlll] (lll);
\node [below of=lll] (blll) {$j$};
\draw [<-] (blll) -- (tlll) to [out=90,in=90] (tr) to [out=270,in=90] (l) to [out=270,in=270] (rrr) -- (ttrrr);
\draw [<-] (bll) -- (tll) to [out=90,in=90] (tl) to [out=270,in=90] (r) to [out=270,in=270] (rr) -- (ttrr);
\end{tikzpicture}
=
\begin{tikzpicture}[baseline=-0.5cm,node distance=0.5cm]
\node (tl) {$i$};
\node [right of=tl] (tr) {$j$};
\node [below of=tr] (mr) {};
\node [below of=tl] (ml) {};
\node [below of=mr] (br) {$i$};
\node [left of=br] (bl) {$j$};
\draw[->] (tl) to [out=270,in=90] node [name=middle]{} (br);
\draw[->] (tr) to [out=270,in=90] (bl);
\node [left of=middle] {$\lambda$};
\node [right of=middle] {};
\end{tikzpicture}
=
\begin{tikzpicture}[baseline=-0.75cm,node distance=0.5cm]
\node (ttlll) {$i$};
\coordinate [below of=ttlll] (tlll);
\coordinate [below of=tlll] (lll);
\node [right of=ttlll] (ttll) {$j$};
\coordinate [below of=ttll] (tll);
\coordinate [below of=tll] (ll);
\coordinate [right of=ll] (l);
\coordinate [right of=tll] (tl);
\coordinate [right of=l] (r);
\coordinate [right of=tl] (tr);
\coordinate [right of =tr] (trr);
\coordinate [right of=trr] (trrr);
\coordinate [below of=trr,label={[label distance=0.2cm]left:$\lambda$}] (rr);
\node [below of=rr] (brr) {$j$};
\coordinate [below of=trrr] (rrr);
\node [below of=rrr] (brrr) {$i$};
\draw [<-] (brrr) -- (trrr) to [out=90,in=90] (tl) to [out=270,in=90] (r) to [out=270,in=270] (lll) -- (ttlll);
\draw [<-] (brr) -- (trr) to [out=90,in=90] (tr) to [out=270,in=90] (l) to [out=270,in=270] (ll) -- (ttll);
\end{tikzpicture}\]

\[
\begin{tikzpicture}[baseline=-0.65ex,node distance=0.5cm]
\coordinate (mid) {};
\coordinate [above left of=mid] (tl);
\coordinate [above right of=mid] (tr);
\coordinate [below left of=mid] (bl);
\coordinate [below right of=mid,label=below left:$\lambda$] (br);
\coordinate [left of=tl] (tll);
\coordinate [left of=bl] (bll);
\node [below of=bll] (bbll) {$j$};
\node [above of=tr] (ttr) {$i$};
\node [right of=ttr] (ttrr) {$j$};
\coordinate [right of=br] (brr);
\node [right of=bbll] (bbl) {$i$};
\draw [<-] (bbll) -- (tll) to [bend left=90] (tl) to [out=270,in=135] (mid) to [out=-45,in=90] (br) to [bend right=90] (brr) -- (ttrr);
\draw [->] (bbl) -- (bl) to [out=90,in=225] (mid) to [out=45,in=270] (tr) -- (ttr);
\end{tikzpicture}= t_{ji} \begin{tikzpicture}[baseline=-0.65ex,node distance=0.5cm]
\coordinate (mid) {};
\coordinate [above left of=mid] (tl);
\coordinate [above right of=mid,label=above left:$\lambda$] (tr);
\coordinate [below left of=mid] (bl);
\coordinate [below right of=mid] (br);
\coordinate [left of=tl] (tll);
\coordinate [left of=bl] (bll);
\coordinate [below of=bll] (bbll);
\coordinate [above of=tr] (ttr);
\coordinate [right of=tr] (trr);
\coordinate [right of=ttr] (ttrr);
\coordinate [right of=br] (brr);
\coordinate [right of=bbll] (bbl);
\node [below of=br] (bbr) {$j$};
\node [right of=bbr] (bbrr) {$i$};
\node [above of=tl] (ttl) {$j$};
\node [left of=ttl] (ttll) {$i$};
\draw [->] (bbrr) -- (trr) to [bend right=90] (tr) to [out=270,in=45] (mid) to [out=225,in=90] (bl) to [bend left=90] (bll) -- (ttll);
\draw [<-] (bbr) -- (br) to [out=90,in=-45] (mid) to [out=135,in=270] (tl) -- (ttl);
\end{tikzpicture} 
=
\begin{tikzpicture}[baseline=-0.65ex,node distance=0.5cm]
\node (middle) {};
\node [above of=middle] (tl) {$i$};
\node [right of=tl] (tr) {$j$};
\node [below of=middle] (bl) {$j$};
\node [right of=bl] (br) {$i$};
\node [right of=middle, label=right:$\lambda$] (r) {};
\draw [<-] (bl) to [out=90,in=270] (tr);
\draw [->] (br) to [out=90,in=270] (tl);
\end{tikzpicture},\quad  \begin{tikzpicture}[baseline=-0.65ex,node distance=0.5cm]
\coordinate (mid) {};
\coordinate [above left of=mid] (tl);
\coordinate [above right of=mid,label=above left:$\lambda$] (tr);
\coordinate [below left of=mid] (bl);
\coordinate [below right of=mid] (br);
\coordinate [left of=tl] (tll);
\coordinate [left of=bl] (bll);
\coordinate [below of=bll] (bbll);
\coordinate [above of=tr] (ttr);
\coordinate [right of=tr] (trr);
\coordinate [right of=ttr] (ttrr);
\coordinate [right of=br] (brr);
\coordinate [right of=bbll] (bbl);
\node [below of=br] (bbr) {$j$};
\node [right of=bbr] (bbrr) {$i$};
\node [above of=tl] (ttl) {$j$};
\node [left of=ttl] (ttll) {$i$};
\draw [<-] (bbrr) -- (trr) to [bend right=90] (tr) to [out=270,in=45] (mid) to [out=225,in=90] (bl) to [bend left=90] (bll) -- (ttll);
\draw [->] (bbr) -- (br) to [out=90,in=-45] (mid) to [out=135,in=270] (tl) -- (ttl);
\end{tikzpicture}=t_{ij}
\begin{tikzpicture}[baseline=-0.65ex,node distance=0.5cm]
\coordinate (mid) {};
\coordinate [above left of=mid] (tl);
\coordinate [above right of=mid] (tr);
\coordinate [below left of=mid] (bl);
\coordinate [below right of=mid,label=below left:$\lambda$] (br);
\coordinate [left of=tl] (tll);
\coordinate [left of=bl] (bll);
\node [below of=bll] (bbll) {$j$};
\node [above of=tr] (ttr) {$i$};
\node [right of=ttr] (ttrr) {$j$};
\coordinate [right of=br] (brr);
\node [right of=bbll] (bbl) {$i$};
\draw [->] (bbll) -- (tll) to [bend left=90] (tl) to [out=270,in=135] (mid) to [out=-45,in=90] (br) to [bend right=90] (brr) -- (ttrr);
\draw [<-] (bbl) -- (bl) to [out=90,in=225] (mid) to [out=45,in=270] (tr) -- (ttr);
\end{tikzpicture} = \begin{tikzpicture}[baseline=-0.65ex,node distance=0.5cm]
\node (middle) {};
\node [above of=middle] (tl) {$i$};
\node [right of=tl] (tr) {$j$};
\node [below of=middle] (bl) {$j$};
\node [right of=bl] (br) {$i$};
\node [right of=middle, label=right:$\lambda$] (r) {};
\draw [->] (bl) to [out=90,in=270] (tr);
\draw [<-] (br) to [out=90,in=270] (tl);
\end{tikzpicture}\]

\subsubsection{Action of KLR algebra on $E_i1_\lambda$}
\[ \begin{tikzpicture}[baseline=-0.65ex]
\node (tl) {};
\node [above of=tl] (ttl) {$i$};
\node [right of=ttl] (ttr) {$j$};
\node [below of=tl] (ml) {$i$};
\node [below of=ttr,label=right:$\lambda$] (tr) {};
\node [below of=tr] (mr) {$j$};
\draw [<-] (ttl) to [out=270,in=90] (tr) to [out=270,in=90] (ml);
\draw [<-] (ttr) to [out=270,in=90] (tl) to [out=270,in=90] (mr);
\end{tikzpicture}
=
\left\lbrace
\begin{array}{lr}
0 & \mathrm{if }\, i=j \\
t_{ij}\begin{tikzpicture}[baseline=-0.65ex]
\node (ml) {};
\node [above of=ml] (tl) {$i$};
\node [right of=tl] (tr) {$j$};
\node [below of=ml] (bl) {};
\node [below of=tr,label=right:$\lambda$] (mr) {};
\node [below of=mr] (br) {};
\draw [<-] (tr) -- (br);
\draw [<-] (tl) -- (bl);
\end{tikzpicture} & \mathrm{if }\, |i-j|>1 \\
t_{ij}\begin{tikzpicture}[baseline=-0.65ex]
\node (ml) {$\bullet$};
\node [above of=ml] (tl) {$i$};
\node [right of=tl] (tr) {$j$};
\node [below of=ml] (bl) {};
\node [below of=tr,label=right:$\lambda$] (mr) {};
\node [below of=mr] (br) {};
\draw [<-] (tr) -- (br);
\draw [<-] (tl) -- (bl);
\end{tikzpicture} + t_{ji}\begin{tikzpicture}[baseline=-0.65ex]
\node (ml) {};
\node [above of=ml] (tl) {$i$};
\node [right of=tl] (tr) {$j$};
\node [below of=ml] (bl) {};
\node [below of=tr,label=right:$\lambda$] (mr) {$\bullet$};
\node [below of=mr] (br) {};
\draw [<-] (tr) -- (br);
\draw [<-] (tl) -- (bl);
\end{tikzpicture}
& \mathrm{if }\, |i-j|=1
\end{array}
\right.
\]

\[
\begin{tikzpicture}[baseline=-0.65ex,node distance=0.5cm]
\node (ml) {};
\node [above of=ml] (tl) {$j$};
\node [right of=tl] (tr) {$i$};
\node [below of=tr,label=right:$\lambda$] (mr) {};
\node [below of=mr] (br) {$j$};
\node [left of=br] (bl) {$i$};
\draw [<-] (tl) to [out=270,in=90] node [name=middle,label=above left:$\bullet$]{} (br);
\draw [<-] (tr) to [out=270,in=90] (bl);
\node [right of=middle] {};
\node [left of=middle] {};
\end{tikzpicture} = \begin{tikzpicture}[baseline=-0.65ex,node distance=0.5cm]
\node (ml) {};
\node [above of=ml] (tl) {$j$};
\node [right of=tl] (tr) {$i$};
\node [below of=tr,label=right:$\lambda$] (mr) {};
\node [below of=mr] (br) {$j$};
\node [left of=br] (bl) {$i$};
\draw [<-] (tl) to [out=270,in=90] node [name=middle,label=below right:$\bullet$]{} (br);
\draw [<-] (tr) to [out=270,in=90] (bl);
\node [right of=middle] {};
\node [left of=middle] {};
\end{tikzpicture}
\quad \quad
\begin{tikzpicture}[baseline=-0.65ex,node distance=0.5cm]
\node (ml) {};
\node [above of=ml] (tl) {$j$};
\node [right of=tl] (tr) {$i$};
\node [below of=tr,label=right:$\lambda$] (mr) {};
\node [below of=mr] (br) {$j$};
\node [left of=br] (bl) {$i$};
\draw [<-] (tl) to [out=270,in=90] node [name=middle,label=below left:$\bullet$]{} (br);
\draw [<-] (tr) to [out=270,in=90] (bl);
\node [right of=middle] {};
\node [left of=middle] {};
\end{tikzpicture} = \begin{tikzpicture}[baseline=-0.65ex,node distance=0.5cm]
\node (ml) {};
\node [above of=ml] (tl) {$j$};
\node [right of=tl] (tr) {$i$};
\node [below of=tr,label=right:$\lambda$] (mr) {};
\node [below of=mr] (br) {$j$};
\node [left of=br] (bl) {$i$};
\draw [<-] (tl) to [out=270,in=90] node [name=middle,label=above right:$\bullet$]{} (br);
\draw [<-] (tr) to [out=270,in=90] (bl);
\node [right of=middle] {};
\node [left of=middle] {};
\end{tikzpicture} \quad \quad \mathrm{i\neq j}
\]

\[ 
\begin{tikzpicture}[baseline=-0.65ex,node distance=0.5cm]
\node (ml) {};
\node [above of=ml] (tl) {$i$};
\node [right of=tl] (tr) {$i$};
\node [below of=tr,label=right:$\lambda$] (mr) {};
\node [below of=mr] (br) {$i$};
\node [left of=br] (bl) {$i$};
\draw [<-] (tl) to [out=270,in=90] node [name=middle,label=below left:$\bullet$]{} (br);
\draw [<-] (tr) to [out=270,in=90] (bl);
\node [right of=middle] {};
\node [left of=middle] {};
\end{tikzpicture} - \begin{tikzpicture}[baseline=-0.65ex,node distance=0.5cm]
\node (ml) {};
\node [above of=ml] (tl) {$i$};
\node [right of=tl] (tr) {$i$};
\node [below of=tr,label=right:$\lambda$] (mr) {};
\node [below of=mr] (br) {$i$};
\node [left of=br] (bl) {$i$};
\draw [<-] (tl) to [out=270,in=90] node [name=middle,label=above right:$\bullet$]{} (br);
\draw [<-] (tr) to [out=270,in=90] (bl);
\node [right of=middle] {};
\node [left of=middle] {};
\end{tikzpicture} =
\begin{tikzpicture}[baseline=-0.65ex,node distance=0.5cm]
\node (ml) {};
\node [above of=ml] (tl) {$i$};
\node [right of=tl] (tr) {$i$};
\node [below of=tr,label=right:$\lambda$] (mr) {};
\node [below of=mr] (br) {$i$};
\node [left of=br] (bl) {$i$};
\draw [<-] (tl) to [out=270,in=90] node [name=middle,label=above left:$\bullet$]{} (br);
\draw [<-] (tr) to [out=270,in=90] (bl);
\node [right of=middle] {};
\node [left of=middle] {};
\end{tikzpicture} - \begin{tikzpicture}[baseline=-0.65ex,node distance=0.5cm]
\node (ml) {};
\node [above of=ml] (tl) {$i$};
\node [right of=tl] (tr) {$i$};
\node [below of=tr,label=right:$\lambda$] (mr) {};
\node [below of=mr] (br) {$i$};
\node [left of=br] (bl) {$i$};
\draw [<-] (tl) to [out=270,in=90] node [name=middle,label=below right:$\bullet$]{} (br);
\draw [<-] (tr) to [out=270,in=90] (bl);
\node [right of=middle] {};
\node [left of=middle] {};
\end{tikzpicture} = \begin{tikzpicture}[baseline=-0.65ex,node distance=0.5cm]
\node (ml) {};
\node [above of=ml] (tl) {$i$};
\node [right of=tl] (tr) {$i$};
\node [below of=ml] (bl) {};
\node [below of=tr,label=right:$\lambda$] (mr) {};
\node [below of=mr] (br) {};
\draw [<-] (tr) -- (br);
\draw [<-] (tl) -- (bl);
\end{tikzpicture}
\]

\[\begin{tikzpicture}[baseline=-0.65ex]
\node (ml) {};
\node [right of=ml] (mm) {};
\node [right of=mm,label=right:$\lambda$] (mr) {};
\node [above of=ml] (tl) {$i$};
\node [above of=mm] (tm) {$j$};
\node [above of=mr] (tr) {$i$};
\node [below of=ml] (bl) {$i$};
\node [below of=mm] (bm) {$j$};
\node [below of=mr] (br) {$i$};
\draw [<-] (tl) to [out=270,in=90] (br);
\draw [<-] (tr) to [out=270,in=90] (bl);
\draw [<-] (tm) to [out=270,in=90] (ml) to [out=270,in=90] (bm);
\end{tikzpicture}
- \begin{tikzpicture}[baseline=-0.65ex]
\node (ml) {};
\node [right of=ml] (mm) {};
\node [right of=mm,label=right:$\lambda$] (mr) {};
\node [above of=ml] (tl) {$i$};
\node [above of=mm] (tm) {$j$};
\node [above of=mr] (tr) {$i$};
\node [below of=ml] (bl) {$i$};
\node [below of=mm] (bm) {$j$};
\node [below of=mr] (br) {$i$};
\draw [<-] (tl) to [out=270,in=90] (br);
\draw [<-] (tr) to [out=270,in=90] (bl);
\draw [<-] (tm) to [out=270,in=90] (mr) to [out=270,in=90] (bm);
\end{tikzpicture} =
t_{ij}\begin{tikzpicture}[baseline=-0.65ex]
\node (ml) {};
\node [right of=ml] (mm) {};
\node [right of=mm,label=right:$\lambda$] (mr) {};
\node [above of=ml] (tl) {$i$};
\node [above of=mm] (tm) {$j$};
\node [above of=mr] (tr) {$i$};
\node [below of=ml] (bl) {$i$};
\node [below of=mm] (bm) {$j$};
\node [below of=mr] (br) {$i$};
\draw [<-] (tl) -- (bl);
\draw [<-] (tm) -- (bm);
\draw [<-] (tr) -- (br);
\end{tikzpicture} \quad \mathrm{if } \, |i-j|=1 
\] 

\[ \begin{tikzpicture}[baseline=-0.65ex]
\node (ml) {};
\node [right of=ml] (mm) {};
\node [right of=mm,label=right:$\lambda$] (mr) {};
\node [above of=ml] (tl) {$i$};
\node [above of=mm] (tm) {$j$};
\node [above of=mr] (tr) {$k$};
\node [below of=ml] (bl) {$k$};
\node [below of=mm] (bm) {$j$};
\node [below of=mr] (br) {$i$};
\draw [<-] (tl) to [out=270,in=90] (br);
\draw [<-] (tr) to [out=270,in=90] (bl);
\draw [<-] (tm) to [out=270,in=90] (ml) to [out=270,in=90] (bm);
\end{tikzpicture}
=  \begin{tikzpicture}[baseline=-0.65ex]
\node (ml) {};
\node [right of=ml] (mm) {};
\node [right of=mm,label=right:$\lambda$] (mr) {};
\node [above of=ml] (tl) {$i$};
\node [above of=mm] (tm) {$j$};
\node [above of=mr] (tr) {$k$};
\node [below of=ml] (bl) {$k$};
\node [below of=mm] (bm) {$j$};
\node [below of=mr] (br) {$i$};
\draw [<-] (tl) to [out=270,in=90] (br);
\draw [<-] (tr) to [out=270,in=90] (bl);
\draw [<-] (tm) to [out=270,in=90] (mr) to [out=270,in=90] (bm);
\end{tikzpicture} \quad \mathrm{if } i\neq k\, \mathrm{or} |i-j|\neq 1 \]

\subsubsection{Mixed relations between $E_iF_j1_\lambda$ and $F_jE_i1_\lambda$ for $i\neq j$}
\[ \begin{tikzpicture}[baseline=-0.65ex]
\node (tl) {};
\node [above of=tl] (ttl) {$i$};
\node [right of=ttl] (ttr) {$j$};
\node [below of=tl] (ml) {$i$};
\node [below of=ttr,label=right:$\lambda$] (tr) {};
\node [below of=tr] (mr) {$j$};
\draw [<-] (ttl) to [out=270,in=90] (tr) to [out=270,in=90] (ml);
\draw [->] (ttr) to [out=270,in=90] (tl) to [out=270,in=90] (mr);
\end{tikzpicture}= t_{ji} \begin{tikzpicture}[baseline=-0.65ex]
\node (ml) {};
\node [above of=ml] (tl) {$i$};
\node [right of=tl] (tr) {$j$};
\node [below of=ml] (bl) {};
\node [below of=tr,label=right:$\lambda$] (mr) {};
\node [below of=mr] (br) {};
\draw [->] (tr) -- (br);
\draw [<-] (tl) -- (bl);
\end{tikzpicture} 
\quad 
\begin{tikzpicture}[baseline=-0.65ex]
\node (tl) {};
\node [above of=tl] (ttl) {$i$};
\node [right of=ttl] (ttr) {$j$};
\node [below of=tl] (ml) {$i$};
\node [below of=ttr,label=right:$\lambda$] (tr) {};
\node [below of=tr] (mr) {$j$};
\draw [->] (ttl) to [out=270,in=90] (tr) to [out=270,in=90] (ml);
\draw [<-] (ttr) to [out=270,in=90] (tl) to [out=270,in=90] (mr);
\end{tikzpicture}= t_{ij} \begin{tikzpicture}[baseline=-0.65ex]
\node (ml) {};
\node [above of=ml] (tl) {$i$};
\node [right of=tl] (tr) {$j$};
\node [below of=ml] (bl) {};
\node [below of=tr,label=right:$\lambda$] (mr) {};
\node [below of=mr] (br) {};
\draw [<-] (tr) -- (br);
\draw [->] (tl) -- (bl);
\end{tikzpicture} \]

\subsubsection{Bubble relations}
\[ \begin{tikzpicture}[node distance=0.5cm,baseline=-0.65ex]
\node (m) {};
\draw[decoration={markings,mark=at position 0.5 with {\arrow{>}}},postaction={decorate}] (m) circle [radius=0.5cm];
\node [below right of=m,label={[label distance=-0.3cm]below right:$\alpha$}] (dot) {$\bullet$};
\node [left of=m,label=left:$i$] (colour) {};
\node [above right of=m, label={[label distance=0.1cm]above right:$\lambda$}] (weight) {};
\end{tikzpicture} = 0 \, \mathrm{if }\, \alpha<-\bar\lambda-1 \quad \begin{tikzpicture}[node distance=0.5cm,baseline=-0.65ex]
\node (m) {};
\draw[decoration={markings,mark=at position 0.5 with {\arrow{<}}},postaction={decorate}] (m) circle [radius=0.5cm];
\node [below right of=m,label={[label distance=-0.3cm]below right:$\alpha$}] (dot) {$\bullet$};
\node [left of=m,label=left:$i$] (colour) {};
\node [above right of=m, label={[label distance=0.1cm]above right:$\lambda$}] (weight) {};
\end{tikzpicture} = 0 \,\mathrm{if }\, \alpha<\bar\lambda-1 \]

\[ \begin{tikzpicture}[node distance=0.5cm,baseline=-0.65ex]
\node (m) {};
\draw[decoration={markings,mark=at position 0.5 with {\arrow{>}}},postaction={decorate}] (m) circle [radius=0.5cm];
\node [below right of=m,label={[label distance=-0.3cm]below right:$-\bar\lambda_i-1$}] (dot) {$\bullet$};
\node [left of=m,label=left:$i$] (colour) {};
\node [above right of=m, label={[label distance=0.1cm]above right:$\lambda$}] (weight) {};
\end{tikzpicture} = \id_{1_{\lambda}}, \quad \begin{tikzpicture}[node distance=0.5cm,baseline=-0.65ex]
\node (m) {};
\draw[decoration={markings,mark=at position 0.5 with {\arrow{<}}},postaction={decorate}] (m) circle [radius=0.5cm];
\node [below right of=m,label={[label distance=-0.3cm]below right:$\bar\lambda_i-1$}] (dot) {$\bullet$};
\node [left of=m,label=left:$i$] (colour) {};
\node [above right of=m, label={[label distance=0.1cm]above right:$\lambda$}] (weight) {};
\end{tikzpicture} = \id_{1_\lambda} \]

\[
\left(\sum_{r=0}^\infty \begin{tikzpicture}[node distance=0.5cm,baseline=-0.65ex]
\node (m) {};
\draw[decoration={markings,mark=at position 0.5 with {\arrow{>}}},postaction={decorate}] (m) circle [radius=0.5cm];
\node [below right of=m,label={[label distance=-0.3cm]below right:$-\bar{\lambda}_i+r-1$}] (dot) {$\bullet$};
\node [left of=m,label=left:$i$] (colour) {};
\node [above right of=m, label={[label distance=0.1cm]above right:$\lambda$}] (weight) {};
\end{tikzpicture}t^{r}\right)
\left(\sum_{s=0}^\infty \begin{tikzpicture}[node distance=0.5cm,baseline=-0.65ex]
\node (m) {};
\draw[decoration={markings,mark=at position 0.5 with {\arrow{<}}},postaction={decorate}] (m) circle [radius=0.5cm];
\node [below right of=m,label={[label distance=-0.3cm]below right:$\bar{\lambda}_i+s-1$}] (dot) {$\bullet$};
\node [left of=m,label=left:$i$] (colour) {};
\node [above right of=m, label={[label distance=0.1cm]above right:$\lambda$}] (weight) {};
\end{tikzpicture}t^s \right) =\id_{1_\lambda}
\]

\subsubsection{Extended $\mathfrak{sl}_2$ relations with $\lambda_i-\lambda_{i+1}>0$}
\[ \begin{tikzpicture}[baseline=-0.65ex,node distance=0.5cm]
\coordinate (m);
\coordinate [below right of=m] (br);
\coordinate [below left of=m] (bl);
\coordinate [above left of=m] (tl);
\coordinate [above right of=m] (tr);
\coordinate [right of=tr,label=below right:$\lambda$] (trr);
\coordinate [right of=br] (brr);
\node [above of=tl] (ttl) {$i$};
\node [below of=bl] (bbl) {$i$};
\draw [->] (bbl) -- (bl) to [out=90,in=225] (tr);
\draw [->] (tr) to [out=45,in=90] (trr) to [bend left=10] (brr) to [out=270,in=315] (br) to [out=135,in=270] (tl) -- (ttl);
\end{tikzpicture}=0, \quad \begin{tikzpicture}[baseline=-0.65ex]
\node (tl) {};
\node [above of=tl] (ttl) {$i$};
\node [right of=ttl] (ttr) {$i$};
\node [below of=tl] (ml) {$i$};
\node [below of=ttr,label=right:$\lambda$] (tr) {};
\node [below of=tr] (mr) {$i$};
\draw [->] (ttl) to [out=270,in=90] (tr) to [out=270,in=90] (ml);
\draw [<-] (ttr) to [out=270,in=90] (tl) to [out=270,in=90] (mr);
\end{tikzpicture}= - \begin{tikzpicture}[baseline=-0.65ex]
\node (ml) {};
\node [above of=ml] (tl) {$i$};
\node [right of=tl] (tr) {$i$};
\node [below of=ml] (bl) {};
\node [below of=tr,label=right:$\lambda$] (mr) {};
\node [below of=mr] (br) {};
\draw [<-] (tr) -- (br);
\draw [->] (tl) -- (bl);
\end{tikzpicture} \]

\[ \begin{tikzpicture}[baseline=-0.65ex]
\node (tl) {};
\node [above of=tl] (ttl) {$i$};
\node [right of=ttl] (ttr) {$i$};
\node [below of=tl] (ml) {$i$};
\node [below of=ttr,label=right:$\lambda$] (tr) {};
\node [below of=tr] (mr) {$i$};
\draw [<-] (ttl) to [out=270,in=90] (tr) to [out=270,in=90] (ml);
\draw [->] (ttr) to [out=270,in=90] (tl) to [out=270,in=90] (mr);
\end{tikzpicture}= - \begin{tikzpicture}[baseline=-0.65ex]
\node (ml) {};
\node [above of=ml] (tl) {$i$};
\node [right of=tl] (tr) {$i$};
\node [below of=ml] (bl) {};
\node [below of=tr,label=right:$\lambda$] (mr) {};
\node [below of=mr] (br) {};
\draw [->] (tr) -- (br);
\draw [<-] (tl) -- (bl);
\end{tikzpicture} + \sum_{f_1+f_2+f_3=\bar\lambda_i-1} \begin{tikzpicture}[baseline=-0.65ex]
\node [label=left:$\lambda$] (ml) {};
\node [above of=ml] (tl) {$i$};
\node [right of=tl] (tr) {$i$};
\node [below of=ml] (bl) {$i$};
\node [below of=tr] (mr) {};
\node [below of=mr] (br) {$i$};
\draw [->] (bl) to [bend left=90] node [pos=0.3,label=above left:$f_3$] {$\bullet$} (br);
\draw [->] (tr) to [bend left=90] node [pos=0.7,label=below left:$f_1$] {$\bullet$} (tl);
\draw[decoration={markings,mark=at position 0.5 with {\arrow{<}}},postaction={decorate}] (mr) circle [radius=0.5cm];
\node [below right = 0.07 of mr,label={[label distance=-0.3cm]below right:$-\bar{\lambda}_i+f_2-1$}] (dot) {$\bullet$};
\end{tikzpicture}
\]
where $\bar{\lambda}_i=\lambda_i-\lambda_{i+1}$.

\subsubsection{Extended $\mathfrak{sl}_2$ relations with $\lambda_i-\lambda_{i+1}<0$}
\[ \begin{tikzpicture}[baseline=-0.65ex,node distance=0.5cm]
\coordinate (m);
\coordinate [below right of=m] (br);
\coordinate [below left of=m] (bl);
\coordinate [above left of=m] (tl);
\coordinate [above right of=m,label=below right:$\lambda$] (tr);
\coordinate [right of=tr] (trr);
\coordinate [right of=br] (brr);
\node [above of=tr] (ttr) {$i$};
\node [below of=br] (bbr) {$i$};
\draw [->] (bbr) -- (br) to [out=90,in=315] (tl);
\draw [->] (tl) to [out=135,in=90] (tll) to [bend right=10] (bll) to [out=270,in=225] (bl) to [out=45,in=270] (tr) -- (ttr);
\end{tikzpicture}=0, \quad \begin{tikzpicture}[baseline=-0.65ex]
\node (tl) {};
\node [above of=tl] (ttl) {$i$};
\node [right of=ttl] (ttr) {$i$};
\node [below of=tl] (ml) {$i$};
\node [below of=ttr,label=right:$\lambda$] (tr) {};
\node [below of=tr] (mr) {$i$};
\draw [<-] (ttl) to [out=270,in=90] (tr) to [out=270,in=90] (ml);
\draw [->] (ttr) to [out=270,in=90] (tl) to [out=270,in=90] (mr);
\end{tikzpicture}= - \begin{tikzpicture}[baseline=-0.65ex]
\node (ml) {};
\node [above of=ml] (tl) {$i$};
\node [right of=tl] (tr) {$i$};
\node [below of=ml] (bl) {};
\node [below of=tr,label=right:$\lambda$] (mr) {};
\node [below of=mr] (br) {};
\draw [->] (tr) -- (br);
\draw [<-] (tl) -- (bl);
\end{tikzpicture} \]

\[ \begin{tikzpicture}[baseline=-0.65ex]
\node (tl) {};
\node [above of=tl] (ttl) {$i$};
\node [right of=ttl] (ttr) {$i$};
\node [below of=tl] (ml) {$i$};
\node [below of=ttr,label=right:$\lambda$] (tr) {};
\node [below of=tr] (mr) {$i$};
\draw [->] (ttl) to [out=270,in=90] (tr) to [out=270,in=90] (ml);
\draw [<-] (ttr) to [out=270,in=90] (tl) to [out=270,in=90] (mr);
\end{tikzpicture}= - \begin{tikzpicture}[baseline=-0.65ex]
\node (ml) {};
\node [above of=ml] (tl) {$i$};
\node [right of=tl] (tr) {$i$};
\node [below of=ml] (bl) {};
\node [below of=tr,label=right:$\lambda$] (mr) {};
\node [below of=mr] (br) {};
\draw [<-] (tr) -- (br);
\draw [->] (tl) -- (bl);
\end{tikzpicture} + \sum_{f_1+f_2+f_3=-\bar\lambda_i-1} \begin{tikzpicture}[baseline=-0.65ex]
\node [label=left:$\lambda$] (ml) {};
\node [above of=ml] (tl) {$i$};
\node [right of=tl] (tr) {$i$};
\node [below of=ml] (bl) {$i$};
\node [below of=tr] (mr) {};
\node [below of=mr] (br) {$i$};
\draw [<-] (bl) to [bend left=90] node [pos=0.3,label=above left:$f_3$] {$\bullet$} (br);
\draw [<-] (tr) to [bend left=90] node [pos=0.7,label=below left:$f_1$] {$\bullet$} (tl);
\draw[decoration={markings,mark=at position 0.5 with {\arrow{>}}},postaction={decorate}] (mr) circle [radius=0.5cm];
\node [below right = 0.07 of mr,label={[label distance=-0.3cm]below right:$\bar{\lambda}_i+f_2-1$}] (dot) {$\bullet$};
\end{tikzpicture}
\]
where $\bar{\lambda}_i=\lambda_i-\lambda_{i+1}$.

\subsubsection{Extended $\mathfrak{sl}_2$ relations with $\lambda_i-\lambda_{i+1}=0$}
\[ \begin{tikzpicture}[baseline=-0.65ex]
\node (tl) {};
\node [above of=tl] (ttl) {$i$};
\node [right of=ttl] (ttr) {$i$};
\node [below of=tl] (ml) {$i$};
\node [below of=ttr,label=right:$\lambda$] (tr) {};
\node [below of=tr] (mr) {$i$};
\draw [->] (ttl) to [out=270,in=90] (tr) to [out=270,in=90] (ml);
\draw [<-] (ttr) to [out=270,in=90] (tl) to [out=270,in=90] (mr);
\end{tikzpicture}= - \begin{tikzpicture}[baseline=-0.65ex]
\node (ml) {};
\node [above of=ml] (tl) {$i$};
\node [right of=tl] (tr) {$i$};
\node [below of=ml] (bl) {};
\node [below of=tr,label=right:$\lambda$] (mr) {};
\node [below of=mr] (br) {};
\draw [<-] (tr) -- (br);
\draw [->] (tl) -- (bl);
\end{tikzpicture} \quad \begin{tikzpicture}[baseline=-0.65ex]
\node (tl) {};
\node [above of=tl] (ttl) {$i$};
\node [right of=ttl] (ttr) {$i$};
\node [below of=tl] (ml) {$i$};
\node [below of=ttr,label=right:$\lambda$] (tr) {};
\node [below of=tr] (mr) {$i$};
\draw [<-] (ttl) to [out=270,in=90] (tr) to [out=270,in=90] (ml);
\draw [->] (ttr) to [out=270,in=90] (tl) to [out=270,in=90] (mr);
\end{tikzpicture}= - \begin{tikzpicture}[baseline=-0.65ex]
\node (ml) {};
\node [above of=ml] (tl) {$i$};
\node [right of=tl] (tr) {$i$};
\node [below of=ml] (bl) {};
\node [below of=tr,label=right:$\lambda$] (mr) {};
\node [below of=mr] (br) {};
\draw [->] (tr) -- (br);
\draw [<-] (tl) -- (bl);
\end{tikzpicture}
\]

\[\begin{tikzpicture}[baseline=-0.65ex,node distance=0.5cm]
\coordinate (m);
\coordinate [below right of=m] (br);
\coordinate [below left of=m] (bl);
\coordinate [above left of=m] (tl);
\coordinate [above right of=m,label=below right:$\lambda$] (tr);
\coordinate [right of=tr] (trr);
\coordinate [right of=br] (brr);
\node [above of=tr] (ttr) {$i$};
\node [below of=br] (bbr) {$i$};
\draw [->] (bbr) -- (br) to [out=90,in=315] (tl);
\draw [->] (tl) to [out=135,in=90] (tll) to [bend right=10] (bll) to [out=270,in=225] (bl) to [out=45,in=270] (tr) -- (ttr);
\end{tikzpicture}= \begin{tikzpicture}[baseline=-0.65ex]
\coordinate [label=right:$\lambda$] (m);
\node [above of=m] (t) {$i$};
\node [below of=m] (b) {$i$};
\draw [->] (b) -- (t);
\end{tikzpicture} = -\begin{tikzpicture}[baseline=-0.65ex,node distance=0.5cm]
\coordinate (m);
\coordinate [below right of=m] (br);
\coordinate [below left of=m] (bl);
\coordinate [above left of=m] (tl);
\coordinate [above right of=m] (tr);
\coordinate [right of=tr,label=below right:$\lambda$] (trr);
\coordinate [right of=br] (brr);
\node [above of=tl] (ttl) {$i$};
\node [below of=bl] (bbl) {$i$};
\draw [->] (bbl) -- (bl) to [out=90,in=225] (tr);
\draw [->] (tr) to [out=45,in=90] (trr) to [bend left=10] (brr) to [out=270,in=315] (br) to [out=135,in=270] (tl) -- (ttl);
\end{tikzpicture}
\]

\begin{definition}
The \emph{Karoubi envelope} $\operatorname{Kar}(\mathscr{C})$ of a 2-category $\mathscr{C}$ is the 2-category with the same objects as $\mathscr{C}$ and whose Hom-categories are the Karoubi envelopes of the Hom-categories of $\mathscr{C}$.
\end{definition}
\begin{definition}
We define $\dot{\mathscr{U}}_Q(\mathfrak{gl}(p))$ to be $\operatorname{Kar}({\mathscr{U}}_Q(\mathfrak{gl}(p)))$.
\end{definition}

We take the split Grothendieck group $K_0(\mathscr{C})$ of a 2-category $\mathscr{C}$ to be the direct sum of the split Grothendieck groups of each Hom-category. If a category $\mathscr{D}$ is graded, then we can give $K_0(\mathscr{D})$ the structure of a $\Z[q,q^{-1}]$-module by letting $q$ act as the grading shift functor.

\begin{thm}[Khovanov-Lauda \cite{Khovanov2010a}] The 2-category $\dot{\mathscr{U}}_Q(\mathfrak{gl}(p))$ satisfies
\[ K_0(\dot{\mathscr{U}}_Q(\mathfrak{gl}(p)))\otimes_{\Z[q,q^{-1}]} \C(q)\cong \dot{U}_q(\mathfrak{gl}(p)). \]
\end{thm}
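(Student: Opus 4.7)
The plan is to reduce to Khovanov and Lauda's theorem for $\mathfrak{sl}(p)$ by exhibiting both sides as coproducts of their $\mathfrak{sl}(p)$ counterparts indexed by total weight $\sum_j \lambda_j$. First I would observe that in every defining relation of $\mathscr{U}_Q(\mathfrak{gl}(p))$ listed above, the weight label $\lambda$ enters only through the differences $\bar{\lambda}_i = \lambda_i - \lambda_{i+1}$ (visible in the dot counts on bubbles, in the degrees of the cup and cap 2-morphisms, and in the indexing of the extended $\mathfrak{sl}_2$ relations). Since the 1-morphism generators $E_i 1_\lambda, F_i 1_\lambda$ shift $\lambda$ by $\alpha_i = e_i - e_{i+1}$ and therefore preserve $\sum_j \lambda_j$, the 2-category decomposes as a coproduct
\[ \mathscr{U}_Q(\mathfrak{gl}(p)) = \coprod_{n \in \Z} \mathscr{U}_Q(\mathfrak{gl}(p))_n, \]
where $\mathscr{U}_Q(\mathfrak{gl}(p))_n$ is the full sub-2-category on objects $1_\lambda$ with $\sum_j \lambda_j = n$. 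The assignment $1_\lambda \mapsto 1_{\bar{\lambda}}$ (with $\bar{\lambda} = (\bar{\lambda}_1,\ldots,\bar{\lambda}_{p-1})$), together with the identity on generating 1- and 2-morphisms, then yields a 2-isomorphism $\mathscr{U}_Q(\mathfrak{gl}(p))_n \cong \mathscr{U}_Q(\mathfrak{sl}(p))$ for each $n$.

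Next I would verify the parallel decomposition at the decategorified level, namely $\dot U_q(\mathfrak{gl}(p)) \cong \bigoplus_{n \in \Z} \dot U_q(\mathfrak{sl}(p))$, where the bijection of idempotents in each summand is again $1_\lambda \leftrightarrow 1_{\bar{\lambda}}$. This is immediate from the defining relations in Section \ref{sec:ladders}: the map $\lambda \mapsto \bar{\lambda}$ restricts to a bijection $\{\lambda \in \Z^p : \sum_j \lambda_j = n\} \to \Z^{p-1}$ for each $n$, and the actions of $E_i, F_i$ are identified under this correspondence with the simple root shifts $2e_i - e_{i-1} - e_{i+1}$ in the $\mathfrak{sl}$-weight lattice, matching the standard presentation of $\dot U_q(\mathfrak{sl}(p))$.

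Finally, both passage to the Karoubi envelope and formation of the split Grothendieck group commute with coproducts of 2-categories, so after tensoring with $\C(q)$ over $\Z[q,q^{-1}]$ I would combine the two decompositions:
\[ K_0(\dot{\mathscr{U}}_Q(\mathfrak{gl}(p))) \otimes_{\Z[q,q^{-1}]} \C(q) \cong \bigoplus_{n \in \Z} K_0(\dot{\mathscr{U}}_Q(\mathfrak{sl}(p))) \otimes_{\Z[q,q^{-1}]} \C(q) \cong \bigoplus_{n \in \Z} \dot U_q(\mathfrak{sl}(p)) \cong \dot U_q(\mathfrak{gl}(p)), \]
the middle isomorphism being the Khovanov-Lauda theorem \cite{Khovanov2010a} applied summand by summand. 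The hard part, strictly speaking, has already been done in \cite{Khovanov2010a}; the only thing genuinely requiring care here is confirming that the defining data of $\mathscr{U}_Q(\mathfrak{gl}(p))$ depends on weights only through the $\mathfrak{sl}$-shadow $\bar{\lambda}$, so that the slice-by-slice equivalence above is an honest 2-isomorphism rather than merely a bijection of objects, and that the isomorphism of algebras $\dot U_q(\mathfrak{gl}(p)) \cong \bigoplus_n \dot U_q(\mathfrak{sl}(p))$ is compatible with the comparison maps on the categorified side.
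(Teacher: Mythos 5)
The paper itself gives no proof here -- it simply attributes the statement to Khovanov and Lauda as an ``easy modification'' of their $\mathfrak{sl}(p)$ result -- so your strategy of reducing to the $\mathfrak{sl}(p)$ case by slicing along total weight is a natural one to try and is almost certainly what the authors have in mind. However, as written there is a genuine gap in the key bijection claim.

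You assert that, for each fixed $n$, the map $\lambda \mapsto \bar\lambda = (\lambda_1-\lambda_2,\ldots,\lambda_{p-1}-\lambda_p)$ restricts to a bijection $\{\lambda\in\Z^p : \sum_j\lambda_j=n\}\to\Z^{p-1}$, and hence that $\mathscr{U}_Q(\mathfrak{gl}(p))_n\cong\mathscr{U}_Q(\mathfrak{sl}(p))$ and $\dot U_q(\mathfrak{gl}(p))_n\cong\dot U_q(\mathfrak{sl}(p))$. This is false. The full map $\Z^p\to\Z^{p-1}$ has kernel the diagonal $\Z(1,\ldots,1)$, so its restriction to a hyperplane $\sum\lambda_j=n$ is injective but \emph{not} surjective: solving $\lambda_i = \lambda_p + \sum_{j=i}^{p-1}\bar\lambda_j$ together with $\sum_i\lambda_i=n$ forces $p\lambda_p = n - \sum_{i=1}^{p-1} i\,\bar\lambda_i$, so $\bar\lambda$ lies in the image only when $\sum_i i\,\bar\lambda_i\equiv n\pmod p$. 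The image is therefore exactly one of the $p$ cosets of the $\mathfrak{sl}(p)$ root lattice inside the $\mathfrak{sl}(p)$ weight lattice, i.e.\ one \emph{block} of $\dot U_q(\mathfrak{sl}(p))$. (The $p=2$ case already exhibits this: $\lambda_1+\lambda_2=n$ forces $\bar\lambda=\lambda_1-\lambda_2\equiv n\bmod 2$.) Consequently $\mathscr{U}_Q(\mathfrak{gl}(p))_n$ is 2-equivalent not to all of $\mathscr{U}_Q(\mathfrak{sl}(p))$ but to the single block $\mathscr{U}_Q(\mathfrak{sl}(p))_{[n]}$ indexed by $n\bmod p$, and the ``bijection of idempotents $1_\lambda\leftrightarrow1_{\bar\lambda}$'' is only an identification with a full (but proper) sub-2-category.

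This gap is fixable and does not sink the overall approach: both $\dot U_q(\mathfrak{sl}(p))$ and $\dot{\mathscr{U}}_Q(\mathfrak{sl}(p))$ decompose into blocks indexed by $P/Q\cong\Z/p$, Khovanov and Lauda's isomorphism $K_0(\dot{\mathscr{U}}_Q(\mathfrak{sl}(p)))\otimes\C(q)\cong\dot U_q(\mathfrak{sl}(p))$ respects that block decomposition (it sends $1_\lambda$ to $1_\lambda$), and therefore restricts to each block. Applying the theorem block-by-block and summing over $n\in\Z$, where the $n$-th slice contributes the block $[n]\in\Z/p$, recovers the $\mathfrak{gl}(p)$ statement. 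You should replace ``$\mathscr{U}_Q(\mathfrak{gl}(p))_n\cong\mathscr{U}_Q(\mathfrak{sl}(p))$'' with ``$\mathscr{U}_Q(\mathfrak{gl}(p))_n\cong\mathscr{U}_Q(\mathfrak{sl}(p))_{[n]}$'' throughout, and similarly at the decategorified level, rather than claiming the slices exhaust all of $\Z^{p-1}$.
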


Without tensoring with $\C(q)$, the algebra $K_0(\dot{\mathscr{U}}_Q(\mathfrak{gl}(p)))$ is isomorphic to the integral form of $\dot{U}_q(\mathfrak{gl}(p))$, generated over $\Z[q,q^{-1}]$ by $E_i^{(r)}$ and $F_i^{(r)}$ for all $i$.

We can categorify $\dot U^\infty_q(\mathfrak{gl}(p))$ in a straight-forward way.

\begin{definition}
We define $\dot{\mathscr{U}}^\infty_Q(\mathfrak{gl}(p))$ to be the quotient of $\dot{\mathscr{U}}_Q(\mathfrak{gl}(p)))$ by the identity 2-morphisms of the identity 1-morphisms of the objects $1_\lambda$ with $\lambda_i<0$ for some $i$.
\end{definition}

In terms of the planar diagrams above, we can interpret this as meaning a 2-morphism is $0$ if it contains a region coloured with a weight $\lambda$ with $\lambda_i<0$ for some $i$. Note that this is a direct sum of the categorified $q$-Schur algebras defined by Mackaay, Sto\v{s}i\`{c} and Vaz \cite{Mackaay2010}.

We can also categorify $\dot U^\infty_q(\mathfrak{gl}(\infty))$.

\begin{lem}
The inclusion 
\[ \iota_0:\dot U^\infty_q(\mathfrak{gl}(p)) \to \dot U^\infty_q(\mathfrak{gl}(p+1)) \]
from Section \ref{sec:branchingrules} lifts to a 2-functor
\[ \dot{\mathscr{U}}^\infty_Q(\mathfrak{gl}(p)) \to \dot{\mathscr{U}}^\infty_Q(\mathfrak{gl}(p+1)).\]
\end{lem}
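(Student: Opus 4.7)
The plan is to construct the 2-functor by hand on generators and verify that all defining relations in the source category are sent to valid relations in the target. Concretely, define
\[ \mathcal{I}_0(1_\lambda) = 1_{(\lambda_1,\ldots,\lambda_p,0)}, \quad \mathcal{I}_0(E_i 1_\lambda)=E_i 1_{(\lambda,0)}, \quad \mathcal{I}_0(F_i 1_\lambda)=F_i 1_{(\lambda,0)} \]
for $1\leq i\leq p-1$, and on each generating 2-morphism (dot, KLR crossing, cup, cap) send the diagram to the diagram of the same shape and the same colour $i\in\{1,\ldots,p-1\}$, with the ambient weight $\lambda$ relabelled as $(\lambda,0)$. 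The assignment on generic 2-morphisms is then the unique extension by horizontal and vertical composition.

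The only content is then to verify that every relation family of $\mathscr{U}_Q(\mathfrak{gl}(p))$ is respected: biadjointness and cyclicity, the KLR nilHecke/quiver relations on upward strands, the mixed $EF$--$FE$ swaps for $i\ne j$, the bubble relations and the infinite Grassmannian identity, and the three cases of the extended $\mathfrak{sl}_2$ relations according to the sign of $\bar\lambda_i$. The key observation making each check trivial is that every one of these relations involves only indices $i\in\{1,\ldots,p-1\}$ and depends on the ambient weight only through the quantities $\bar\lambda_i=\lambda_i-\lambda_{i+1}$ for $i\leq p-1$. Since $(\lambda,0)$ shares its first $p$ coordinates with $\lambda$, these values are identical in source and target, so each relation maps to a formally identical relation in $\mathscr{U}_Q(\mathfrak{gl}(p+1))$. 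In particular the generators $E_p,F_p$ and the coordinate $\lambda_{p+1}=0$ never appear in the image, so no new interactions arise.

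To see that this descends to the quotients defining $\dot{\mathscr{U}}^\infty_Q$, observe that in each category one quotients by identity 2-morphisms of identity 1-morphisms on weights containing a negative entry. Appending a $0$ cannot create a negative coordinate, so $\lambda$ has a negative entry if and only if $(\lambda,0)$ does; hence the quotient relations in the source are killed in the target as well. Finally, the extension to the Karoubi envelope is automatic by functoriality of $\operatorname{Kar}(-)$, and by construction $\mathcal{I}_0$ decategorifies on $K_0$ to $\iota_0$, since it sends the classes of $E_i 1_\lambda$ and $F_i 1_\lambda$ to the classes of $E_i 1_{(\lambda,0)}$ and $F_i 1_{(\lambda,0)}$.

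The main obstacle is purely bureaucratic verification; there is no new algebraic content beyond the observation that appending a trailing $0$ commutes with all the data appearing in the relations. The most notationally involved check is the extended $\mathfrak{sl}_2$ family, since those relations depend explicitly on $\bar\lambda_i$, but these quantities are unchanged for all relevant $i$.
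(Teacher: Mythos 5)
Your proposal is correct and follows essentially the same route as the paper: define the 2-functor by appending a trailing $0$ to weights and relabelling regions, then observe that every relation depends on $\lambda$ only through the differences $\lambda_i-\lambda_{i+1}$ for $i\le p-1$, which are unchanged. The additional checks you spell out (the quotient by negative-entry weights is respected, and passage to the Karoubi envelope is functorial) are implicit in the paper's argument and are handled correctly here.
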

\begin{proof}
The 2-functor carries $1_{\lambda}$ to $1_{(\lambda,0)}$ and $E_i1_{\lambda}$ to $E_i1_{(\lambda,0)}$ and similarly for $F_i1_\lambda$. The 2-functor acts on 2-morphisms only by changing the colouring of a region from $\lambda$ to $(\lambda,0)$. Since all relations involving an $i$ coloured strand depend only on the value of $\lambda_i-\lambda_{i+1}$, relations on 2-morphisms in $\dot{\mathscr{U}}^\infty_Q(\mathfrak{gl}(p))$ are mapped to relations on 2-morphisms in $\dot{\mathscr{U}}^\infty_Q(\mathfrak{gl}(p+1))$, so the 2-functor is well-defined.
\end{proof}

Indecomposable objects in $\dot{\mathscr{U}}_Q(\mathfrak{gl}(p))$ are carried to indecomposable objects in $\dot{\mathscr{U}}_Q(\mathfrak{gl}(p+1))$. Due to Webster \cite{Webster2012}, the indecomposable objects correspond to the elements of Lusztig's canonical basis.

\begin{definition}
We let $\dot{\mathscr{U}}^\infty_Q(\mathfrak{gl}(\infty))$ be the direct limit of the system of 2-categories
\[ \dot{\mathscr{U}}^\infty_Q(\mathfrak{gl}(\infty)) = \lim_{\rightarrow} \left(\xymatrix{\cdots \ar[r] & \dot{\mathscr{U}}^\infty_Q(\mathfrak{gl}(p)) \ar[r]& \dot{\mathscr{U}}^\infty_Q(\mathfrak{gl}(p+1)) \ar[r] &\cdots} \right). \]
\end{definition}

By construction, this satisfies
\[ K_0(\dot{\mathscr{U}}^\infty_Q(\mathfrak{gl}(\infty)))\otimes_{\Z[q,q^{-1}]} \C(q) \cong \dot U^\infty_q(\mathfrak{gl}(\infty)). \]

Categorification of $\dot U_q^{(m|n)}(\mathfrak{gl}(\infty))$ is more difficult, and relies on a notion of categorification of the modules $V_\infty(\lambda)$. This will be left to Section \ref{sec:catirreducibleweightmodules}.

\subsection{KLR algebras}\label{sec:KLRalgebras}
The definition of $\dot{\mathscr{U}}_Q(\mathfrak{gl}(p))$ made use of relations coming from the KLR algebra. This algebra appears in other related settings, and in particular we will make use of it to categorify highest-weight representations of $U_q(\mathfrak{gl}(p))$.

Let $I=\{1,\ldots,p-1\}$. Given an element $\nu=(\nu_1,\ldots,\nu_n)\in I^n$, we let $s_l(\nu)=(\nu_1,\ldots,\nu_{l+1},\nu_l,\ldots,\nu_n)$ interchanging only the $l$ and $(l+1)$th entries.

As before, let $\kk$ be a commutative ring with unit, and for all $i,j\in I$ let $t_{ij}\in \kk^\times$ such that $t_{ij}=t_{ji}$ when $j\neq i\pm 1$, and $t_{ii}=1$ for all $i$, with $Q=\{t_{ij}\mid i,j\in I\}$.

\begin{definition}
The KLR algebra $R(n)$ of degree $n$ of type $A_{p-1}$ with the choice of scalars $Q$ is defined to be the unital algebra over $\kk$ defined by generators $e(\nu)$ ($\nu\in \{1,\ldots,p-1\}^n$), $x_k$ ($1\leq k\leq n$) and $\tau_l$ ($1\leq l \leq n-1$) with defining relations:

\[e(\nu)e(\nu')=\delta_{\nu,\nu'}e(\nu), \quad \sum_{\nu\in I^n} = 1 \]
\[ x_kx_l=x_lx_k, \quad x_ke(\nu)=e(\nu)x_k \]
\[ \tau_l e(\nu) = e(s_l(\nu))\tau_l, \quad \tau_k\tau_l=\tau_l\tau_k \, \text{if } |k-l|>1 \]
\[ \tau^2_ke(\nu)=\left\lbrace \begin{array}{ll}
0 & \text{if } \nu_k=\nu_{k+1} \\
t_{\nu_k\nu_{k+1}}e(\nu) & \text{if } |\nu_k-\nu_{k+1}|>1 \\
(t_{\nu_k\nu_{k+1}}x_k+t_{\nu_{k+1}\nu_k}x_{k+1})e(\nu) & \text{if } \nu_k=\nu_{k+1}\pm 1
\end{array}\right. \]
\[ (\tau_kx_l-x_{s_k(l)}\tau_k)e(\nu)=\left\lbrace \begin{array}{ll}
-e(\nu) & \text{if } l=k, \nu_k=\nu_{k+1} \\
e(\nu) & \text{if } l=k+1, \nu_k=\nu_{k+1} \\
0 & \text{otherwise}
\end{array}\right. \]
\[(\tau_{k+1}\tau_k\tau_{k+1}-\tau_k\tau_{k+1}\tau_k)e(\nu)=
\left\lbrace \begin{array}{ll}
t_{\nu_k\nu_{k+1}}e(\nu)& \text{if } \nu_k=\nu_{k+2} \text{ and } \nu_{k+1}=\nu_{k}\pm 1\\
0 & \text{otherwise}
\end{array}\right. \]
The algebra $R(n)$ is graded, with $\deg(e(\nu))=0$, $\deg(x_k)=2$, $\deg(\tau_le(\nu))=-\alpha_{\nu_l}\cdot\alpha_{\nu_{l+1}}$.
\end{definition}

In the following, we let $E_{\mathbf{i}}1_\lambda=E_{i_n}\cdots E_{i_1}1_\lambda$ for any $\mathbf{i}\in I^n$.
\begin{lem}\label{lem:KLRbimodule} In the $2$-category ${\mathscr{U}}_Q(\mathfrak{gl}(p))$, the space
\[\bigoplus_{\mathbf{j}\in I^n}\operatorname{Hom}(E_{\mathbf{i}}1_\lambda,E_{\mathbf{j}}1_\lambda)\]
is a right module over $R(n)$ for each $\mathbf{i}\in I^n$.
\end{lem}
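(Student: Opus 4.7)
The plan is to define the right action diagrammatically and then check that each defining relation of $R(n)$ corresponds to one of the relations in the ``Action of KLR algebra on $E_i 1_\lambda$'' subsection of the definition of $\mathscr{U}_Q(\mathfrak{gl}(p))$. Concretely, a 2-morphism $f \in \operatorname{Hom}(E_{\mathbf{i}}1_\lambda, E_{\mathbf{j}}1_\lambda)$ is drawn with $n$ upward-oriented strands labelled by $\mathbf{i}$ at the bottom and by $\mathbf{j}$ at the top, sitting in a region labelled $\lambda$. I would let the generators of $R(n)$ act on the bottom of such a diagram: the idempotent $e(\nu)$ acts as the identity when $\nu = \mathbf{i}$ and as $0$ otherwise; $x_k$ acts by adding a dot on the $k$-th strand at the bottom; and $\tau_l$ acts by precomposing with the KLR crossing 2-morphism between the $l$-th and $(l+1)$-th strands at the bottom.

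First I would check that this assignment is well-defined at the level of objects: each generator preserves the source $E_{\mathbf{i}}1_\lambda$ (in the case of $\tau_l$, it sends the summand indexed by $\mathbf{j}$ into the summand indexed by $s_l(\mathbf{j})$ when viewed from the source side, which is exactly what the right module structure records after reindexing by $\mathbf{i}$). The grading of each generator in $R(n)$ matches the intrinsic grading of the corresponding 2-morphism, so the action is graded.

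Next I would verify the KLR relations one by one. The relations $e(\nu)e(\nu') = \delta_{\nu\nu'} e(\nu)$ and $\sum_\nu e(\nu) = 1$ are immediate; the commutation $x_k x_l = x_l x_k$ follows from the planar isotopy of dots on distinct strands, and $x_k$ commutes with $e(\nu)$ trivially. The relation $\tau_l e(\nu) = e(s_l(\nu))\tau_l$ is the source/target rule for a crossing. The quadratic relation for $\tau_k^2 e(\nu)$ is precisely the relation on $E_\mathbf{i}1_\lambda$ for a double crossing (vanishing when $\nu_k=\nu_{k+1}$, scalar multiple of the identity when $|\nu_k-\nu_{k+1}|>1$, and the dot-sum version when $\nu_k = \nu_{k+1}\pm 1$); the nilHecke-style relation $(\tau_k x_l - x_{s_k(l)}\tau_k)e(\nu)$ is the dot-slide relation in the subsection on the action of the KLR algebra; and the braid-like relation for $\tau_{k+1}\tau_k\tau_{k+1} - \tau_k\tau_{k+1}\tau_k$ is the triple-crossing relation listed there, including the non-zero correction term exactly when $\nu_k = \nu_{k+2}$ and $\nu_{k+1} = \nu_k \pm 1$.

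The main potential obstacle is bookkeeping: one must be careful that acting ``on the bottom'' gives a right action (so the multiplication in $R(n)$ is read bottom-up), and that the scalars $t_{ij}$ appearing in the definition of $R(n)$ match those appearing in $\mathscr{U}_Q(\mathfrak{gl}(p))$ under this identification. Once the matching of generators and scalars is fixed, every relation in $R(n)$ appears verbatim as one of the defining local 2-morphism relations of $\mathscr{U}_Q(\mathfrak{gl}(p))$, so the verification reduces to a direct comparison and no further computation is required.
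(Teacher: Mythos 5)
Your overall strategy (define the action diagrammatically and match each defining relation of $R(n)$ against the relations listed under ``Action of KLR algebra on $E_i1_\lambda$'') is the right idea, but you have attached the action to the wrong end of the diagrams, and as written it does not give an $R(n)$-action on the stated space. In $\bigoplus_{\mathbf{j}\in I^n}\operatorname{Hom}(E_{\mathbf{i}}1_\lambda,E_{\mathbf{j}}1_\lambda)$ the source sequence $\mathbf{i}$ (the bottom of the diagrams) is \emph{fixed}, while the target sequence $\mathbf{j}$ (the top) is what runs over the direct sum. Precomposing at the bottom with the crossing between strands $l$ and $l+1$ produces a 2-morphism whose source is $E_{s_l(\mathbf{i})}1_\lambda$, which lies outside the module whenever $i_l\neq i_{l+1}$; and your rule that $e(\nu)$ act by $\delta_{\nu,\mathbf{i}}$ is incompatible with $\tau_l e(\nu)=e(s_l(\nu))\tau_l$ unless all such crossings act by zero: from that relation, $(f\cdot\tau_l)e(\nu)=\delta_{\nu,s_l(\mathbf{i})}\,f\cdot\tau_l$, so $f\cdot\tau_l$ is supported on $e(s_l(\mathbf{i}))$, while your idempotent rule forces every element of the module to be supported on $e(\mathbf{i})$; hence $f\cdot\tau_l=0$ when $s_l(\mathbf{i})\neq\mathbf{i}$. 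Your parenthetical claim that $\tau_l$ ``sends the summand indexed by $\mathbf{j}$ into the summand indexed by $s_l(\mathbf{j})$ when viewed from the source side'' does not make sense: an operation at the bottom never changes the top label $\mathbf{j}$, and $\mathbf{i}$ is not a summand index. (A bottom action of all of $R(n)$ would be natural on the different space $\bigoplus_{\mathbf{j}}\operatorname{Hom}(E_{\mathbf{j}}1_\lambda,E_{\mathbf{i}}1_\lambda)$, where the bottom varies.)

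The paper's proof places the action at the end where the label varies: $e(\nu)$ acts as the projection onto the summand $\operatorname{Hom}(E_{\mathbf{i}}1_\lambda,E_{\nu}1_\lambda)$, $x_k$ adds a dot at the \emph{top} of the $k$-th strand, and $\tau_l$ adds a crossing at the top of the $l$-th and $(l+1)$-th strands, carrying the $\mathbf{j}$-summand to the $s_l(\mathbf{j})$-summand; the direct sum over all $\mathbf{j}$ is exactly what keeps this action inside the space. That this top action is a \emph{right} action is a bookkeeping convention (the product in $R(n)$ is read in the order opposite to vertical composition of 2-morphisms); the substantive point is that the generators must act on the varying end. Once this is corrected, the remainder of your argument --- the degree matching, the matching of the scalars $t_{ij}$, and the relation-by-relation comparison with the listed 2-morphism relations in ${\mathscr{U}}_Q(\mathfrak{gl}(p))$ --- goes through essentially as you describe, and is what the paper's one-line proof implicitly relies on.
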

\begin{proof}
The right-module structure is given by $e(\mathbf{i})$ acting as projection to \[ \operatorname{Hom}(E_{\mathbf{i}}1_\lambda,E_{\mathbf{j}}1_\lambda),\] $x_k$ acting by adding a dot to the top of the $k$th strand, and $\tau_k$ adding a $4$-valent crossing between the top of the $k$ and $(k+1)$st strands.
\end{proof}

In fact, if we denote by $\Pi_\lambda$ the graded commutative $\kk$-algebra freely generated by all bubbles
\[  \begin{tikzpicture}[node distance=0.5cm,baseline=-0.65ex]
\node (m) {};
\draw[decoration={markings,mark=at position 0.5 with {\arrow{>}}},postaction={decorate}] (m) circle [radius=0.5cm];
\node [below right of=m,label={[label distance=-0.3cm]below right:$-\lambda_i+\lambda_{i+1}-1+\alpha$}] (dot) {$\bullet$};
\node [left of=m,label=left:$i$] (colour) {};
\node [above right of=m, label={[label distance=0.1cm]above right:$\lambda$}] (weight) {};
\end{tikzpicture} \]
if $\lambda_i-\lambda_{i+1}<0$, or else by
\[ \begin{tikzpicture}[node distance=0.5cm,baseline=-0.65ex]
\node (m) {};
\draw[decoration={markings,mark=at position 0.5 with {\arrow{<}}},postaction={decorate}] (m) circle [radius=0.5cm];
\node [below right of=m,label={[label distance=-0.3cm]below right:$\lambda_i-\lambda_{i+1}-1+\alpha$}] (dot) {$\bullet$};
\node [left of=m,label=left:$i$] (colour) {};
\node [above right of=m, label={[label distance=0.1cm]above right:$\lambda$}] (weight) {};
\end{tikzpicture}\]
if $\lambda_i-\lambda_{i+1}\geq 0$, then we have the following stronger statement. We denote by $\operatorname{HOM}(x,y)$ the space of morphisms $x\to q^k y$ for any $k\in \Z$. That is, $\operatorname{HOM}(x,y)= \bigoplus_k \operatorname{Hom}(x,q^ky)$.

\begin{thm}[Khovanov-Lauda \cite{Khovanov2010a}]
The map
\[ e(\mathbf{j})R(n)e(\mathbf{i})\otimes_{\kk} \Pi_\lambda \to \operatorname{HOM}(E_{\mathbf{i}}1_\lambda,E_{\mathbf{j}}1_\lambda) \]
is an isomorphism.
\end{thm}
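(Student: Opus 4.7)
The plan is to establish surjectivity and injectivity separately, using a normal form argument for the first and a faithful $2$-representation (or a dimension count via $K_0$) for the second.

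For surjectivity, I would show that every $2$-morphism from $E_{\mathbf{i}}1_\lambda$ to $E_{\mathbf{j}}1_\lambda$ can be rewritten as a $\kk$-linear combination of products $\alpha\cdot\beta$, where $\alpha$ is a diagram built only from dots and upward crossings on $n$ strands (matching the KLR generators $e(\nu),x_k,\tau_l$) and $\beta$ is a product of bubbles placed in a designated region to the right of the identity strands. Because the source and target each consist of $n$ upward strands, every cup, cap or curl must live in the interior of a diagram. Using biadjointness and cyclicity to slide caps and cups past crossings, and then the extended $\mathfrak{sl}_2$ relations (together with the $i\neq j$ mixed relations) to resolve each curl or cup--cap pair, one replaces every such interior feature by a sum of pure KLR diagrams multiplied by bubbles. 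The bubble--slide identities, together with the infinite Grassmannian relation that expresses negatively dotted bubbles in terms of positively dotted ones, allow all resulting bubbles to be pushed outward and collected in the designated region. This gives the desired factorisation and hence surjectivity.

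For injectivity, the cleanest route is to exhibit a faithful $2$-representation on which the normal forms remain linearly independent. I would use the action of $\mathscr{U}_Q(\mathfrak{gl}(p))$ on the $2$-category of bimodules over polynomial rings (equivalently, equivariant cohomology of partial flag varieties), where $E_i$ and $F_i$ act by the usual induction and restriction bimodules. Under this action the KLR generators are sent to the standard Demazure--type operators on the tensor product of polynomial rings, and the bubbles are identified with the tautological Chern classes generating the polynomial factor. The resulting endomorphism space is computed, by a standard Schubert calculus argument, to be precisely $e(\mathbf{j})R(n)e(\mathbf{i})\otimes_\kk \Pi_\lambda$, so the normal forms are independent.

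The main obstacle is pinning down injectivity: the relations alone give only an upper bound on $\operatorname{HOM}(E_{\mathbf{i}}1_\lambda,E_{\mathbf{j}}1_\lambda)$. I expect the heaviest work to be in verifying that the polynomial/flag-variety $2$-representation genuinely detects all normal forms; an attractive alternative, if the above categorification theorem $K_0(\dot{\mathscr{U}}_Q(\mathfrak{gl}(p)))\otimes \C(q)\cong \dot U_q(\mathfrak{gl}(p))$ is taken as input, is to argue by graded dimensions: the graded rank of $e(\mathbf{j})R(n)e(\mathbf{i})\otimes \Pi_\lambda$ matches the value of the Khovanov--Lauda semilinear form on $E_{\mathbf{i}}1_\lambda$ and $E_{\mathbf{j}}1_\lambda$ in $\dot U_q(\mathfrak{gl}(p))$, which forces the surjection to be an isomorphism.
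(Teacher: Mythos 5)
The paper does not prove this statement; it is imported verbatim from Khovanov and Lauda \cite{Khovanov2010a}. Your proposal is, in outline, the proof that actually appears there: a spanning argument via diagrammatic normal forms for surjectivity, and nondegeneracy via an explicit $2$-representation for injectivity. The surjectivity step is described correctly — cyclicity and biadjointness let you slide caps and cups, the extended $\mathfrak{sl}_2$ and mixed relations eliminate interior curls and cup--cap pairs, and bubble slides together with the infinite Grassmannian relation collect all bubbles into one region — and the flag-variety $2$-representation is indeed the right tool for nondegeneracy. One caveat worth making explicit: the $2$-representation on the cohomology of a fixed flag variety $Fl(N)$ is \emph{not} faithful, since the fake bubbles eventually satisfy extra relations in a finite cohomology ring; Khovanov--Lauda handle this by letting $N$ grow and showing the potential defect is supported in degrees that escape to infinity. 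Your mention of ``equivariant cohomology'' (which does give honest polynomial rings) is one way to sidestep this, but the bookkeeping for the bubbles still needs to be spelled out; as written this is the genuinely hard step and the proposal understates it.

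The ``attractive alternative'' for injectivity is circular and should be dropped. The isomorphism $K_0(\dot{\mathscr{U}}_Q(\mathfrak{gl}(p)))\otimes\C(q)\cong \dot U_q(\mathfrak{gl}(p))$ is itself a consequence of the nondegeneracy you are trying to prove: without it one only obtains a surjection from the integral form of $\dot U_q$ onto $K_0$, so the graded-dimension count cannot be taken as independent input.
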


Thus the KLR algebra $R=\bigoplus_n R(n)$ essentially contains all information about mappings between the $E_\mathbf{i}$'s. Due to the biadjunction between $E_i$ and $F_i$, we could also say that the KLR algebra contains all information about the mappings between the $F_\mathbf{i}$'s.

Let $U^{-}_q(\mathfrak{gl}(p))$ be the subalgebra of $U_q(\mathfrak{gl}(p))$ generated by the $F_i$.

\begin{thm}
There is an isomorphism \[U^{-}_q(\mathfrak{gl}(p))\cong K_0(\operatorname{p-mod}R)\otimes_{\Z[q,q^{-1}]}\C(q) \]
where $\operatorname{p-mod}R$ is the category of finitely generated graded projective left $R$-modules.
\end{thm}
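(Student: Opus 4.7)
The plan is to exhibit the isomorphism as the unique $\Z[q,q^{-1}]$-algebra map $\gamma: U^-_q(\mathfrak{gl}(p))\to K_0(\operatorname{p-mod}R)\otimes_{\Z[q,q^{-1}]}\C(q)$ sending $F_i\mapsto [P(i)]$, where $P(i)=R(1)e(i)$. The first task is to give the target an algebra structure. The non-unital inclusion $R(n)\otimes R(m)\hookrightarrow R(n+m)$, corresponding to juxtaposing $n$ strands to the left of $m$ strands, defines an exact induction functor
\[ \operatorname{Ind}^{n+m}_{n,m}(M,N) = R(n+m)\otimes_{R(n)\otimes R(m)}(M\boxtimes N) \]
that preserves projectives. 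Summing over $n$, the induced product on the Grothendieck group yields an associative graded $\Z[q,q^{-1}]$-algebra with unit $[R(0)]$, and the grading shift $q$ acts the expected way.

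Next I would check that $F_i\mapsto [P(i)]$ respects the relations of $U^-_q(\mathfrak{gl}(p))$. Commutativity $[P(i)][P(j)]=[P(j)][P(i)]$ for $|i-j|>1$ is immediate from the crossing relation $\tau_k^2 e(\nu)=t_{\nu_k\nu_{k+1}}e(\nu)$, which provides an explicit invertible intertwiner between the two induction products. The $q$-Serre relation requires decomposing the projectives $P(\nu)$ for $\nu\in\{(i,i,i\pm 1),(i,i\pm 1,i),(i\pm 1,i,i)\}$ into indecomposable summands via explicit idempotents in $e(\nu)R(3)e(\nu)$ built from dots and crossings. Combining those decompositions gives
\[ [P(i)]^{2}[P(i\pm 1)]-[2][P(i)][P(i\pm 1)][P(i)]+[P(i\pm 1)][P(i)]^{2}=0 \]
in $K_0$, which is the computational core of the argument.

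To show $\gamma$ is injective I would use Lusztig's symmetric bilinear form on $U^-_q(\mathfrak{gl}(p))$, which is non-degenerate on each weight space. On the Grothendieck-group side, the rule $\langle[P],[Q]\rangle=\operatorname{gdim}\operatorname{HOM}_R(P,Q)$ defines a natural sesquilinear pairing valued in $\Z(\!(q)\!)$, and a direct calculation on the generators shows that $\gamma$ intertwines the two pairings up to a standard normalisation. Non-degeneracy of Lusztig's form then forces $\gamma$ to be injective on each weight space after tensoring with $\C(q)$.

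Surjectivity is the easy direction: every finitely generated graded projective $R(n)$-module is a direct summand (up to grading shift) of some $R(n)e(\nu)\cong P(\nu_n)\circ\cdots\circ P(\nu_1)$, so the image of $\gamma$ already spans $K_0(\operatorname{p-mod}R)\otimes\C(q)$ over $\C(q)$. I expect the main obstacle to be the $q$-Serre verification: writing down the idempotents that realise the decomposition and tracking the $q$-shifts imposed by the degrees of $x_k$, $\tau_l$ and $e(\nu)$ is a genuine computation, while the pairing and spanning arguments are comparatively formal once the algebra structure is in place.
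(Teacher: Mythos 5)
The paper does not prove this theorem at all: it is recalled as a known result of Khovanov and Lauda (the foundational theorem of their categorification of $U_q^-$ via KLR algebras), so there is no internal proof to compare against. That said, your sketch is a faithful outline of the Khovanov--Lauda argument and contains all the right ingredients: the algebra structure on $K_0$ via induction along the non-unital inclusions $R(n)\otimes R(m)\hookrightarrow R(n+m)$; verification of the far-commutativity relation via the invertibility of $\tau_1 e(ij)$ when $|i-j|>1$; the $q$-Serre relation realised as a direct sum decomposition of the projectives $R(3)e(\nu)$ for $\nu$ with two entries $i$ and one $i\pm 1$; surjectivity over $\C(q)$ from the trivial observation that every graded projective is a summand of a sum of shifted $R(n)e(\nu)\cong P(\nu_n)\circ\cdots\circ P(\nu_1)$; and injectivity by intertwining the graded Hom pairing (valued in $\Z(\!(q)\!)$) with Lusztig's non-degenerate form, which kills anything in the kernel. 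Two points worth tightening if this were to be written out: the pairing on $K_0$ should be phrased so that it is manifestly symmetric and bar-equivariant (e.g.\ using a duality on projectives, $\langle [P],[Q]\rangle = \operatorname{gdim}(P^\psi\otimes_R Q)$, rather than raw $\operatorname{HOM}$), otherwise the claimed match with Lusztig's form is off by more than a scalar; and the $q$-Serre step should record that the needed decomposition is $P(i,i,i\pm 1)\oplus P(i\pm 1,i,i)\cong q P(i,i\pm 1,i)\oplus q^{-1}P(i,i\pm 1,i)$, since the $[2]$ in the relation is really a direct sum of grading shifts on the categorified side. Neither is a gap in the strategy, only in the bookkeeping. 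Note also that because the theorem here is only after tensoring with $\C(q)$, you are spared the harder integral surjectivity statement (that divided powers hit indecomposables), which in Khovanov--Lauda requires more work.
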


\subsection{Alternative definition}
It will be useful to have the following definition of $\mathscr{U}_Q(\mathfrak{gl}(p))$ due to Rouquier \cite{Rouquier2008}.

\begin{definition}
The strict additive $\kk$-linear 2-category ${\mathscr{U}}_q(\mathfrak{gl}(p))$ is defined as follows:
\begin{itemize}
\item Objects: $1_\lambda$ for each $\lambda\in\Z^p$.
\item 1-morphisms: direct sums of concatenations of $q^kE_i1_\lambda:1_\lambda \to 1_{\lambda+\alpha_i}$ and $q^kF_i1_\lambda:1_\lambda\to 1_{\lambda-\alpha_i}$ where $\alpha_i=(0,\ldots,1,-1,\ldots,0)$ and $k\in \Z$.
\item 2-morphisms: generated by $x_i:E_i1_\lambda\to q^2E_i1_\lambda$, $\tau_{ij}:E_iE_j1_\lambda\to q^{-\alpha_i\cdot\alpha_j}E_jE_i1_\lambda$, $\epsilon_i:E_iF_i1_\lambda\to q^{1-\lambda\cdot \alpha_i}1_\lambda$, $\eta_i:1_\lambda\to q^{1+\lambda\cdot \alpha_i}F_iE_i$ subject to:
\begin{enumerate}
\item $\epsilon_i\id_{E_i1_\lambda}\circ \id_{E_i1_\lambda}\eta_i=\id_{E_i1_\lambda}$ and $\id_{F_i1_\lambda}\epsilon_i \circ \eta_i\id_{F_i1_\lambda}=\id_{F_i1_\lambda}$.
\item The 2-morphisms $x_i$ and $\tau_{ij}$ obey the KLR relations.
\item Let $\sigma_{ij}=\id_{F_jE_i1_{\lambda}}\epsilon_j\circ \id_{F_j1_{\lambda+\alpha_i}}\tau_{ji} \id_{F_j1_\lambda}\circ \eta_j\id_{E_iF_j1_\lambda}:E_iF_j1_\lambda\to F_jE_i1_\lambda$. Then the 2-morphisms:
\begin{itemize}
\item $\sigma_{ij}$ for $i\neq j$.
\item If $\lambda_i-\lambda_{i+1}\geq 0$, \[ \rho_i=\sigma_{ii}\oplus \bigoplus_{k=0}^{\lambda_i-\lambda_{i+1}-1} \epsilon_i\circ x^k_i\id_{F_i1_{\lambda\alpha_i}}:E_iF_i1_\lambda \to F_iE_i1_\lambda\oplus\bigoplus_{k=0}^ {\lambda_i-\lambda_{i+1}-1} q^{-\lambda_i+\lambda_{i+1}+2k+1}1_\lambda\]
\item If $\lambda_i-\lambda_{i+1}\leq 0$, \[ \rho_i=\sigma_{ii}\oplus \bigoplus_{k=0}^{-\lambda_i+\lambda_{i+1}-1} \id_{F_i1_{\lambda+\alpha_i}}x^k_i \circ \eta_i:E_iF_i1_\lambda\bigoplus_{k=0} ^{-\lambda_i+\lambda_{i+1}-1}q^ {-\lambda_i+\lambda_{i+1}-2k-1}1_\lambda \to F_iE_i1_\lambda.\]
\end{itemize}
are invertible.
\end{enumerate}
\end{itemize}
\end{definition}

\begin{thm}[Brundan \cite{Brundan2015}]\label{thm:same}
The two definitions of ${\mathscr{U}}_q(\mathfrak{gl}(p))$ agree.
\end{thm}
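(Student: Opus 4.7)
The plan is to construct explicit 2-functors in both directions and verify they are mutually inverse equivalences of 2-categories. Both definitions share the same objects $1_\lambda$ and 1-morphisms (formal direct sums of shifted concatenations of $E_i 1_\lambda$ and $F_i 1_\lambda$), so the content is entirely about matching the spaces of 2-morphisms.

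The easy direction is to build a 2-functor $\Phi:{\mathscr{U}}_q(\mathfrak{gl}(p))^{\text{Rouq}} \to {\mathscr{U}}_Q(\mathfrak{gl}(p))^{\text{KL}}$. On 2-morphisms I would send $x_i$, $\tau_{ij}$, $\epsilon_i$, $\eta_i$ to the obvious dot, upward crossing, rightward cap, and rightward cup in the Khovanov--Lauda calculus. The KLR relations and the first adjunction relations are imposed on both sides, so only the invertibility of $\sigma_{ij}$ (for $i\neq j$) and of $\rho_i$ needs to be checked. The mixed $EF$ relations of the Khovanov--Lauda calculus provide an explicit inverse to $\sigma_{ij}$ built from the opposite sideways crossing. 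For $\rho_i$, the extended $\mathfrak{sl}_2$ relations (in each of the cases $\bar\lambda_i > 0$, $\bar\lambda_i < 0$, $\bar\lambda_i = 0$) give precisely a block decomposition of $E_i F_i 1_\lambda$ as $F_i E_i 1_\lambda$ plus a sum of shifted copies of $1_\lambda$, and an inverse to $\rho_i$ can be written directly in terms of leftward cups, caps, and fake bubbles.

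The hard direction is a 2-functor $\Psi$ the other way; here one must manufacture the leftward cup and cap (as well as the sideways crossings of mixed type) in the Rouquier presentation using only $\eta_i$, $\epsilon_i$, $x_i$, $\tau_{ij}$, and the invertibility of $\sigma_{ij}$ and $\rho_i$. The leftward cap and cup can be defined as appropriate components of $\rho_i^{-1}$ paired with the rightward $\eta_i$ and $\epsilon_i$. Once these are in place, one must verify all the Khovanov--Lauda relations: the second adjunctions (for the left-biadjointness), cyclicity of dots and crossings, the full set of bubble relations (including the infinite Grassmannian/fake bubble identity), and the extended $\mathfrak{sl}_2$ relations. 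The bubble relations are obtained by unpacking the identity $\rho_i \circ \rho_i^{-1} = \id$ in each graded component, while cyclicity of $\tau$ and $x$ follows from massaging $\sigma_{ij}^{-1}$ and using the KLR relations.

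The main obstacle is the hard direction, and within it, the single most subtle point is establishing the infinite Grassmannian relation
\[ \left(\sum_r \text{(clockwise bubble of degree $r$)}\, t^r\right)\left(\sum_s \text{(counterclockwise bubble of degree $s$)}\, t^s\right) = \id_{1_\lambda} \]
from invertibility of $\rho_i$ alone. Concretely, the issue is that $\rho_i$ only encodes finitely many genuine bubbles, yet the relation needs infinitely many fake bubbles to make sense; one has to define the fake bubbles inductively so that the infinite Grassmannian identity holds by construction, and then check that this definition is consistent with the finite relations forced by $\rho_i^{-1}$. This is the core technical content of \cite{Brundan2015}, and I would follow that strategy. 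Once these bubble relations are in hand, the extended $\mathfrak{sl}_2$ relations follow by expanding $\rho_i^{-1}$ in the given block form. Finally $\Phi$ and $\Psi$ are checked to be mutually inverse on generators, which is automatic from the constructions.
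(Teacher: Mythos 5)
The paper states this theorem by citation to \cite{Brundan2015} and gives no proof of it; it is treated as an external black box. So there is no internal argument to compare against. That said, your outline is a faithful high-level summary of Brundan's actual strategy: the "easy" direction, sending $x_i,\tau_{ij},\eta_i,\epsilon_i$ to the corresponding Khovanov--Lauda generators and reading off invertibility of $\sigma_{ij}$ and $\rho_i$ from the mixed $EF$ and extended $\mathfrak{sl}_2$ relations; the "hard" direction, manufacturing leftward cups/caps and sideways crossings from $\rho_i^{-1}$ and then verifying biadjointness, cyclicity, and all bubble relations; and, crucially, the correct identification of the infinite Grassmannian relation and the inductive definition of fake bubbles as the technical heart of the matter. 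However, your write-up explicitly defers to \cite{Brundan2015} for exactly that heart ("I would follow that strategy"), so this is a roadmap for the proof rather than a proof. That is a reasonable stance given that the paper itself only cites the result, but be aware that the substantive work — defining fake bubbles so that the Grassmannian identity holds, then checking consistency with the finitely many genuine bubble relations forced by $\rho_i^{-1}$, and extracting cyclicity from the KLR relations plus $\sigma_{ij}^{-1}$ — is nontrivial and entirely absorbed into the citation in both your sketch and the paper.
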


Thus the entire structure of the 2-category is determined by a relatively small number of axioms. Essentially one takes the KLR algebra from Section \ref{sec:KLRalgebras} interpreted as 2-morphisms between the $E_i$'s, then formally adjoins a right-adjoint $F_i$ to each $E_i$, and then localises at maps between compositions of $E_i$'s and $F_j$'s. It is a remarkable fact that this makes $F_i$ also into a left-adjoint for $E_i$, so that $E_i$ and $F_i$ are both left and right adjoint to each other.

As before we obtain $\dot{\mathscr{U}}_q(\mathfrak{gl}(p))$ by taking the Karoubi envelope of ${\mathscr{U}}_q(\mathfrak{gl}(p))$.

\subsection{2-representations}
In this section we define an appropriate `higher' analogue of a representation of $\mathfrak{gl}(p)$. Rather than studying the action on a vector space, here we study the action of $\dot{\mathscr{U}}_Q(\mathfrak{gl}(p))$ on a category. Hence we define a mapping of $\dot{\mathscr{U}}_Q(\mathfrak{gl}(p))$ into 2-category, that can be thought of as a category of functors and natural transformations acting on another category. 

There are a few competing definitions of what a 2-representation should be. This is partly due to the definitions being made in the absence of Theorem \ref{thm:same}, so that it was not clear what conditions a 2-representation would need to satisfy.

A definition of a $Q$-strong 2-representation was given by Cautis and Lauda \cite{Cautis2011}, which is based on the Khovanov-Lauda definition of the 2-category but gives a relatively small list of conditions to check, proving that much of the structure is given `for free', as in Rouquier's definition of the 2-category.

Because of Theorem \ref{thm:same} due to Brundan \cite{Brundan2015}, we can define a 2-representation according to Rouquier \cite{Rouquier2008}, which gives a very small list of conditions a 2-representation needs to satisfy.

\begin{definition}[Rouquier \cite{Rouquier2008}]
A 2-representation of $U_q(\mathfrak{gl}(p))$ is a graded additive $\kk$-linear 2-category $\mathscr{C}$, and a strict 2-functor $\mathscr{U}_Q(\mathfrak{gl}(p))\to \mathscr{C}$. This is equivalent to the following:
\begin{itemize}
\item There exists a family of objects $(V_\lambda)_{\lambda\in\Z^p}$ of $\mathscr{C}$.
\item There exist $1$-morphisms $E_iV_\lambda:V_\lambda\to V_{\lambda+\alpha_i}$ and $F_iV_\lambda:V_{\lambda}\to V_{\lambda-\alpha_i}$ in $\mathscr{C}$.
\item For all $\lambda$, there are 2-morphisms $x_i:E_i1_\lambda\to E_i1_\lambda$ and $\tau_{ij}:E_{j}E_i1_\lambda \to E_iE_j1_\lambda$ satisfying the KLR relations.
\item $E_iV_\lambda$ is left-adjoint to $F_iV_\lambda$.
\item The maps $\rho_i$ and $\sigma_{ij}$ for $i\neq j$ map to isomorphisms.
\end{itemize}
\end{definition}

\begin{definition}
A 2-representation of $U_q(\mathfrak{gl}(p))$ is said to be integrable if for each $\lambda\in\Z^p$ and each object $M\in V_\lambda$ there exists $n$ such that for all $i$ we have
\[ E_i^n(M)=F_i^n(M)=0. \]
\end{definition}

The following conditions for the existence of an integrable 2-representation were given by Cautis and Lauda \cite{Cautis2011}.

\begin{thm}\label{thm:integrable2rep}
Let $\kk$ be a field, and $\mathscr{C}$ be a graded additive $\kk$-linear idempotent-complete 2-category. The following data gives rise to an integrable 2-representation $\mathscr{C}$ of $U_q(\mathfrak{gl}(p))$:
\begin{itemize}
\item There exists a family of objects $(V_\lambda)_{\lambda\in\Z^p}$ of $\mathscr{C}$.
\item There exist 1-morphisms $E_iV_\lambda:V_\lambda\to V_{\lambda+\alpha_i}$ and $F_iV_\lambda:V_{\lambda}\to V_{\lambda-\alpha_i}$ in $\mathscr{C}$.
\item $V_{\lambda+r\alpha_i}$ is isomorphic to $0$ for all but finitely many $r\in \Z$.
\item $E_iV_\lambda$ has a left adjoint and a right adjoint, and its right adjoint is $F_iV_{\lambda+\alpha_i}$.
\item $\operatorname{Hom}_\mathscr{C} (\id_{V_\lambda},q^l\id_{V_\lambda})$ is $0$ if $l<0$ and generated by the identity $2$-morphism if $l=0$. All hom-spaces between $1$-morphisms are finite dimensional.
\item In $\mathscr{C}$,
\[ F_iE_iV_\lambda \cong E_iF_iV_\lambda \bigoplus_{k=0}^{-\lambda_i+\lambda_{i+1}-1} q^{-\lambda_i+\lambda_{i+1}+2k+1} V_{\lambda}, \text{if } \lambda_i-\lambda_{i+1}\leq 0 \]
\[ E_iF_iV_\lambda \cong F_iE_iV_\lambda \bigoplus_{k=0}^{\lambda_i-\lambda_{i+1}-1} q^{\lambda_i-\lambda_{i+1}-2k-1}V_\lambda, \text{if } \lambda_i-\lambda_{i+1}\geq 0
\]
where multiplication by $q$ denotes the grading shift functor on $\mathscr{C}$.
\item There are 2-morphisms $x_i:E_iV_\lambda\to E_iV_\lambda$ and $\tau_{ij}:E_{j}E_iV_\lambda \to E_iE_jV_\lambda$ satisfying the KLR relations.
\item If $i\neq j$, then $F_jE_iV_\lambda \cong E_iF_jV_\lambda$.
\end{itemize}
\end{thm}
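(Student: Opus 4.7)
The plan is to verify Rouquier's axioms for a 2-representation in the alternative definition preceding this theorem, since by Theorem \ref{thm:same} any such 2-representation automatically yields one in the Khovanov-Lauda formulation. The hypotheses directly supply the objects $V_\lambda$, the 1-morphisms $E_iV_\lambda, F_iV_\lambda$, the KLR action on compositions of $E_i$'s via $x_i$ and $\tau_{ij}$, and a right adjoint $F_iV_{\lambda+\alpha_i}$ to $E_iV_\lambda$. So the remaining content is twofold: (i) upgrading this right adjunction to a biadjunction, and (ii) showing that the maps $\rho_i$ and $\sigma_{ij}$ (for $i\neq j$) are invertible.

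For (i), the idea is to construct units and counits for a left adjunction by ``rotating'' those of the given right adjunction using the cyclic KLR structure, and then verify the zigzag identities. The hypothesis that $\operatorname{Hom}(\id_{V_\lambda}, q^l\id_{V_\lambda})$ vanishes for $l<0$ and is one-dimensional for $l=0$, together with the finite-dimensionality of Hom-spaces, guarantees that the resulting bubbles satisfy the nondegenerate pairings needed for biadjointness to hold in the cyclic sense.

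The heart of the argument is (ii). For $\sigma_{ij}$ with $i\neq j$: the hypothesis supplies an abstract isomorphism $F_jE_iV_\lambda \cong E_iF_jV_\lambda$, and by Krull--Schmidt (from finite-dimensional Homs) it suffices to check that $\sigma_{ij}$ is a nonzero natural map of the correct degree — this nonvanishing comes from the fact that its expansion through the units and counits produces the KLR crossing $\tau_{ji}$, which is nontrivial for $|i-j|\neq 0$. For $\rho_i$ the given decomposition
\[
E_iF_iV_\lambda \cong F_iE_iV_\lambda \oplus \bigoplus_{k=0}^{\lambda_i-\lambda_{i+1}-1} q^{\lambda_i-\lambda_{i+1}-2k-1}V_\lambda \qquad (\lambda_i\geq\lambda_{i+1})
\]
provides an abstract splitting, and one must identify the components $\sigma_{ii}$ and $\epsilon_i\circ x_i^k\id$ of $\rho_i$ as realising it. The strategy is to compute the matrix entries of $\rho_i$ in $\operatorname{HOM}(E_iF_iV_\lambda,V_\lambda)$ by composing with cups, reducing the question to nondegeneracy of the pairing between ``dotted caps'' and ``dotted cups'', which in turn is forced by the rigidity of $\operatorname{END}(\id_{V_\lambda})$ together with the bubble relations.

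The main obstacle is this last identification: demonstrating that the specific KLR-built components of $\rho_i$ really do realise the abstract decomposition. The technique is to exploit the bubble-slide relations to show that any nonzero morphism from $E_iF_iV_\lambda$ to a graded shift of $V_\lambda$ in the relevant degree must be a scalar multiple of $\epsilon_i\circ x_i^k$ for a unique $k$, so that the composite in the decomposition is forced to be a triangular matrix of isomorphisms. The integrability requirement $E_i^n=F_i^n=0$ on each object then follows immediately from the hypothesis that $V_{\lambda+r\alpha_i}\cong 0$ for all but finitely many $r\in\Z$.
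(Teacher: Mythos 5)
The paper does not prove this theorem itself; it is quoted from Cautis and Lauda \cite{Cautis2011} and used as a black box, so there is no internal proof for your argument to track. Your proposal nonetheless has the right overall shape --- reduce to Rouquier's axioms, with the invertibility of $\rho_i$ and $\sigma_{ij}$ as the real content --- but two of the mechanisms you invoke are suspect.

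First, the description of step (i) as ``rotating the units and counits of the given right adjunction using the cyclic KLR structure'' is circular. The cyclic structure is an invariance of $2$-morphisms under $180^\circ$ rotation, and its very formulation requires cups and caps for \emph{both} adjunctions; the KLR relations on $x_i$ and $\tau_{ij}$ by themselves are relations among upward-oriented strand diagrams and give no rotation. In fact, by Brundan's Theorem \ref{thm:same}, once the Rouquier axioms are verified the second adjunction and the cyclic structure follow for free, so (i) should not be established independently, and it certainly cannot serve as a tool for establishing (ii). Your ordering needs to be: prove $\rho_i,\sigma_{ij}$ invertible first, then obtain the biadjunction as a consequence.

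Second, for $\sigma_{ij}$ the step ``nonvanishing of a degree-$0$ map between abstractly isomorphic objects $\Rightarrow$ isomorphism'' is not valid in general. It holds if $\operatorname{Hom}^0(F_jE_iV_\lambda, E_iF_jV_\lambda)$ is one-dimensional, but that is not a hypothesis and need not follow easily (these $1$-morphisms can decompose). The standard argument instead computes $\sigma_{ji}\sigma_{ij}$ directly: expanding the zigzags, one reduces to the KLR relation $\tau_{ji}\tau_{ij}e(\nu) = t_{ij}e(\nu)$ (or its nearest-neighbour variant plus bubble terms), so that $\sigma_{ji}\sigma_{ij}$ is a unit times the identity, giving invertibility outright. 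Your treatment of $\rho_i$ via nondegenerate pairings of dotted caps and cups is closer to the actual argument, but the crucial identification of the abstract splitting with the KLR-built one is where Cautis and Lauda's work is most delicate and your sketch would need to supply this in detail.
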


Note that it follows that $F_iV_{\lambda+\alpha_i}$ is also left-adjoint to $E_iV_\lambda$.

Since $\mathscr{U}_Q(\mathfrak{gl}(\infty))$ looks `locally' like $\mathscr{U}_Q(\mathfrak{gl}(p))$, we can extend the notion of $2$-representation to $\mathscr{U}_Q(\mathfrak{gl}(\infty))$.

\begin{definition}
A 2-representation of $U_q(\mathfrak{gl}(\infty))$ is a graded additive $\kk$-linear 2-category $\mathscr{C}$, and a strict 2-functor $\mathscr{U}_Q(\mathfrak{gl}(\infty))\to \mathscr{C}$. This is equivalent to the following:
\begin{itemize}
\item There exists a family of objects $(V_\lambda)_{\lambda\in\Z^\infty}$ of $\mathscr{C}$.
\item There exist $1$-morphisms $E_iV_\lambda:V_\lambda\to V_{\lambda+\alpha_i}$ and $F_iV_\lambda:V_{\lambda}\to V_{\lambda-\alpha_i}$ in $\mathscr{C}$.
\item For all $\lambda$, there are 2-morphisms $x_i:E_i1_\lambda\to E_i1_\lambda$ and $\tau_{ij}:E_{j}E_i1_\lambda \to E_iE_j1_\lambda$ satisfying the KLR relations.
\item $E_iV_\lambda$ is left-adjoint to $F_iV_\lambda$.
\item The maps $\rho_i$ and $\sigma_{ij}$ for $i\neq j$ map to isomorphisms.
\end{itemize}
\end{definition}

\begin{definition}
A 2-representation of $U_q(\mathfrak{gl}(\infty))$ is said to be integrable if for each $\lambda\in\Z^\infty$ and each object $M\in V_\lambda$ there exists $n$ such that for all $i$ we have
\[ E_i^n(M)=F_i^n(M)=0. \]
\end{definition}

Since Theorem \ref{thm:integrable2rep} holds for all $p\geq 2$, it is easy to see that it also holds for $p=\infty$. The notion of an integrable 2-representation of $U_q(\mathfrak{gl}(\infty))$ is then a categorification of the notion of an integrable weight-module over $U_q(\mathfrak{gl}(\infty))$, in the sense of Du and Fu \cite{Du2009}.

\section{Foams}\label{sec:foams}
Lemma \ref{lem:ladders} tells us that $\dot U^\infty_q(\mathfrak{gl}(p))$ is equivalent to the category of ladders on $p$ uprights. It follows that $\dot U^\infty_q(\mathfrak{gl}(\infty))$ is equivalent to the category of ladders with any finite number of uprights.

In this section we relate the categorification $\dot{\mathscr{U}}^\infty_Q(\mathfrak{gl}(\infty))$ of $\dot U^\infty_q(\mathfrak{gl}(\infty))$ to a categorification of the category of ladders. As before we let $\kk$ be a commutative unital ring.

\begin{definition}The 2-category $\operatorname{Foam}$ is defined as follows:
\begin{itemize}
\item Objects are sequences $(\lambda_1,\lambda_2,\ldots)$ with $\lambda_i=0$ for all but finitely many $\lambda_i$.
\item 1-morphisms $\lambda \to \mu$ are ladder diagrams with uprights at the bottom coloured by $\lambda$ and uprights at the top coloured by $\mu$.
\item 2-morphisms are $\kk$-linear combinations of labelled, decorated singular surfaces with oriented seams whose generic slices are ladder diagrams, generated by the following:

\begin{figure}[H]
$\id =$ \raisebox{-50pt}{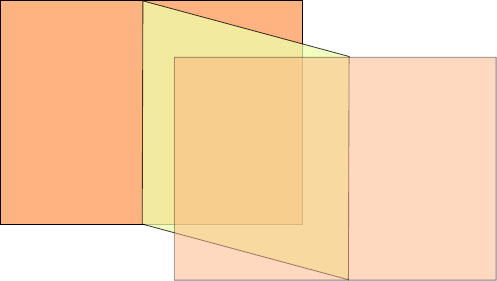} $x_i=$\raisebox{-50pt}{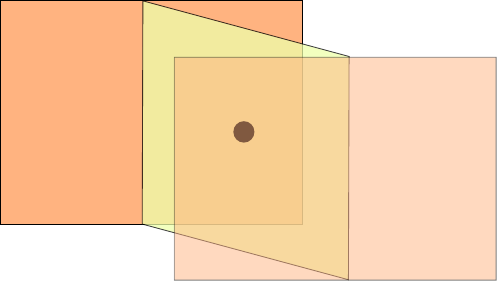}
\end{figure}
\begin{figure}[H]
$\tau_{ii}=$\raisebox{-50pt}{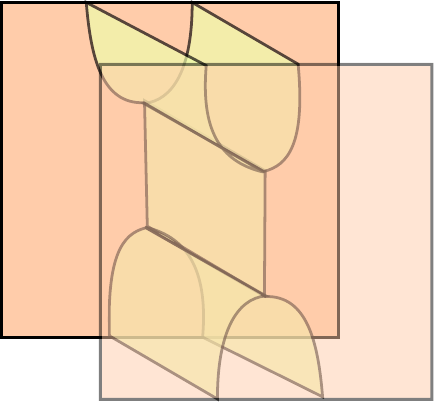}
\end{figure}
\begin{figure}[H]
$\tau_{i,i+1}=$\raisebox{-50pt}{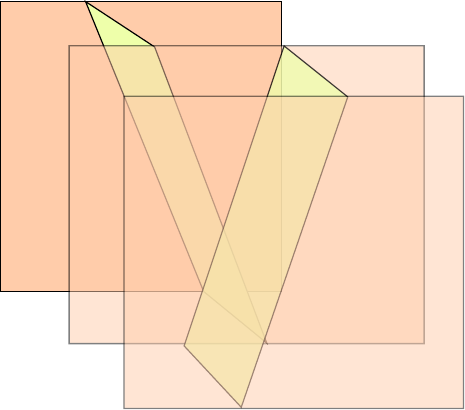} $\tau_{i+1,i}=$\raisebox{-50pt}{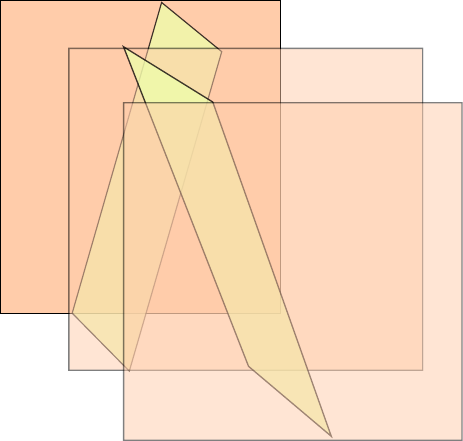}
\end{figure}

\begin{figure}[H]
$\tau_{i,j}=$\raisebox{-50pt}{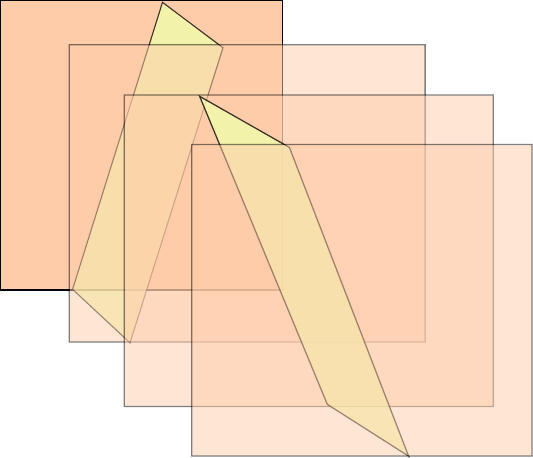}
$\tau_{j,i}=$\raisebox{-50pt}{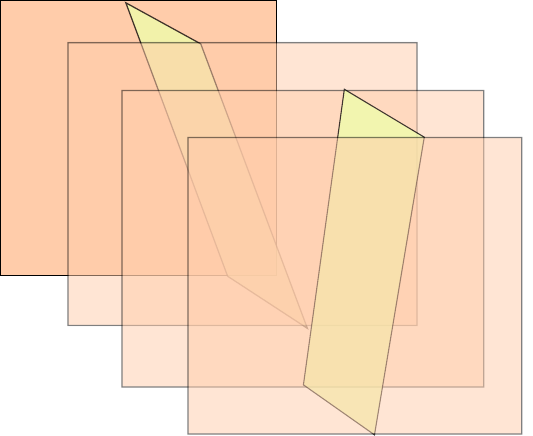}
\end{figure}

\begin{figure}[H]
$\eta_i=$\raisebox{-50pt}{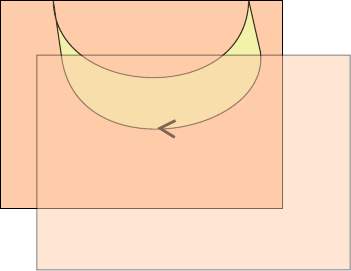} $\epsilon_i=$\raisebox{-50pt}{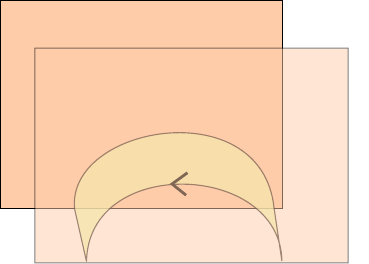}
\end{figure}

\begin{figure}[H]
$\eta'_i=$\raisebox{-50pt}{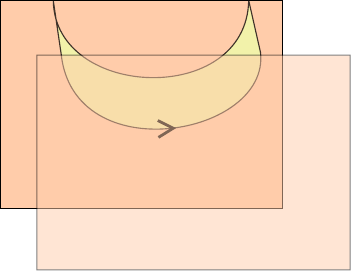}
$\epsilon'_i=$\raisebox{-50pt}{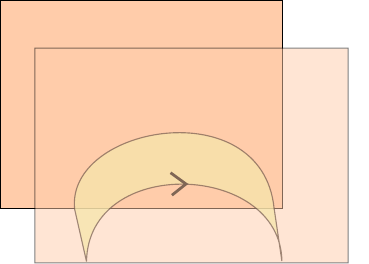}
\end{figure}
such that the 2-functor $\mathscr{U}^\infty_Q(\mathfrak{gl}(\infty))\to \operatorname{Foam}$ defined by
\[E_i1_{\lambda} \mapsto \begin{tikzpicture}[baseline=-0.65ex]
\draw (-0.75,-1) -- (-0.75,1);
\draw (0.75,-1) -- (0.75,1);
\draw (-0.75,0.25) -- (0.75,-0.25);
\draw (-2,-1) -- (-2,1);
\draw (2,-1) -- (2,1);
\draw (0,0.3) node {$1$};
\draw (-1,-1.25) node {$\lambda_i$};
\draw (-1,1.25) node {$\lambda_i+1$};
\draw (-1.4,0) node {$\cdots$};
\draw (1.4,0) node {$\cdots$};
\draw (1,-1.25) node {$\lambda_{i+1}$};
\draw (1,1.25) node {$\lambda_{i+1}-1$};
\draw (-2,-1.25) node {$\lambda_1$};
\draw (2,-1.25) node {$\lambda_m$};
\draw (-2,1.25) node {$\lambda_1$};
\draw (2,1.25) node {$\lambda_m$};
\end{tikzpicture} \]

\[ F_i 1_{\lambda}\mapsto \begin{tikzpicture}[baseline=-0.65ex]
\draw (-0.75,-1) -- (-0.75,1);
\draw (0.75,-1) -- (0.75,1);
\draw (-0.75,-0.25) -- (0.75,0.25);
\draw (-2,-1) -- (-2,1);
\draw (2,-1) -- (2,1);
\draw (0,0.3) node {$1$};
\draw (-1,-1.25) node {$\lambda_i$};
\draw (-1,1.25) node {$\lambda_i-1$};
\draw (-1.4,0) node {$\cdots$};
\draw (1.4,0) node {$\cdots$};
\draw (1,-1.25) node {$\lambda_{i+1}$};
\draw (1,1.25) node {$\lambda_{i+1}+1$};
\draw (-2,-1.25) node {$\lambda_1$};
\draw (2,-1.25) node {$\lambda_m$};
\draw (-2,1.25) node {$\lambda_1$};
\draw (2,1.25) node {$\lambda_m$};
\end{tikzpicture}  \]

\[ \begin{tikzpicture}[baseline=-0.65ex]
\node (middle)[label=left:$\lambda+\alpha_i$,label=right:$\lambda$] {$\bullet$};
\node [below of=middle] (bottom) {};
\node [above of=middle] (top) {$i$};
\draw [->] (bottom) -- (top);
\end{tikzpicture} \mapsto x_i, \quad  \begin{tikzpicture}[baseline=-0.65ex]
\node (middle) {};
\node [above of=middle] (tl) {$i$};
\node [right of=tl] (tr) {$j$};
\node [below of=middle] (bl) {$j$};
\node [right of=bl] (br) {$i$};
\node [right of=middle, label=right:$\lambda$] (r) {};
\draw [->] (bl) to [out=90,in=270] (tr);
\draw [->] (br) to [out=90,in=270] (tl);
\end{tikzpicture} \mapsto \tau_{ij} \]
\[ \begin{tikzpicture}[baseline=-0.5cm]
\node (mid) {$i$};
\node [right of=mid] (r) {$i$};
\draw [->] (mid) to [bend right=90] (r);
\end{tikzpicture} \mapsto \eta_i, \quad \begin{tikzpicture}[baseline=+0.25cm]
\node (mid) {$i$};
\node [right of=mid] (r) {$i$};
\draw [->] (mid) to [bend left=90] (r);
\end{tikzpicture}\mapsto \epsilon_i \]
\[ \begin{tikzpicture}[baseline=-0.5cm]
\node (mid) {$i$};
\node [right of=mid] (r) {$i$};
\draw [<-] (mid) to [bend right=90] (r);
\end{tikzpicture} \mapsto \eta'_i, \quad \begin{tikzpicture}[baseline=+0.25cm]
\node (mid) {$i$};
\node [right of=mid] (r) {$i$};
\draw [<-] (mid) to [bend left=90] (r);
\end{tikzpicture} \mapsto \epsilon'_i \]
is an equivalence of 2-categories.
\end{itemize} 

\end{definition}
\begin{remark}
The definition of the foam 2-category in \cite{Queffelec} and \cite{Lauda2012} is less tautological, and in particular avoids the more rigid setting of ladder diagrams, using instead web diagrams. However, the resulting 2-categories are equivalent as noted in \cite[Proposition 3.22]{Queffelec}. The use of ladder-based foams will be unavoidable in what follows, hence we use this definition.
\end{remark}
\begin{remark}
Note that dots can only be placed on the yellow facets, which are traces of rungs of the ladder diagrams obtained by slicing horizontally through each foam diagram. In \cite{Queffelec} and \cite{Lauda2012} (and indeed \cite{Mackaay2007}) there are algebraic relations allowing dots to migrate to adjacent facets. However, this uses the fact that they are working specifically with $\mathfrak{sl}(n)$ foams, and in particular uses the rule that $n$ dots on a 1-labelled facet gives the $0$ foam. In general, dot migration is more complicated, as we shall see in Section \ref{sec:non-local}.
\end{remark}

\section{Categorification of Irreducible Highest Weight Modules}\label{sec:catirreducibleweightmodules}
In this section, we review the categorification of highest-weight $U_q(\mathfrak{gl}(p))$-modules using cyclotomic KLR algebras.

From now on, we choose the base ring $\kk$ to be a field.

\begin{definition}
Let $\lambda=(\lambda_1,\ldots,\lambda_p)$ be a dominant weight. We define the cyclotomic KLR algebra $R^\lambda(n)$ of degree $n$ of type $A_{p-1}$ to be the quotient of the KLR algebra $R(n)$ by the 2-sided ideal generated by $\sum_{\nu\in I^n}x_1^{\lambda_{\nu_1}-\lambda_{\nu_1+1}}e(\nu)$. We let $R^\lambda=\bigoplus_n R^\lambda(n)$.
\end{definition}

The importance of the cyclotomic KLR algebras is that they categorify highest-weight modules of $U_q(\mathfrak{sl}_{p})$ (or equivalently, $U_q(\mathfrak{gl}_{p})$), as conjectured in \cite{Khovanov2009} and proven (for general quantum groups $U_q(\mathfrak{g})$) by Kang and Kashiwara \cite{Kang2012}. An alternative proof has been given by Webster \cite{Webster2013}, and in the case of type $A_p$ a proof was also given by Vaz \cite{Vaz2013}.

\begin{thm}\label{thm:catoffiniterep}
The category of projective modules $\operatorname{p-mod}R^\lambda$ is a 2-representation of $U_q(\mathfrak{gl}(p))$ and satisfies
\[ K_0 (\operatorname{p-mod}R^\lambda)\otimes_{\Z[q,q^{-1}]} \C(q) \cong V_{p+1}(\lambda) \]
where $V_{p+1}(\lambda)$ is the irreducible module of $U_q(\mathfrak{gl}(p+1))$ of highest-weight $\lambda$, and $K_0(\mathcal{C})$ denotes the split Grothendieck group of the category $\mathcal{C}$.
\end{thm}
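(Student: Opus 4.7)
The plan is to apply the Cautis--Lauda criterion (Theorem \ref{thm:integrable2rep}) to establish that $\operatorname{p-mod}R^\lambda$ is an integrable $2$-representation of $U_q(\mathfrak{gl}(p))$, and then to identify its Grothendieck group with the irreducible highest-weight module using the trivial $R^\lambda(0)$-module as a categorified highest-weight vector.

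First I would set up the weight decomposition and the $1$-morphisms $E_i$, $F_i$. Each idempotent $e(\nu)\in R^\lambda(n)$ determines a $\mathfrak{gl}(p)$-weight $\mu(\nu)=\lambda-\sum_{k=1}^n\alpha_{\nu_k}$, giving a decomposition $\operatorname{p-mod}R^\lambda=\bigoplus_\mu V_\mu$. The functor $F_i:V_\mu\to V_{\mu-\alpha_i}$ is then induction along the non-unital inclusion $R^\lambda(n)\hookrightarrow R^\lambda(n+1)$ through the idempotent $\sum_\nu e(\nu,i)$, with an appropriate grading shift, and $E_i$ is its restriction counterpart. The $2$-morphisms $x_i$ and $\tau_{ij}$ act by left multiplication by the corresponding KLR generators, so the KLR relations hold tautologically on the image.

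Next I would verify the remaining axioms of Theorem \ref{thm:integrable2rep}. Integrability and finiteness of hom-spaces reduce to the finite-dimensionality of each $R^\lambda(n)$; positivity of degree of $\operatorname{Hom}(\id_{V_\mu},q^\ell\id_{V_\mu})$ reduces to a standard statement about the centre of the cyclotomic KLR algebra. The structural heart of the argument is the biadjunction $F_i\dashv E_i$ together with the categorified $\mathfrak{sl}_2$ relation between $E_iF_i$ and $F_iE_i$; this is exactly the content of the cyclotomic categorification theorems of Kang--Kashiwara \cite{Kang2012} and Webster \cite{Webster2013}. Since $R^\lambda$ depends on $\lambda$ only through the $\mathfrak{sl}(p)$-weight $(\lambda_i-\lambda_{i+1})_i$, the $U_q(\mathfrak{sl}(p))$-statement of loc.\ cit.\ applies verbatim, and the resulting $U_q(\mathfrak{sl}(p))$ $2$-representation extends to a $U_q(\mathfrak{gl}(p))$ $2$-representation by recording the full $\mathfrak{gl}$-weight $\mu(\nu)$ on each weight-space.

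Finally I would identify the Grothendieck group. The class $[\mathbf{1}_\lambda]$ of the trivial $R^\lambda(0)=\kk$-module lives in $V_\lambda$ and is annihilated by every $E_i$, since there are no strands to restrict; so it is a categorified highest-weight vector of weight $\lambda$. The defining cyclotomic relation $x_1^{\lambda_{\nu_1}-\lambda_{\nu_1+1}}e(\nu)=0$ translates, through the induction description of $F_i$, into the Serre-type vanishing $F_i^{\lambda_i-\lambda_{i+1}+1}\cdot[\mathbf{1}_\lambda]=0$, so $K_0(\operatorname{p-mod}R^\lambda)\otimes_{\Z[q,q^{-1}]}\C(q)$ is a quotient of the irreducible highest-weight module with highest weight $\lambda$. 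Matching graded weight-space dimensions, which is a formal consequence of the categorified $\mathfrak{sl}_2$ relations verified in the previous paragraph, shows that this surjection is an isomorphism. The main obstacle is the Frobenius/biadjunction structure of the cyclotomic quotient underlying the $\mathfrak{sl}_2$ computation; I would cite \cite{Kang2012,Webster2013,Vaz2013} for this rather than re-proving it here.
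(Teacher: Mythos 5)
The paper gives no proof of this theorem: it is stated as a known result, citing Kang--Kashiwara \cite{Kang2012}, Webster \cite{Webster2013}, and Vaz \cite{Vaz2013}. Your sketch is a reasonable reconstruction of the argument behind those citations, and it matches the structure the paper itself uses when proving the $\mathfrak{gl}(\infty)$ analogue in Theorem \ref{thm:catofinfinityrep}: define $E_i$, $F_i$ by restriction and induction through $e(\nu,i)$, let the KLR generators act by left multiplication to give $x_i$ and $\tau_{ij}$, and identify $K_0$ via the trivial module over $R^\lambda(0)\cong\kk$ as the categorified highest-weight vector, with the Serre-type vanishing $F_i^{\lambda_i-\lambda_{i+1}+1}[\mathbf{1}_\lambda]=0$ produced by the cyclotomic relation.

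There is, however, one genuine soft spot. You organise the proof around the Cautis--Lauda criterion (Theorem \ref{thm:integrable2rep}) and dismiss the hypothesis on $\operatorname{Hom}(\id_{V_\mu},q^\ell\id_{V_\mu})$ as ``a standard statement about the centre of the cyclotomic KLR algebra.'' But the paper's own Remark \ref{rem:CLapproach} flags precisely this as an \emph{additional conjecture}: that the centre of $R^\lambda(\beta)$ vanishes in negative degrees and is one-dimensional in degree zero. This is not a standard fact, and as far as the paper (and the cited literature) is concerned it is open in general. The route that actually closes the proof is Kang--Kashiwara's and Webster's, which verifies Rouquier's axioms directly --- biadjunction of $E_i$ and $F_i$ together with invertibility of the $\rho_i$ and $\sigma_{ij}$ --- and does not require that positivity statement. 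Since you do cite those works, the conclusion stands, but you should say explicitly that it is their argument, and not the Cautis--Lauda criterion, that carries the load; as written, your outline quietly rests on an unproved hypothesis which the paper goes out of its way to point out.
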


In fact, $\operatorname{p-mod}R^\lambda\cong \bigoplus_{\nu\in \N [I]} \operatorname{p-mod}R^\lambda (\nu)$ where $R^\lambda (\nu)$ is the subalgebra generated by $|\nu|$ strands, with $\nu_i$ strands labelled $i$. This gives a decomposition of the categorified highest-weight module into categorified weight spaces. Note that $\operatorname{p-mod}R^\lambda (0)$ corresponds to the highest weight space.

\begin{remark}
Without the tensor product with $\C(q)$, $K_0(\operatorname{p-mod} R^\lambda)$ is isomorphic to the integral form of the representation $V_{p+1}(\lambda)$, which is a module over $\Z[q,q^{-1}]$.
\end{remark}

Kang and Kashiwara \cite{Kang2012} also give an action of the categorified quantum group $\mathcal U_q(\mathfrak{gl}(p+1))$ as follows:

The functors \[E_i:\operatorname{p-mod}R^\lambda (\nu+i) \to \operatorname{p-mod}R^\lambda (\nu)\]
are defined by a graded shift of the restriction of modules by the inclusion $R^\lambda (\nu) \hookrightarrow R^\lambda (\nu+i)e(\nu,i)$ where $e(\nu,i)$ is the sum over all colourings of identity strands except with the last strand coloured $i$. In other words, $N\mapsto e(\nu,i)N$. Also
\[ F_i:\operatorname{p-mod}R^\lambda (\nu) \to \operatorname{p-mod}R^\lambda (\nu+i) \]
is defined by induction
\[ F_i(M) = R^\lambda(\nu+i)e(\nu,i)\otimes_{R^\lambda(\nu)} M. \]
Kang and Kashiwara show that these are exact, projective, and are left and right adjoint to each other.

Note also that these functors carry an action of the KLR algebra as follows: consider a sequence $\mathbf{i}=(i_1,\ldots,i_n)\in \{1,\ldots,p\}^n$, and consider the functor \[E_{\mathbf{i}}=E_{i_n}\cdots E_{i_1}:\operatorname{p-mod}R^\lambda (\nu+\mathbf{i}) \to \operatorname{p-mod}R^\lambda (\nu):N\mapsto e(\nu,\mathbf{i}) N\]
which has the effect of projecting to strands with the left-most coloured by colourings according to $\nu$, and the right-most coloured in the fixed sequence $\mathbf{i}$. Then the action of the KLR algebra on $n$ strands $R(\emph i)$ intertwines the structure as a $R^{\lambda}(\nu)$-module, so defines a natural transformation $E^n_\mathbf{i} \to  E^n_\mathbf{i}$.

There is a similar action for the $F$'s, with `added strands' giving the KLR action.

We have the following:

\begin{thm}[Cautis, Lauda \cite{Cautis2011},Webster \cite{Webster2013}]\label{thm:cycloklrrep}
The category $\operatorname{p-mod} R^\lambda$ forms a 2-repre\-sentation of the categorified quantum group $\mathcal U_q (\mathfrak{gl}(p+1))$.
\end{thm}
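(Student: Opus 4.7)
The strategy is to verify that $\operatorname{p-mod}R^\lambda$ satisfies the hypotheses of Theorem \ref{thm:integrable2rep}, giving an integrable 2-representation of $\mathcal{U}_q(\mathfrak{gl}(p+1))$. The family of objects is $V_\mu := \operatorname{p-mod}R^\lambda(\nu)$, indexed by $\mu = \lambda - \sum_i \nu_i \alpha_i$ as $\nu$ ranges over $\N[I]$; the 1-morphisms $E_i$ and $F_i$ are the (suitably graded) restriction and induction functors defined immediately before the theorem statement, which are exact and preserve projectives by Kang--Kashiwara.

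My plan is to address the conditions roughly in order of increasing difficulty. First, several conditions are essentially tautological given the setup: the decomposition $\operatorname{p-mod}R^\lambda = \bigoplus_\nu \operatorname{p-mod}R^\lambda(\nu)$ gives the weight grading; the KLR 2-morphisms $x_k$ and $\tau_l$ act on $E_{\mathbf{i}}(M) = e(\nu,\mathbf{i})M$ by left multiplication in $R^\lambda$, and the defining KLR relations hold in $R^\lambda$ since it is a quotient of $R$; and the relation $F_jE_i \cong E_iF_j$ for $i \neq j$ follows from a short-exact-sequence argument using the KLR braid relation for non-adjacent colours (no dot terms appear), which splits because the adjoint structure provides a section. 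Finite-dimensionality of hom-spaces between projectives, and the fact that $\End(\id_{V_\mu})$ is concentrated in non-negative degrees and is one-dimensional in degree zero, follow from the graded local finiteness of cyclotomic KLR algebras in type $A$.

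The biadjunction $(E_i \dashv F_i)$ on the nose comes from Frobenius reciprocity, since $F_i$ is induction along $R^\lambda(\nu) \hookrightarrow e(\nu,i)R^\lambda(\nu+i)e(\nu,i)$. The subtler adjunction $(F_i \dashv E_i)$, with the specified grading shift, is the central non-trivial input: this is exactly Kang--Kashiwara's construction of a unit and counit built from the cyclotomic relation on $x_1$, which provides the missing bubble-like correction. Granting this, the integrability condition holds because any finite-dimensional projective $R^\lambda(\nu)$-module is supported on finitely many $\nu$, and $R^\lambda(\nu) = 0$ whenever $\nu$ lies outside the finite support of the crystal of $V(\lambda)$.

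The main obstacle is the categorified $\mathfrak{sl}_2$ commutation isomorphism
\[ F_iE_i V_\mu \oplus \bigoplus_{k=0}^{-\bar\mu_i -1} q^{\bar\mu_i + 2k+1} V_\mu \;\cong\; E_iF_i V_\mu \quad (\bar\mu_i \leq 0),\]
and its counterpart for $\bar\mu_i \geq 0$. This is proved via Kang--Kashiwara's short exact sequence relating $F_iE_i$ and $E_iF_i$ as subfunctors of a common object, where the cokernel/kernel is computed to be a direct sum of shifted copies of the identity by using the cyclotomic relation on $x_1$ together with the KLR quadratic relation $\tau_l^2 e(\nu) = (t_{\nu_l\nu_{l+1}}x_l + t_{\nu_{l+1}\nu_l}x_{l+1})e(\nu)$ when $\nu_l = \nu_{l+1}\pm 1$. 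Splitting of this sequence is then obtained from the biadjoint structure established above. Once all conditions of Theorem \ref{thm:integrable2rep} are verified, the Grothendieck-group computation in Theorem \ref{thm:catoffiniterep} ensures the resulting 2-representation categorifies $V_{p+1}(\lambda)$, completing the proof.
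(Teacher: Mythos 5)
The paper cites this theorem to Cautis--Lauda \cite{Cautis2011} and Webster \cite{Webster2013} without reproving it, so the only real comparison is with those sources; your sketch follows the Cautis--Lauda route of verifying the hypotheses of Theorem \ref{thm:integrable2rep}, and the overall scaffolding (weight decomposition via $\operatorname{p-mod}R^\lambda(\nu)$, KLR action by left multiplication on $e(\nu,\mathbf{i})M$, biadjunction from Kang--Kashiwara's Frobenius reciprocity together with their cyclotomic unit/counit, the $\mathfrak{sl}_2$ commutation isomorphism from their short exact sequence, and integrability from finiteness of support) is the correct outline.

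The genuine gap is the assertion that $\operatorname{Hom}(\id_{V_\mu}, q^l\id_{V_\mu})$ vanishes for $l<0$ and is one-dimensional spanned by the identity for $l=0$ \emph{follows from graded local finiteness of cyclotomic KLR algebras in type $A$}. It does not: natural transformations of the identity functor on $\operatorname{p-mod}R^\lambda(\beta)$ are the graded centre $Z(R^\lambda(\beta))$, and local finiteness only bounds the dimension of each graded piece of that centre; it gives no control over which degrees appear, nor that the degree-zero piece is exactly $\kk\cdot\id$. This is precisely the condition the paper singles out in Remark \ref{rem:CLapproach} as an \emph{additional conjecture} that the Cautis--Lauda approach depends on. To close the gap you must either cite a proof of the centre statement for type-$A$ cyclotomic KLR algebras or, as the paper's attribution suggests, switch to Webster's argument, which does not rely on this condition. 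As a minor secondary point, your parenthetical claim that no dot terms appear in the KLR quadratic relation ``for non-adjacent colours'' only covers $|i-j|>1$; for $|i-j|=1$ dot terms do appear, though Kang--Kashiwara's sequence still degenerates to $F_jE_i\cong E_iF_j$ for all $i\neq j$.
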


\begin{remark}\label{rem:CLapproach}Cautis and Lauda's approach depends upon the additional conjecture that the centre of $R^\lambda(\beta)$ is 1-dimensional in degree $0$ and $0$ in negative degrees.
\end{remark}

\subsection{Categorification of highest-weight modules of \texorpdfstring{$U_q(\mathfrak{gl}(\infty))$}{Uq(gl(infinity))}}
Here we extend the categorification of highest-weight modules of $U_q(\mathfrak{gl}(p))$ to a categorification of the highest-weight modules $V_\infty(\lambda)$ of $U_q(\mathfrak{gl}(\infty))$ defined in Section \ref{sec:glinfinity}.

Given a dominant weight $\lambda\in \Z^p$, there is a well-defined inclusion of non-unital $\kk$-algebras
\[ R^\lambda \hookrightarrow R^{(\lambda,0)} \]
given by taking the projectors $e(\nu)\in R^\lambda$ to $e(\nu)\in R^{(\lambda,0)}$.

\begin{definition}
Let $\lambda\in \Z^\infty$ be a dominant weight with all but finitely many entries equal to zero, and suppose that $\lambda_i=0$ for $i>p$. Then we define $R^\lambda$ to be the direct limit
\[ R^\lambda = \lim_\rightarrow \left( \xymatrix{ R^{(\lambda_1,\ldots,\lambda_p)} \ar[r] & R^{(\lambda_1,\ldots,\lambda_p,0)} \ar[r] & R^{(\lambda_1,\ldots,\lambda_p,0,0)} \ar[r] & \cdots}\right). \]
\end{definition}

For a more concrete description the algebra $R^\lambda(n)$ of degree $n$ is generated by $e(\nu)$ with $\nu\in \N^n$, $x_k$ with $1\leq k\leq n$ and $\tau_l$ with $1\leq l\leq n-1$, subject to the relations in Section \ref{sec:KLRalgebras}, and the cyclotomic relation
\[ x_1^{\lambda_{\nu_1}-\lambda_{\nu_1+1}}e(\nu)=0 \]
for all $\nu\in \N^n$. The algebra $R^\lambda$ is non-unital, since the sum over all orthogonal idempotents is infinite.

We define $\N [\N]$ to be the commutative semi-group consisting of elements
\[ \beta=\sum_{i\in \N} \beta_i\cdot i \]
where $\beta_i\in \N\cup \{0\}$ is $0$ for all but finitely many $i$. We let $|\beta|=\sum_i\beta_i$.

Given $\beta\in \N[\N]$ we define $R^\lambda(\beta)=R^\lambda \sum_\nu e(\nu)$ where the sum is over all $\nu\in \N^{|\beta|}$ such that the entry $i$ appears $\beta_i$ times in the entries of $\nu$. We also let \[ e(\beta,i)=\sum_{\nu\in \N^{|\beta|+1},\nu_{|\beta|+1}=i}e(\nu).\]

Denote by $\operatorname{p-mod}R^\lambda$ the category of finite-dimensional projective left $R^\lambda$-modules. As before, $\operatorname{p-mod}R^\lambda$ breaks into a coproduct
\[ \operatorname{p-mod}R^\lambda \cong \sum_\beta \operatorname{p-mod}R^\lambda(\beta) \]
over all $\beta\in \N[\N]$, corresponding to the coproduct $R^\lambda=\sum_\beta R^\lambda(\beta)$.

\begin{thm}\label{thm:catofinfinityrep}
The category $\operatorname{p-mod}R^\lambda$ is a 2-representation of $U_q(\mathfrak{gl}(\infty))$ which categorifies the representation $V_\infty (\lambda)$.
\end{thm}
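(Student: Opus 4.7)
The plan is to bootstrap from Theorem~\ref{thm:cycloklrrep} (the finite-rank case) via the direct-limit structure of $R^\lambda$, rather than attempting a direct proof. First I would observe that since $\lambda$ has finite support with $\lambda_i=0$ for $i>p$, the algebra $R^\lambda$ is a direct limit of the finite cyclotomic KLR algebras $R^{(\lambda_1,\ldots,\lambda_p,0,\ldots,0)}$, and moreover, for any $\beta\in\N[\N]$ supported in $\{1,\ldots,p+k\}$, we have $R^\lambda(\beta)=R^{(\lambda_1,\ldots,\lambda_p,0,\ldots,0)}(\beta)$ with $p+k$ trailing zeros. Consequently every finite-dimensional projective $R^\lambda$-module lives in one of the subalgebras $R^{(\lambda,0,\ldots,0)}$, so $\operatorname{p-mod}R^\lambda=\varinjlim \operatorname{p-mod}R^{(\lambda,0,\ldots,0)}$.

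Next I would define the induction/restriction functors $E_i,F_i$ exactly as in Kang--Kashiwara: $E_i(N)=e(\beta,i)N$ and $F_i(M)=R^\lambda(\beta+i)e(\beta,i)\otimes_{R^\lambda(\beta)}M$, together with the dot and crossing natural transformations given by left multiplication by $x_k$ and $\tau_l$, and the adjunction units/counits inherited from the finite setting. These are all compatible with the inclusions $R^{(\lambda,0,\ldots,0)}\hookrightarrow R^{(\lambda,0,\ldots,0,0)}$, so each of Rouquier's 2-representation axioms (biadjointness, KLR relations on the $E_i$'s, invertibility of $\rho_i$ and $\sigma_{ij}$ for $i\ne j$) need only be checked on a single finite stage where it follows from Theorem~\ref{thm:cycloklrrep}. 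For indices $i>p$ the functors $E_i,F_i$ act as zero on the highest weight space $\operatorname{p-mod}R^\lambda(0)$ because the idempotent $e(i)$ vanishes by the cyclotomic relation $x_1^{\lambda_i-\lambda_{i+1}}e(i)=e(i)=0$, in perfect accord with $F_iv_\lambda=0$ in $V_\infty(\lambda)$ for $i>p$. Integrability follows object-by-object since any indecomposable projective is annihilated by $E_i^n$ and $F_i^n$ for $n$ depending only on $\beta$.

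For the Grothendieck group calculation, the direct-limit presentation gives $K_0(\operatorname{p-mod}R^\lambda)\otimes_{\Z[q,q^{-1}]}\C(q)=\varinjlim K_0(\operatorname{p-mod}R^{(\lambda,0,\ldots,0)})\otimes \C(q)\cong \varinjlim V_{p+k+1}(\lambda,0,\ldots,0)$ by Theorem~\ref{thm:catoffiniterep}. I would then verify that this direct limit of irreducible highest-weight modules, under the inclusions induced by $\iota_0$, is precisely $V_\infty(\lambda)$: the class of the rank-one projective $R^\lambda(0)$ is the highest-weight vector $v_\lambda$, the relations $E_iv_\lambda=0$ and $F_i^{(\lambda_i-\lambda_{i+1}+1)}v_\lambda=0$ are forced by the cyclotomic relation at each finite stage, and by construction $V_\infty(\lambda)$ is the universal such module. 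The main obstacle I anticipate is not the axiom-checking but a careful bookkeeping argument that the defining relations $F_i^{(\lambda_i-\lambda_{i+1}+1)}v_\lambda=0$ are exactly the relations that appear in the Grothendieck group, i.e.\ that no \emph{additional} relations are introduced by passing to the limit, and symmetrically that no weight-space classes are lost. This amounts to showing that the inclusions $V_{p+k+1}(\lambda,0,\ldots,0)\hookrightarrow V_{p+k+2}(\lambda,0,\ldots,0,0)$ are injective at the level of each weight space (a classical branching statement via the $\mathfrak{gl}(\infty)$ analogue of category $\mathcal O$ as in Du--Fu), which together with the local $2$-representation structure completes the identification.
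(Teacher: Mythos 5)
Your proposal is correct and takes essentially the same route as the paper's proof: both reduce to the finite-rank case by observing that for each $\beta\in\N[\N]$ the algebra $R^\lambda(\beta)$ coincides with a finite cyclotomic KLR algebra $R^{(\lambda_1,\ldots,\lambda_p)}(\beta)$ for large enough $p$, so that Rouquier's axioms (adjunction, the KLR relations, invertibility of $\rho_i$ and of $\sigma_{ij}$ for $i\neq j$) can be checked weight-by-weight and imported directly from Theorem~\ref{thm:cycloklrrep}.

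The one stylistic difference worth noting is in how the identification $K_0\otimes\C(q)\cong V_\infty(\lambda)$ is justified. You phrase the argument as a filtered colimit of Grothendieck groups and then worry, correctly, about whether the transition maps might introduce extra relations or drop weight-space classes, pointing to the branching rule (Theorem~\ref{thm:branchingrule}) as the way to settle it. The paper instead exploits that $R^\lambda(\beta)$ is \emph{literally equal} to the corresponding finite-rank algebra (not merely included in it), so each categorified weight space of $V_\infty(\lambda)$ is identical on the nose to a weight space of some $V_{p+1}(\lambda|_p)$; this sidesteps the stabilization question entirely, and the conclusion then follows from the 2-representation being generated by its highest-weight category $\operatorname{p-mod}R^\lambda(0)\cong\operatorname{Vect}_\kk$ together with the universality of highest-weight categorifications. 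Your extra observation that $e(i)=0$ for $i$ with $\lambda_i=\lambda_{i+1}$ (in particular $i>p$), matching $F_iv_\lambda=0$, is a nice explicit check that the paper leaves implicit, and your integrability remark is also fine; there is no gap, only a slightly more roundabout handling of the Grothendieck-group step than the paper's direct equality of algebras makes necessary.
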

\begin{proof}
For each $\beta\in \N[\N]$ and $i\in \N$ there exist functors
\[ E_i:\operatorname{p-mod}R^\lambda(\beta+i)\to \operatorname{p-mod}R^\lambda(\beta):N\mapsto q^{1-\lambda_i+\lambda_{i+1}+\beta_i-\beta_{i+1}}e(\beta,i)R^\lambda(\beta+i)\otimes_{R^\lambda(\beta+i)}N \]
\[ F_i:\operatorname{p-mod}R^\lambda(\beta)\to \operatorname{p-mod}R^\lambda(\beta+i):M\mapsto R^\lambda (\beta+i)e(\beta,i)\otimes_{R^\lambda(\beta)}M \]
which are well-defined since $R^\lambda(\beta+i)e(\beta,i)$ is a projective right $R^\lambda(\beta)$-module and $e(\beta,i)R^\lambda(\beta+i)$ is a projective left $R^\lambda(\beta)$-module by the main theorem of Kang and Kashiwara \cite{Kang2012}.

The 2-morphism $x_i:E_i1_\lambda\to E_i1_\lambda$ is given by left-multiplication by $x_{|\beta|+1}$ on $e(\beta,i)R^\lambda(\beta+i)\otimes_{R^\lambda(\beta+i)}N$.
For the action of $\tau_{ij}$ we note that
\[ E_iE_j(N)\cong e(\beta,j,i)R^{\lambda}(\beta+i+j)\otimes_{R^\lambda(\beta+i+j)}N \]
where $e(\beta,j,i)=\sum_{\nu\in \N^{|\beta|+2},\nu_{|\beta|+2}=i,\nu_{|\beta|+1}=j}e(\nu)$, and so $\tau_{ij}$ is given by left-multiplication by $\tau_{n+1}$.

Each algebra $R^\lambda(\beta)$ is exactly the same as $R^{(\lambda_1,\ldots,\lambda_p)}(\beta)$ where $p$ is such that $\beta_i=0$ for $i>p$, and therefore relations on the natural transformations between these functors are exactly the same as in the case of $U_q(\mathfrak{gl}(p))$. Hence by Theorem \ref{thm:cycloklrrep}, we must have $E_i$ left-adjoint to $F_i$ and the morphisms $\rho_i$ and $\sigma_{ij}$ map to isomorphisms for $i\neq j$.

Hence $\operatorname{p-mod}R^\lambda$ is a 2-representation of $U_q(\mathfrak{gl}(\infty))$. Since this is generated by the category $R^\lambda(0)\cong \operatorname{p-mod}(\kk)=\operatorname{Vect}_\kk$, we conclude that this is a categorification of $V_\infty(\lambda)$.
\end{proof}

\begin{remark}
As in remark \ref{rem:CLapproach}, this could be proven more directly if we knew that the centre of $R^\lambda(\beta)$ contained no negative degree elements, and a 1-dimensional space of degree $0$ elements.
\end{remark}

The categories $\operatorname{p-mod}R^\lambda(\beta)$ categorify the weight-space of $V_\infty(\lambda)$ of weight $\lambda-\sum_i \beta_i\alpha_i$.

\section{Categorification of \texorpdfstring{$\operatorname{Rep}(\mathfrak{gl}(m|n))$}{Rep(gl(m|n))}}\label{sec:catofrep}
Now we return to the programme of categorifying $\operatorname{Rep}(\mathfrak{gl}(m|n))$. The idea in this section is that we can use the decomposition of $\dot{U}_q^\infty(\mathfrak{gl}(\infty))$ into a direct sum indexed by dominant weights satisfying $\mu_{n+1}\leq m$, and categorify each direct summand separately.

Given a dominant weight $\mu$ with $\mu_{n+1}\leq m$, we have, by the results of the previous section, a 2-functor
\[ \Phi_\mu:\dot{\mathscr{U}}^\infty_Q(\mathfrak{gl}(\infty)) \to \operatorname{p-mod}R^\mu. \]

Let $\mathscr{E}(V_\infty(\mu))$ be the quotient of  $\dot{\mathscr{U}}^\infty_Q(\mathfrak{gl}(\infty))$ by the kernel of $\Phi_\mu$ (that is, quotient by the 2-morphisms acting as $0$ on $\operatorname{p-mod}R^\mu$). By construction, there is an integrable 2-repre\-sentation \[ \mathscr{E}(V_\infty(\mu))\to \operatorname{p-mod}R^\mu.\]  By Theorem \ref{thm:catofinfinityrep} we have the following:

\begin{thm}\label{thm:catEnd}
There is a canonical isomorphism
\[ K_0(\mathscr{E}(V_\infty(\mu)))\otimes_{\Z[q,q^{-1}]} \C(q)\cong \operatorname{End}_{fr}(V_\infty(\mu)). \]
\end{thm}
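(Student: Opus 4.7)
The strategy is to apply $K_0(-)\otimes_{\Z[q,q^{-1}]}\C(q)$ to the 2-representation $\mathscr{E}(V_\infty(\mu))\to \operatorname{p-mod} R^\mu$ and invoke Theorem \ref{thm:catofinfinityrep} to identify $K_0(\operatorname{p-mod} R^\mu)\otimes\C(q)\cong V_\infty(\mu)$. This yields a $\C(q)$-algebra homomorphism
\[\Psi:K_0(\mathscr{E}(V_\infty(\mu)))\otimes_{\Z[q,q^{-1}]}\C(q)\to \operatorname{End}_{\C(q)}(V_\infty(\mu))\]
that decategorifies the action of $\dot U_q^\infty(\mathfrak{gl}(\infty))$ on $V_\infty(\mu)$ coming from Theorem \ref{thm:catofinfinityrep}. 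I would then show successively that the image of $\Psi$ is contained in $\operatorname{End}_{fr}(V_\infty(\mu))$, that $\Psi$ is surjective onto $\operatorname{End}_{fr}$, and that $\Psi$ is injective.

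For the image containment, any 1-morphism in $\mathscr{E}(V_\infty(\mu))$ has fixed source and target objects $1_\nu$, $1_{\nu'}$, so acts between the weight spaces $V_\infty(\mu)_\nu$ and $V_\infty(\mu)_{\nu'}$. These weight spaces are finite-dimensional by integrability of the highest-weight module (equivalently, via finite-dimensionality of $R^\mu(\beta)$ for each $\beta\in\N[\N]$ and Theorem \ref{thm:catofinfinityrep}). Thus every 1-morphism, and hence every finite $\C(q)$-combination of 1-morphism classes, gives a finite-rank endomorphism of $V_\infty(\mu)$.

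For surjectivity, the composite $\dot U_q^\infty(\mathfrak{gl}(\infty))\cong K_0(\dot{\mathscr{U}}^\infty_Q(\mathfrak{gl}(\infty)))\otimes\C(q)\twoheadrightarrow K_0(\mathscr{E}(V_\infty(\mu)))\otimes\C(q)\xrightarrow{\Psi}\operatorname{End}_{\C(q)}(V_\infty(\mu))$ agrees with the action map of $\dot U_q^\infty(\mathfrak{gl}(\infty))$ on $V_\infty(\mu)$. Taking direct limits of the finite-$p$ identifications $\dot U_q(\mathfrak{gl}(p))/I_\lambda\cong\bigoplus_{l\in L(\lambda)}\operatorname{End}_{\C(q)}(V_p(l))$ from Lemma \ref{algiso}, and observing that any finite-rank endomorphism of $V_\infty(\mu)$ is supported on finitely many weight spaces and is therefore already detected by the $\dot U_q(\mathfrak{gl}(p))$-action after restriction for $p$ sufficiently large, one sees that the image of this action map is exactly $\operatorname{End}_{fr}(V_\infty(\mu))$. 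Hence $\Psi$ is surjective.

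The hard part will be injectivity. By construction the quotient $\dot{\mathscr{U}}^\infty_Q\to \mathscr{E}(V_\infty(\mu))$ kills precisely those 2-morphisms acting as zero on $\operatorname{p-mod} R^\mu$, so the induced 2-functor on Hom-categories is faithful on 2-morphisms. Consequently an indecomposable 1-morphism $f$ of $\mathscr{E}(V_\infty(\mu))$ satisfies $\mathrm{id}_f\neq 0$ iff $\Phi_\mu(f)$ is a nonzero endofunctor of $\operatorname{p-mod} R^\mu$, iff $[\Phi_\mu(f)]$ is a nonzero element of $\operatorname{End}_{fr}(V_\infty(\mu))$. To promote this to $K_0$-injectivity one must verify that the Karoubi completion of $\mathscr{E}(V_\infty(\mu))$ introduces no new indecomposable summands beyond those descending from indecomposables of $\dot{\mathscr{U}}^\infty_Q$ that survive the kernel of $\Phi_\mu$; equivalently, that the kernel of $\dot U_q^\infty(\mathfrak{gl}(\infty))\twoheadrightarrow K_0(\mathscr{E})\otimes\C(q)$ coincides with the annihilator of $V_\infty(\mu)$. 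The obstacle is thus to rule out \emph{spurious} idempotents arising in the quotient; I would tackle this by appealing to the $\mathfrak{gl}(\infty)$-analogue of Lemma \ref{algiso}, obtained as a direct limit of the finite-$p$ decompositions, which identifies the annihilator of $V_\infty(\mu)$ with the sum of the other summands $\operatorname{End}_{fr}(V_\infty(\lambda))$ for $\lambda\neq\mu$. Combined with Webster's identification of indecomposable 1-morphisms of $\dot{\mathscr{U}}^\infty_Q$ with Lusztig's canonical basis, this matches kernels and yields the desired injectivity.
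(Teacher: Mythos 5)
Your proposal is correct in outline and relies on the same key inputs as the paper's proof (Webster's identification of indecomposable $1$-morphisms with Lusztig's canonical basis, Theorem~\ref{thm:catofinfinityrep}, and the decomposition of $\dot U_q^\infty(\mathfrak{gl}(\infty))$ into $\sum_\lambda \operatorname{End}_{fr}(V_\infty(\lambda))$, i.e.\ Lemma~\ref{lem:decomp}). The one genuine difference is directional: the paper builds a map $\operatorname{End}_{fr}(V_\infty(\mu))\to K_0(\mathscr{E}(V_\infty(\mu)))\otimes\C(q)$ by sending a surviving canonical basis element to the class of the corresponding indecomposable $1$-morphism, whereas you decategorify the $2$-representation to get $\Psi$ going the other way and then argue bijectivity. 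Both give the same isomorphism.

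On the injectivity step: you are right to flag it as the subtle point, but you over-diagnose the problem. The ``spurious idempotents'' worry does not in fact arise, because the quotient $\dot{\mathscr{U}}^\infty_Q(\mathfrak{gl}(\infty))\to\mathscr{E}(V_\infty(\mu))$ leaves the $1$-morphism objects unchanged (only $2$-morphism spaces are quotiented), so the induced map on split Grothendieck groups is automatically surjective. Hence every class in $K_0(\mathscr{E}(V_\infty(\mu)))$ is a $\Z[q,q^{-1}]$-combination of classes of canonical-basis indecomposables from $\dot{\mathscr{U}}^\infty_Q(\mathfrak{gl}(\infty))$. Under $\Psi$ these go either to $0$ (when the canonical basis element annihilates $V_\infty(\mu)$; by Theorem~\ref{thm:catofinfinityrep} the corresponding $1$-morphism then acts trivially on $\operatorname{p-mod}R^\mu$, so its class already vanishes in $K_0(\mathscr{E}(V_\infty(\mu)))$) or to the distinct nonzero basis elements of $\operatorname{End}_{fr}(V_\infty(\mu))$. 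This gives injectivity with no further idempotent-lifting input; the $\mathfrak{gl}(\infty)$-analogue of Lemma~\ref{algiso} that you invoke is really what underpins surjectivity (where your argument is fine) rather than the extra tool required for injectivity. Your observation that a nonzero endofunctor of $\operatorname{p-mod}R^\mu$ preserving projectives induces a nonzero class on $K_0$ is a useful ingredient if one wants to reverse the implication, and is compatible with the above.
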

\begin{proof}
The algebra $\operatorname{End}_{fr}(V_\infty(\mu))$ inherits a basis from the canonical basis of $\dot U_q(\mathfrak{gl}(\infty))$. Each non-zero basis element corresponds to an indecomposable object of $\dot{\mathscr{U}}_Q(\mathfrak{gl}(\infty))$ that acts non-trivially on $\operatorname{p-mod}R^\mu$ by Theorem \ref{thm:catofinfinityrep}, so in particular its identity 2-morphism is not killed. This gives rise to an injective map 
\[ \operatorname{End}_{fr}(V_\infty(\mu)) \to K_0(\mathscr{E}(V_\infty(\mu)))\otimes_{\Z[q,q^{-1}]} \C(q) \]
sending the basis element to the indecomposable object. Any indecomposable object not reached by the map acts trivially on $\operatorname{p-mod}R^\mu$ by Theorem \ref{thm:catofinfinityrep}, so the map is also surjective.
\end{proof}

Hence we define the categorification of $\operatorname{Rep}(\mathfrak{gl}(m|n))$ as follows:

\begin{definition}
Let
\[ \mathscr{R}(\mathfrak{gl}(m|n))=\sum_{\mu\in H} \mathscr{E}(V_\infty(\mu)) \]
where the sum indicates the coproduct of 2-categories.
\end{definition}

\begin{thm}
The 2-category $\mathscr{R}(\mathfrak{gl}(m|n))$ satisfies
\[ K_0(\mathscr{R}(\mathfrak{gl}(m|n)))\otimes_{\Z[q,q^{-1}]}\C(q)\cong \operatorname{Rep}(\mathfrak{gl}(m|n)). \]
\end{thm}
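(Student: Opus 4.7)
The proof is essentially a matter of stringing together results already established in the paper, so my plan is more about bookkeeping than introducing new ideas.

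First I would observe that the split Grothendieck group of a coproduct of 2-categories is the direct sum of the split Grothendieck groups of the summands. Applied to the definition $\mathscr{R}(\mathfrak{gl}(m|n))=\sum_{\mu\in H}\mathscr{E}(V_\infty(\mu))$, this gives
\[ K_0(\mathscr{R}(\mathfrak{gl}(m|n)))\otimes_{\Z[q,q^{-1}]}\C(q) \cong \bigoplus_{\mu\in H} K_0(\mathscr{E}(V_\infty(\mu)))\otimes_{\Z[q,q^{-1}]}\C(q). \]
Then Theorem \ref{thm:catEnd} identifies each summand with $\operatorname{End}_{fr}(V_\infty(\mu))$, so the right-hand side becomes $\sum_{\mu\in H}\operatorname{End}_{fr}(V_\infty(\mu))$ (coproduct of non-unital algebras, matching the $K_0$ coproduct).

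Next I would invoke Lemma \ref{lem:decomp}, which identifies $\sum_{\mu\in H}\operatorname{End}_{fr}(V_\infty(\mu))$ with $\dot U_q^{(m|n)}(\mathfrak{gl}(\infty))$ as algebras, and then apply Theorem \ref{thm:descriptionOfRep2}, which gives an equivalence of categories $\dot U_q^{(m|n)}(\mathfrak{gl}(\infty))\cong \operatorname{Rep}(\mathfrak{gl}(m|n))$. Chaining these isomorphisms gives the desired result.

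The only point that needs a little care is checking that the isomorphism we obtain is compatible with the algebra structure, i.e.\ that composition of 2-morphisms in $\mathscr{R}(\mathfrak{gl}(m|n))$ corresponds under $K_0$ to multiplication in $\operatorname{Rep}(\mathfrak{gl}(m|n))$. This follows because each 2-functor $\mathscr{E}(V_\infty(\mu))\to \operatorname{p-mod}R^\mu$ is strict, and the induced map on $K_0$ is a ring homomorphism with respect to horizontal composition of 2-morphisms, which on the Grothendieck group corresponds to composition of morphisms in $\operatorname{Rep}(\mathfrak{gl}(m|n))$ via the Theorem \ref{thm:descriptionOfRep2} equivalence. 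I do not expect any genuine obstacle here: all the nontrivial content has already been packaged into Theorems \ref{thm:catEnd}, \ref{thm:descriptionOfRep2} and Lemma \ref{lem:decomp}, and the current statement is essentially an assembly of these.
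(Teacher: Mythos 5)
Your proposal is correct and takes essentially the same approach as the paper, which also derives the result by combining Theorem \ref{thm:catEnd}, Lemma \ref{lem:decomp}, and Theorem \ref{thm:descriptionOfRep2}; you merely spell out the bookkeeping (compatibility of $K_0$ with coproducts, preservation of the algebra structure) that the paper leaves implicit.
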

\begin{proof}
This follows from Theorem \ref{thm:catEnd} and the decomposition of $\operatorname{Rep}(\mathfrak{gl}(m|n))$ into $\sum_{\mu\in H} \operatorname{End}(V_\infty(\mu))$ from Lemma \ref{lem:decomp} and Theorem \ref{thm:descriptionOfRep2}.
\end{proof}

\subsection{The Foam Description of the 2-category}
One advantage of this categorification is that each 2-category $\mathcal{E}(V_\infty(\lambda))$ admits a description by foams, since we have the equivalence of 2-categories $\operatorname{Foam}\cong \dot{\mathscr{U}}^\infty_Q(\mathfrak{gl}(\infty))$ and $\mathcal{E}(V_\infty(\lambda))$ occurs as a quotient of $\dot{\mathscr{U}}^\infty_Q(\mathfrak{gl}(\infty))$.

The relation
\[ x_1^{\lambda_{\nu_1}-\lambda_{\nu_1+1}}e(\nu)=0 \]
translates to a relation on foams, given by Figure \ref{fig:foamrelation}
\begin{figure}[h!]
$\raisebox{-50pt}{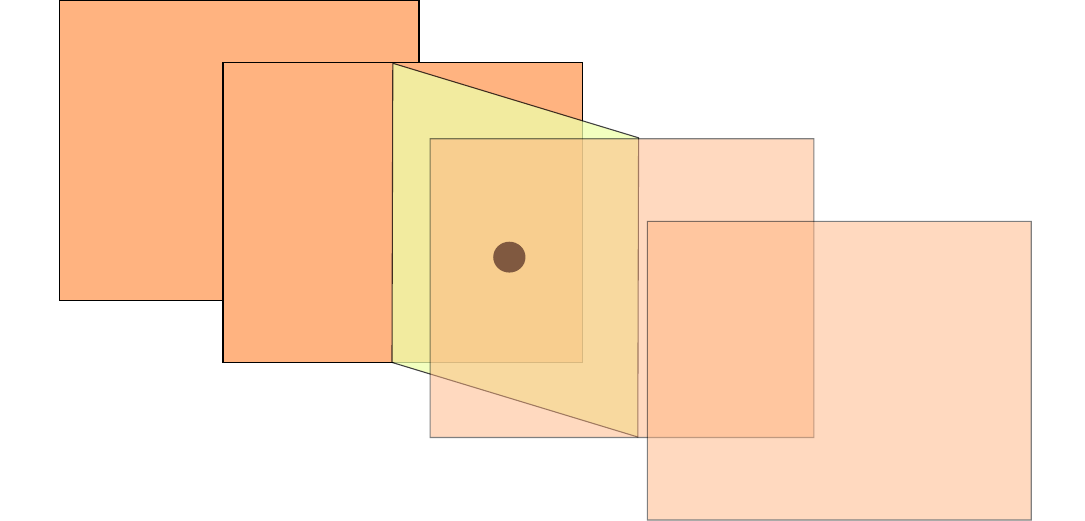}=0$
\caption{}\label{fig:foamrelation}
\end{figure}
where $\lambda=(\lambda_1,\lambda_2,\ldots)$, and $\nu_1=i$. So sufficiently large numbers of dots on horizontal facets give the $0$ foam.

This effectively means that foams provide a diagram calculus for natural transformations of compositions of the endofunctors $F_i1_\lambda$ and $E_i1_\lambda$ in the category of finite-dimensional projective left $R^\lambda$-modules.

\subsection{Braiding}\label{sec:catBraiding}
In Section \ref{sec:braiding}, we defined a braiding on $\operatorname{Rep}(\mathfrak{gl}(m|n))$ by the braiding on $\dot U_q^\infty(\mathfrak{gl}(\infty))$. There is a categorification of this braiding $\mathcal{T}_i1_\lambda$ as a chain complex in $\dot{\mathscr{U}}_Q^\infty(\mathfrak{gl}(\infty))$ by
\[\xymatrix{F_i^{(\lambda_{i}-\lambda_{i+1})} 1_\lambda \ar[r]^-{d_1} & qF_i^{(\lambda_i-\lambda_{i+1}+1)}E_i 1_\lambda\ar[r]^-{d_2} & \cdots \ar[r]^-{d_t}& q^{s}F_i^{(\lambda_i-\lambda_{i+1}+s)}E_i^{(s)}1_\lambda \ar[r]^-{d_{t+1}} &\cdots} \]
if $\lambda_i-\lambda_{i+1}\geq 0$, with grading shifted by $q^{(m-n)\lambda_i\lambda_{i+1}-\lambda_i}$, and by
\[\xymatrix{E_i^{(\lambda_{i+1}-\lambda_i)} 1_\lambda \ar[r]^-{d_1} & qE_i^{(\lambda_{i+1}-\lambda_{i}+1)} F_i1_\lambda\ar[r]^-{d_2} & \cdots \ar[r]^-{d_t}& q^{s}E_i^{(\lambda_{i+1}-\lambda_{i}+s)}F_i^{(s)}1_\lambda \ar[r]^-{d_{t+1}} &\cdots} \]
if $\lambda_{i+1}-\lambda_i\geq 0$, with grading shifted by $q^{(m-n)\lambda_i\lambda_{i+1}-\lambda_i}$. The differentials $d_t$ in the second complex are explicitly defined using `thick calculus' by Lauda, Queffelec and Rose \cite[Section 2.2]{Lauda2012}. Each complex is finite for the same reason the sum defining the braiding in Section \ref{sec:braiding} is finite: $E_i^{(s)}1_\lambda$ and $F_i^{(s)}1_\lambda$ are $0$ for sufficiently large $s$ in the category $\dot U_q^\infty(\mathfrak{gl}(\infty))$. The complex $\mathcal{T}_i1_\lambda$ is invertible up to chain homotopy, so there also exists a chain complex $1_\lambda\mathcal{T}_i^{-1}$ with $T_i^{-1}T_i1_\lambda\sim 1_\lambda$.

The complexes $\mathcal{T}_i1_\lambda$ and $\mathcal{T}_i^{-1}1_\lambda$ descend to a complex over $\mathcal{E}(V_\infty(\mu))$ for each $\mu$. Given a link $L$ coloured with representations $\bigwedge^k_q \C^n_q$ of $U_q(\mathfrak{sl}(n))$, it is therefore possible to define the coloured $\mathfrak{sl}_n$ link homology $H^{i,j}_n(L)$ as follows: write the link $L$ as a ladder diagram, with a positive crossing taken to be the complex $T_i1_\lambda$ and a negative crossing to be $T_i^{-1}1_\lambda$ where $\lambda$ is the colouring of the uprights immediately below the crossing. Project this complex to $\sum\mathcal{E}(V_\infty(n,n,\ldots,n,0,\ldots))$ where the sum is over the number of $n$'s appearing in the entries of the highest-weight. The complex is non-zero in exactly one of the summands. Then by Section \ref{sec:foamssln}, we can apply the tautological functor $\operatorname{Foam}(1_{(n,n,\ldots,n,0,\ldots)},-)$ to each of the objects in the chain complex. The result is now a complex of $\kk$-vector spaces (or abelian groups) and we define $H_n^{i,j}(L)$ to be the homology of this complex. The result depends only on the coloured link $L$, and is independent of other choices. This turns out to be exactly the $\mathfrak{sl}(n)$ homologies defined for colour $\C^n_q$ by Khovanov and Rozansky \cite{Khovanov2004} and in higher colours $\bigwedge^k \C^n_q$ by Wu \cite{Wu2009}. This is the approach to $\mathfrak{sl}(n)$ link homology developed by Lauda, Queffelec and Rose \cite{Lauda2012} for $n=2,3$ and Queffelec and Rose \cite{Queffelec}.

The main problem with making this approach work in the general case of $\mathfrak{gl}(m|n)$ is that it is not possible to write a closed MOY diagram as a ladder diagram, since there is no $i>0$ such that $\bigwedge^i_q\C^{m|n}_q\cong \C_q$ and $(\C^{m|n}_q)^*$ does not occur as $\bigwedge^i_q\C^{m|n}_q$ for any $i$. To define a link homology, one would at least need a way to include duals in the diagram calculus. An approach to this in the decategorified setting has been demonstrated by Queffelec and Sartori \cite{Queffelec2014}, \cite{Queffelec2015}, using a doubled Schur algebra.

\subsection{Examples And Non-local Behaviour}\label{sec:non-local}

To understand this categorification, let us give some examples. 

Consider the identity $I$ on the standard representation $\C_q^{m|n}\to \C_q^{m|n}$. This is represented as a ladder by a single upright labelled $1$, that is, by the element $1_{(1,0,\ldots)}$ in $\dot{U}_q^{(m|n)} (\mathfrak{gl}(\infty))$. So in $\mathscr{R}(\mathfrak{gl}(m|n))$, the space of 2-morphisms that map $I\to I$ are given by the 2-morphisms in $\mathscr{E}(V_\infty(1,0,\ldots))$ as $V_\infty(1,0,\ldots)$ is the only representation with a $(1,0,\ldots)$-weight space. But then $(1,0,\ldots)$ is also the highest-weight, so there are no non-trivial endomorphisms of $1_{(1,0,\ldots)}$ in $\mathscr{E}(V_\infty(1,0,\ldots))$. Therefore there is only a 1-dimensional space of 2-morphisms $I\to I$.

In general, the identity $\bigwedge^k_q \C^{m|n}_q \to \bigwedge^k_q \C^{m|n}_q$ has a 1-dimensional space of 2-morphisms, since it is the space of 2-morphisms $1_{(k,0,\ldots)}\to 1_{(k,0,\ldots)}$ in $\mathscr{E}(V_\infty(k,0,\ldots))$.

However, note that not all ladders with label $1$ have 1-dimensional endomorphism spaces. Consider instead the identity $I:\C^{m|n}_q\otimes \C^{m|n}_q\to \C^{m|n}_q\otimes \C^{m|n}_q$. This is represented by $1_{(1,1,0,\ldots)}$ in $\dot U_q^{(m|n)}(\mathfrak{gl}(\infty))$. In this case there is a $(1,1,0,\ldots)$ weight-space in two representations, namely $V_\infty(2,0,\ldots)$ and $V_\infty(1,1,0,\ldots)$. Then the space of 2-morphisms $1_{(1,1,0,\ldots)}\to 1_{(1,1,0,\ldots)}$ in $\mathscr{E}(V_\infty(2,0,\ldots))$ is 4-dimen\-sional, since each strand can have at most one dot on it. However, the space of $2$-morphisms $1_{(1,1,0,\ldots)}\to 1_{(1,1,0,\ldots)}$ in $\mathscr{E}(V_\infty(1,1,0,\ldots))$ is again $1$-dimensional. So this time $\operatorname{End}(I)$ is $1$-dimensional if $(m,n)=(1,0)$, $4$-dimensional if $(m,n)=(0,1)$, and $5$-dimensional otherwise.

This demonstrates an important property of this categorification: the relations on foams are non-local. In the special case of $\mathcal{E}(V_\infty(n,n,\ldots,n,0,\ldots))$, it turns out every 1-labelled facet can be treated the same way: an $n$-dotted $1$-labelled facet is $0$. However, in a category like $\mathcal{E}(V_\infty(3,1,0,\ldots))$, it is clear that there are no dots allowed on the $1$-facet appearing in the identity foam of $1_{(3,1,0,\ldots)}$ since this is the highest weight space, but applying $F_1F_2$ to get to the $(2,1,1,\ldots)$ weight space, there is an endomorphism which has a dotted facet, since the cyclotomic relation gives $x_2^2e(12)=0$, see Figure \ref{fig:threeoneexample}. There is also an endomorphism corresponding to a $\tau$ foam, which maps $F_1F_21_{(3,1,0,\ldots)}$ to $F_2F_11_{(3,1,0,\ldots)}$. Therefore, these two $1$-labelled facets play different roles.

\begin{figure}[h]\caption{A dot on the other yellow facet results in the $0$ foam.}\label{fig:threeoneexample}
\raisebox{-50pt}{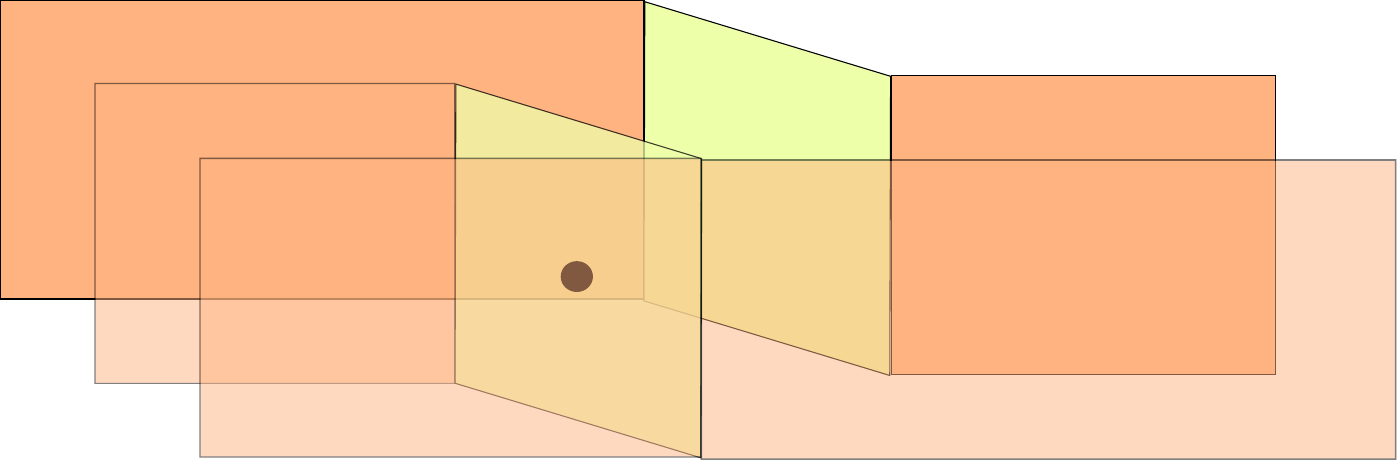}
\end{figure}

Another aspect of the non-locality is that there is a dependence on what has come above or below the ladder in question. For instance, we said there is a $5$-dimensional space of 2-morphisms $I:\C^{m|n}_q\otimes \C^{m|n}_q\to \C^{m|n}_q\otimes \C^{m|n}_q$, but if this map is preceded or followed by a map $\bigwedge^2_q \C^{m|n}_q\to \C^{m|n}_q\otimes \C^{m|n}_q$ or $\C^{m|n}_q\otimes \C^{m|n}_q \to \bigwedge^2_q \C^{m|n}_q$, then it is only $4$-dimensional, since we are then working solely in $\mathcal{E}(V_\infty(2,0,\ldots))$.

\section{Important special cases}\label{sec:specialcases}
\subsection{Relationship to Foams for \texorpdfstring{$\mathfrak{sl}(n)$}{sl(n)} homology}\label{sec:foamssln}
This section serves both as an example and as an explanation of differences from the usual definition of $\mathfrak{sl}(n)$ foams. We have $\operatorname{Rep}(\mathfrak{gl}(n))\cong \operatorname{Rep}(\mathfrak{gl}(n|0))$.

Thus we have
\begin{equation}\label{eqn:decomp}\operatorname{Rep}(\mathfrak{gl}(n))\cong \sum_{\mu\in H} \operatorname{End}_{fr}(V_\infty(\mu))\end{equation}
where $H$ is the set of dominant weights with $\mu_i\leq n$ for all $i$.

We can recover a description of $\operatorname{Rep}(\mathfrak{sl}(n))$ by converting $\mathfrak{gl}(n)$ weights to $\mathfrak{sl}(n)$ weights by the mapping $(\lambda_1,\ldots,\lambda_n)\mapsto (\lambda_1-\lambda_2,\lambda_2-\lambda_3,\ldots,\lambda_{n-1}-\lambda_n)$. The remarkable special case of $\operatorname{Rep}(\mathfrak{sl}(n))$ has the property that it is possible to describe closed MOY diagrams using only ladders that are oriented upwards. This is due to the isomorphism
\[ \left( \bigwedge_q^i \C^n_q\right)^* \cong \bigwedge_q^{n-i} \C^n_q \]
in $\operatorname{Rep}(\mathfrak{sl}(n))$, and also $\bigwedge^n_q \C^n_q \cong \C_q$. This means for instance, that the closed circle can be described as $EF1_{(n,0,0,\ldots)}$, or in ladder diagrams as

\[ \begin{tikzpicture}[baseline=-0.65ex]
\draw (-0.5,-1) -- (-0.5,1);
\draw (0.5,-0.2) -- (0.5,0.2);
\draw (-0.5,-0.6) -- (0.5,-0.2);
\draw (0.5,0.2) -- (-0.5,0.6);
\draw (-0.5,-1.25) node {$n$};
\draw (0.5,-1.25) node {};
\draw (-0.5,1.25) node {$n$};
\draw (1,1.25) node {};
\draw (0,-0.6) node {};
\draw (0,0.6) node {};
\draw (-1.1,0) node {$n-1$};
\draw (1,0) node {$1$};
\end{tikzpicture}. \]

It is then evident from the ladder axioms that this is equal to

\[ [n] \begin{tikzpicture}[baseline=-0.65ex]
\draw (-0.5,-1) -- (-0.5,1);
\draw (-0.5,1.25) node {$n$};
\draw (-0.5,-1.25) node {$n$};
\end{tikzpicture}. \]

This property was used by Cautis, Kamnitzer and Morrison \cite{Cautis2012} to deduce that all MOY relations are consequences of relations in $\dot{U}_q^\infty(\mathfrak{gl}(\infty))$ with the additional relation that a strand is $0$ if it has colour higher than $n$, corresponding to the decomposition in equation \ref{eqn:decomp}.

Using this, Lauda, Queffelec and Rose \cite{Lauda2012}, defined a category of foams $N\operatorname{Foam}_n$ by the relation that a foam is $0$ if it has a facet labelled $1$ with $n$ dots on it. 

In fact, this corresponds to our 2-category $\mathcal{E}(V_\infty(n,n,\ldots,n,0,\ldots))$ where the entries in the highest weight sum to $N$. The reason this 2-category is suitable for studying link homology is that the resolutions of any link written as ladder diagrams will have some number of uprights coloured $n$ at the bottom, and at the top. Hence the ladder represents an element of \[1_{(n,n,\ldots,n,0,\ldots)}\dot U_q^\infty(\mathfrak{gl}(\infty))1_{(n,n,\ldots,n,0,\ldots)}\cong 1_{(n,n,\ldots,n,0,\ldots)}\operatorname{End}_{fr}(V_{\infty}(n,n,\ldots,n,0,\ldots))1_{(n,n,\ldots,n,0,\ldots)}\]
since $V_\infty(n,n,\ldots,n,0,\ldots)$ is the only highest weight space with highest weight in $H$ containing a non-zero $(n,n,\ldots,n,0,\ldots)$ weight space. Thus if we want to think of diagrams that are local pictures of a link diagram, then it makes sense to restrict only to $\mathcal{E}(V_\infty(n,n,\ldots,n,0,\ldots))$, rather than considering the whole of $\mathscr{R}(\mathfrak{gl}(n|0))$.

The highest-weight space of $V_\infty(n,n,\ldots,n,0,\ldots)$ is 1-dimensional, so the space of its endomorphisms is also 1-dimensional, since an endomorphism of the highest-weight space is determined by where the element $1$ gets sent.

Similarly, the category $\operatorname{p-mod} R^{(n,n,\ldots,n,0,\ldots)}(0)$ is equivalent to the category $\operatorname{Vect}_\kk$ of vector spaces over the ground field $\kk$. Therefore a functor $G\in \mathcal{E}(V_\infty(n,n,\ldots,n,0,\ldots))$ on $\operatorname{p-mod} R^{(n,n,\ldots,n,0,\ldots)}(0)$ is determined by where $\kk$ is sent. In fact, it is possible to determine the image of $G$ using foams by the Yoneda lemma, since
\[ G\kk \cong \operatorname{Nat}(\operatorname{Hom}(\kk,-),G) \cong \operatorname{Nat}(\id_{\operatorname{Vect}_\kk},G)\cong \operatorname{Foam}(1_{(n,n,\ldots,n,0,\ldots)},G)\]
where $\operatorname{Nat}$ is the vector space of natural transformations, all isomorphisms are isomorphisms of vector spaces, and $\operatorname{Foam}(1_{(n,n,\ldots,n,0,\ldots)},G)$ is the space of foams from the ladder $1_{(n,n,\ldots,n,0,\ldots)}$ to the ladder representing the 1-morphism $G$. This shows that the tautological functor introduced by Bar-Natan \cite{Bar-Natan2004} is a natural categorification of the evaluation of a MOY diagram. In fact, the same isomorphism works if the ground ring $\kk$ is $\Z$, since then $\operatorname{p-mod}R^{(n,n,\ldots,n,0,\ldots)}(0)$ is the category of free abelian groups.

Note that restricting to $\mathcal{E}(V_\infty(n,n,\ldots,n,0,\ldots))$ has a number of advantages. All facets labelled $1$ can be thought of as having essentially the same `role', and we have the uniform rule that $n$ dots on a 1-labelled facet gives the $0$ foam. A similar rule applies to all higher coloured facets. This remarkable feature makes $\mathfrak{sl}(n)$ foams much nicer to work with, and means we can forget the ladder structure and work with a more flexible 2-category. Dot migration relations can be defined which allow all facets to be treated equally, rather than separated strictly into those that sit below uprights (orange) and those that sit below rungs (yellow). This is described by Queffelec and Rose \cite[Section 3.1]{Queffelec}.

\subsection{Symmetric powers of the standard representation of \texorpdfstring{$\mathfrak{sl}(n)$}{sl(n)}}\label{sec:symmetric}
The special case of $\mathfrak{gl}(n|0)$ is shown above to be particularly interesting. Another important case is $\mathfrak{gl}(0|n)$. In fact, we have $U_q(\mathfrak{gl}(n|0))\cong U_q(\mathfrak{gl}(0|n))$ by an isomorphism that takes $q\mapsto q^{-1}$.

The standard representation $\C^{0|n}_q$ consists of only odd-degree elements, so in fact we have $\bigwedge^i_q \C^{0|n}_q\cong S^i_q \C^n_q$ where $S^i_q$ is the $i$th symmetric power.

We have \[ \operatorname{Rep}(\mathfrak{gl}(0|n))\cong \sum_{\mu\in H} \operatorname{End}_{fr}(V_\infty(\mu))\]
where $H$ consists of dominant weights with at most $n$ parts by Lemma \ref{lem:decomp}. This gives a diagram calculus for the category $\operatorname{Sym}(\mathfrak{sl}(n))$, consisting of ladder diagrams satisfying an additional relation corresponding to the condition on $H$.

In the case of $n=2$, diagrams for symmetric powers of $\mathfrak{sl}(2)$ have been studied by Rose and Tubbenhauer \cite{Rose2015}. They use a slightly different set-up, where they mix the ladder diagrams with the Temperley-Lieb algebra. This means they can close MOY diagrams, and define a link polynomial corresponding symmetric powers of $\mathfrak{sl}(n)$. By closing the diagram, the extra relation disappears and is replaced by a relation that ties together ladder diagrams and the diagram from the Temperley-Lieb algebra, namely
\[\frac{1}{[2]}\begin{tikzpicture}[baseline=-0.65ex]
\draw (-0.5,-1) -- (-0.5,1);
\draw (0.5,-1) -- (0.5,-0.6);
\draw (0.5,0.6) -- (0.5,1);
\draw (-0.5,-0.2) -- (0.5,-0.6);
\draw (0.5,0.6) -- (-0.5,0.2);
\draw (-0.5,-1.25) node {$1$};
\draw (0.5,-1.25) node {$1$};
\draw (-0.5,1.25) node {$1$};
\draw (0.5,1.25) node {$1$};
\draw (0,-0.7) node {};
\draw (0,0.7) node {};
\draw (-0.75,0) node {};
\draw (0.75,0) node {};
\end{tikzpicture}=
\begin{tikzpicture}[baseline=-0.65ex]
\draw (-0.5,-1)-- (-0.5,1);
\draw (0.5,-1) -- (0.5,1);
\draw (-0.5,-1.25) node {$1$};
\draw (0.5,-1.25) node {$1$};
\draw (-0.5,1.25) node {$1$};
\draw (0.5,1.25) node {$1$};
\end{tikzpicture}
+\frac{1}{[2]} \begin{tikzpicture}[baseline=-0.65ex]
\draw (-0.5,-1) to [out=90,in=180] (0,-0.2) to [out=0,in=90] (0.5,-1);
\draw (-0.5,1) to [out=270,in=180] (0,0.2) to [out=0,in=270] (0.5,1);
\draw (-0.5,-1.25) node {$1$};
\draw (0.5,-1.25) node {$1$};
\draw (-0.5,1.25) node {$1$};
\draw (0.5,1.25) node {$1$};
\end{tikzpicture}.
 \]

Note that this relation identifies the Jones-Wenzl projector $\operatorname{JW}_2$ (on the right-hand side) in the Temperley-Lieb algebra with the idempotent in $\dot U_q^\infty(\mathfrak{gl}(\infty))$ projecting to $\operatorname{End}_{fr}(V_\infty(2,0,0,\ldots))$. In general, this means the $k$th Jones-Wenzl projector $\operatorname{JW}_k$ projecting to $S^k_q\C^n_q$ inside $\bigotimes_{i=1}^k \C^n_q$ is identified with the projection to $\operatorname{End}_{fr}(V_\infty(k,0,0,\ldots))$ in $\dot U^\infty_q(\mathfrak{gl}(\infty))$.

Similarly, one can mix exterior and symmetric powers of $\mathfrak{sl}(n)$ in a single graphical calculus. The category which mixes exterior powers and symmetric powers seems a lot easier to describe completely, involving only a single `dumbbell' relation that relates the two types of representation. This was proved by Tubbenhauer, Vaz and Wedrich \cite{Tubbenhauer2015}.

The connection between knot homology for exterior powers and for symmetric powers is particularly interesting because of a conjectural relationship at the level of HOMFLY homology due to Gukov and Sto\v{s}i\'{c} \cite{Gukov2011}:
\begin{conjecture}[Gukov, Sto\v{s}i\'{c}] For a knot $K$, there is an isomorphism between anti-symmetric coloured HOMFLY homology and symmetric coloured HOMFLY homology
\[H^{\Lambda^r}_{i,j,*}(K)\cong H^{S^r}_{i,-j,*}(K).\]
\end{conjecture}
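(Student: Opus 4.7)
The plan is to approach this conjecture by exploiting the isomorphism $U_q(\mathfrak{gl}(n|0))\cong U_q(\mathfrak{gl}(0|n))$ mentioned just before Section~\ref{sec:symmetric}, which sends $q\mapsto q^{-1}$, and interpreting antisymmetric and symmetric coloured link homology as limits of invariants built from $\operatorname{Rep}(\mathfrak{gl}(n|0))$ and $\operatorname{Rep}(\mathfrak{gl}(0|n))$ respectively. The identification $\bigwedge_q^i \C_q^{0|n}\cong S_q^i \C_q^n$ together with the $q\leftrightarrow q^{-1}$ isomorphism should give a corresponding equivalence of our diagrammatic categories $\dot U_q^{(n|0)}(\mathfrak{gl}(\infty))\cong \dot U_{q^{-1}}^{(0|n)}(\mathfrak{gl}(\infty))$, matching $\bigwedge^{k_1}\otimes\cdots\otimes\bigwedge^{k_p}$ on one side with $S^{k_1}\otimes\cdots\otimes S^{k_p}$ on the other. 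The first step is to promote this to an equivalence $\mathscr R(\mathfrak{gl}(n|0))\cong \mathscr R(\mathfrak{gl}(0|n))$ at the categorified level, comparing the cyclotomic foam relations; since the indexing set $H$ of dominant weights $\mu$ is invariant under transposition of Young diagrams (the transpose sends $\{\mu:\mu_1\leq n\}$ bijectively to $\{\mu:\mu_{n+1}\leq 0\}$), and since transposition exchanges $V_\infty(\mu)$ and $V_\infty(\mu^t)$ in the skew Howe decomposition, one should be able to build such an equivalence summand by summand via $\mathcal E(V_\infty(\mu))\simeq \mathcal E(V_\infty(\mu^t))$.

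Next I would track what this equivalence does to the categorified braiding of Section~\ref{sec:catBraiding}. The inversion $q\mapsto q^{-1}$ should conjugate a positive crossing complex $\mathcal T_i1_\lambda$ into (a suitably shifted version of) $\mathcal T_i^{-1}1_\lambda$, reflecting the mirror symmetry that interchanges $j$ and $-j$ in the conjectural isomorphism. Applied to a closed braid presentation of $K$, this would give a chain-level identification of the antisymmetric and symmetric coloured complexes up to regrading by $j\mapsto -j$. To make this into an actual statement about link homology (rather than about $\operatorname{Hom}$-spaces between ladder endpoints) one needs to handle the missing duals in $\operatorname{Rep}(\mathfrak{gl}(m|n))$; here I would appeal to the doubled Schur algebra of Queffelec and Sartori, where the duality is built in, and verify that the equivalence above extends compatibly to the doubled setting.

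Finally, to pass from $\mathfrak{gl}(n)$-coloured invariants to HOMFLY homology, I would use the stabilisation procedure: HOMFLY homology is the limit $n\to\infty$ of $\mathfrak{sl}(n)$ homologies with appropriate variable identifications, and both the exterior and symmetric $\mathfrak{gl}(n)$ categorifications should stabilise to a common object acted upon by $\dot{\mathscr U}_Q^\infty(\mathfrak{gl}(\infty))$ (which is already formulated as a limit in Section~\ref{sec:glinfinity}). The equivalence constructed in the first two paragraphs should be compatible with these stabilisations, yielding the desired isomorphism $H^{\Lambda^r}_{i,j,*}(K)\cong H^{S^r}_{i,-j,*}(K)$.

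The main obstacle, I expect, is the final step: the categorifications here control morphism spaces between ladder diagrams with fixed endpoints, but HOMFLY homology is a closed invariant, and making the $q\mapsto q^{-1}$ symmetry manifest on the closed-diagram level requires genuinely enlarging the formalism to accommodate duals, cups, caps, and the stabilisation limit in a way that preserves the cyclotomic foam relations. In particular, the non-local nature of the relations discussed in Section~\ref{sec:non-local} makes the interaction with Markov moves (necessary to define a closed invariant from a braid closure) subtle, and carrying the $\Lambda^r\leftrightarrow S^r$ equivalence through Markov stabilisation is where I would expect the argument to require the most new input.
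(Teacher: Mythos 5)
The statement you are attempting to prove is not a theorem of this paper: it is recorded verbatim as a \emph{conjecture} of Gukov and Sto\v{s}i\'{c}, and the text immediately following it states explicitly that ``the homology theory $H^{S^r}_{i,j,k}(K)$ is not yet formally defined.'' There is therefore no proof in the paper for your proposal to be compared against, and no proposal could succeed as stated, since one cannot prove an isomorphism to an object that has not been constructed. A reasonable reframing of your strategy would be as a \emph{programme} for constructing $H^{S^r}$ from $\mathscr{R}(\mathfrak{gl}(0|n))$ and then matching it against the already-defined $H^{\Lambda^r}$, but that is a construction, not a proof of the conjectural isomorphism.

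Beyond this framing issue, there is a concrete error in your first step. You propose to build an equivalence $\mathscr{R}(\mathfrak{gl}(n|0))\cong \mathscr{R}(\mathfrak{gl}(0|n))$ summand by summand via $\mathcal{E}(V_\infty(\mu))\simeq\mathcal{E}(V_\infty(\mu^t))$, justified by the observation that transposition interchanges the indexing sets $H$. But by Theorem~\ref{thm:catEnd}, $K_0(\mathcal{E}(V_\infty(\mu)))\otimes\C(q)\cong\operatorname{End}_{fr}(V_\infty(\mu))$, and these algebras are \emph{not} isomorphic for $\mu$ and $\mu^t$: the modules $V_\infty(\mu)$ and $V_\infty(\mu^t)$ have different weight-space dimensions (the Kostka numbers $K_{\mu\nu}$ and $K_{\mu^t\nu}$ differ in general), so the endomorphism algebras already have different Hilbert series. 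A small example: taking $\mu=(2,2,2)$, $1_{(2,2,2)}$ is $1$-dimensional in $\operatorname{End}_{fr}(V_\infty(2,2,2))$ but considerably larger in $\operatorname{End}_{fr}(V_\infty(3,3))=\operatorname{End}_{fr}(V_\infty(\mu^t))$. The equivalence of categories induced by $U_q(\mathfrak{gl}(n|0))\cong U_q(\mathfrak{gl}(0|n))$ is real, but the matching of summands in the skew-Howe decomposition is \emph{not} $\mu\leftrightarrow\mu^t$; it is the identity on $\mu$ together with a change of which $\mu$ are allowed (indeed $\operatorname{Rep}(\mathfrak{gl}(n|0))$ and $\operatorname{Rep}(\mathfrak{gl}(0|n))$ are not equivalent as monoidal categories with the same generating object). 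So this step of the argument does not go through.

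The remaining two steps are sketches of a plausible strategy rather than proofs, and you have yourself flagged the substantive obstacles: the absence of duals in $\operatorname{Rep}(\mathfrak{gl}(m|n))$, the interaction of the non-local cyclotomic relations with Markov moves, and the stabilisation to HOMFLY homology. Each of these would require a genuine construction that the paper does not provide and that you do not supply. In short: your proposal correctly identifies what a proof would need to look like and where the hard work lies, but it contains a false intermediate claim ($\mathcal{E}(V_\infty(\mu))\simeq\mathcal{E}(V_\infty(\mu^t))$), and it does not (and cannot, in the current state of the art) close the gaps it identifies, beginning with the fact that the right-hand side of the conjectured isomorphism is not yet a defined object.
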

This conjecture is based on computational evidence, although the homology theory $H^{S^r}_{i,j,k}(K)$ is not yet formally defined.
\subsection{Non-local relations in Heegaard Floer homology}\label{sec:heegaardfloer}
Part of our interest in categorifying $\operatorname{Rep}(\mathfrak{gl}(m|n))$ was to understand the case $m=n=1$. The knot polynomial associated to the standard representation of $U_q(\mathfrak{gl}(1|1))$ is the Alexander polynomial (see \cite{Sartori2013}, or \cite{Grant2014}). There are several known link homologies with graded Euler characteristic equal to the Alexander polynomial, such as Heegaard Floer knot homology and Instanton Floer knot homology, but one arising purely from categorified representation theory has not yet been found. Heegaard Floer knot homology instead is usually defined in terms of analytic geometry.

Gilmore \cite{Gilmore2010} gave a description of Heegaard Floer knot homology that more closely resembles some constructions of Khovanov-Rozansky $\mathfrak{sl}(n)$ homology, in the sense that it was based on Ozsv\'{a}th and Szab\'{o}'s cube of resolutions description of Heegaard Floer homology \cite{Ozsvath2009a}, and uses MOY diagrams to prove invariance of the homology under Reidemeister moves. It is interesting that in Gilmore's algebraic setting, it is also necessary to make use of non-local relations. 

We hope that is is possible to use our categorification $\mathscr{R}(\mathfrak{gl}(1|1))$ to define a link homology theory categorifying the Alexander polynomial. It would be especially interesting if this theory was related to Gilmore's construction of Heegaard Floer knot homology.

\printbibliography
\end{document}